\documentclass[11pt,twoside, a4paper,reqno]{amsart}
\pdfoutput=1

\usepackage[shorthands=off,english]{babel}
\usepackage[T1]{fontenc}
\usepackage[utf8]{inputenc}
\usepackage{amsmath,amsfonts,amssymb}
\usepackage{graphicx}
\usepackage{stmaryrd}
\usepackage{float}
\usepackage{enumitem}
\usepackage{caption}
\usepackage{tikz}
\usetikzlibrary{patterns}
\usetikzlibrary{decorations.markings} 
\usetikzlibrary{decorations.pathreplacing}
\usepackage{tikz-cd}
\usepackage[all]{xy}
\usepackage[]{color}
\usepackage{xcolor}
\usepackage{textcomp}
\usepackage{latexsym}
\usepackage{xspace}
\usepackage{mathrsfs}
\usepackage[babel=false]{csquotes}
\MakeOuterQuote{"}
\usepackage{chngcntr}
\usepackage{etoolbox}
\DeclareMathAlphabet{\mathcal}{OMS}{cmsy}{m}{n}
\usepackage{longtable}
\definecolor{mymauve}{rgb}{0.4,0,1}
\usepackage[hidelinks,hypertexnames=false]{hyperref}
\hypersetup{
  colorlinks   = true,  
  urlcolor     = blue,  
  linkcolor    = blue,  
  citecolor   = mymauve 
}
\usepackage{amsthm}
\usepackage[english]{cleveref-forward}

\usepackage[margin=1in]{geometry}
\setlength{\textheight}{24cm}
\setlength{\headheight}{13.6pt} 

\theoremstyle{plain}
\newtheorem{theorem}{Theorem}[section]{}{}
\newtheorem*{theorem*}{Theorem}
\newtheorem{maintheorem}{Theorem}

\newtheorem{lemma}[theorem]{Lemma}
\newtheorem{corollary}[theorem]{Corollary}
\newtheorem{proposition}[theorem]{Proposition}

\theoremstyle{definition}

\newtheorem{definition}[theorem]{Definition}
\theoremstyle{remark}
\newtheorem{remark}[theorem]{Remark}
\newtheoremstyle{TheoremNum}
    {8pt}{8pt}                            
    {\itshape}                            
    {}                                    
    {\bfseries}                           
    {.}                                   
    { 5pt plus 1pt minus 1pt }            
    {\thmname{#1}\thmnote{ \bfseries #3}} 
\theoremstyle{TheoremNum}

\makeatletter
\def\cref@thmoptarg[#1]#2#3#4{%
    \ifhmode\unskip\unskip\par\fi%
    \normalfont%
    \trivlist%
    \let\thmheadnl\relax%
    \let\thm@swap\@gobble%
    \thm@notefont{\fontseries\mddefault\upshape}%
    \thm@headpunct{.}
    \thm@headsep 5\p@ plus\p@ minus\p@\relax%
    \thm@space@setup%
    #2
    \@topsep \thm@preskip               
    \@topsepadd \thm@postskip           
    \def\@tempa{#3}\ifx\@empty\@tempa%
      \def\@tempa{\@oparg{\@begintheorem{#4}{}}[]}%
    \else%
      \refstepcounter[#1]{#3}
      \@namedef{cref@#3@alias}{#1}
      \def\@tempa{\@oparg{\@begintheorem{#4}{\csname the#3\endcsname}}[]}%
    \fi%
    \@tempa}%
\makeatother

\crefname{corollary}{Corollary}{Corollaries}
\crefname{conjecture}{Conjecture}{Conjectures}
\crefname{theorem}{Theorem}{Theorems}
\crefname{maintheorem}{Theorem}{Theorems}
\crefname{proposition}{Proposition}{Propositions}
\crefname{lemma}{Lemma}{Lemmas}
\crefname{example}{Example}{Examples}
\crefname{definition}{Definition}{Definitions}
\crefname{remark}{Remark}{Remarks}
\crefname{figure}{Figure}{Figures}
\crefname{table}{Table}{Tables}
\crefname{section}{Section}{Sections}
\crefname{appendix}{Appendix}{Appendices}

\def\co{\colon}
\newcommand{\tq}{\mathrel{{\ensuremath{\: : \: }}}}
\def\wt{\widetilde}
\let\emptyset\varnothing
\def\actson{\curvearrowright}
\def\rightAction{\curvearrowleft}
\def\groupIso{\simeq}
\def\llangle{\langle\!\langle}
\def\rrangle{\rangle\!\rangle}
\newcommand\BigFreeProd{\displaystyle\mathop{\mbox{\huge{$\ast$}}}}
\def\unsplitExtension{\cdot}

\newcommand{\Aut}{\mathrm{Aut}}
\newcommand{\centralizer}{{\mathbf{C}}}
\newcommand{\centerOfGroup}{{\mathbf{Z}}}

\def\GL{\mathrm{GL}}
\def\PSL{\mathrm{PSL}}
\def\SL{\mathrm{SL}}
\def\Sz{\mathrm{Sz}}
\def\lieGroup{\mathbb{G}}
\def\U{\mathbf{U}}

\newcommand{\rk}{\mathrm{rk}}
\newcommand{\diag}{\mathrm{diag}}
\def\sg{\mathrm{sg}}
\def\ii{\mathbf{i}}
\def\Res{\mathrm{Res}}
\def\Ind{\mathrm{Ind}}
\def\Ad{\mathrm{Ad}}

\def\C{\mathbb{C}}
\def\F{\mathbb{F}}
\def\N{\mathbb{N}}
\def\R{\mathbb{R}}
\def\Z{\mathbb{Z}}

\def\FF{\mathcal{F}}
\def\K{\mathcal{K}}

\def\SLV{\mathcal{SLV}}
\def\XOS{X_1^{OS}}
\def\XOSk{X_1^{OS+k}}
\def\oneComplex{X_1}
\def\inclusionBrown{i}
\def\inclusionStabilizerVertex{i}

\def\MM{\mathcal{M}}
\def\MMbar{\overline{\mathcal{M}}}
\def\MMbarbar{\overline{\overline{\mathcal{M}}}}
\def\HH{\mathcal{H}}

\def\rhobar{\overline{\rho}}
\def\rhoG{{\rho_0}}

\def\Wbar{\overline{W}}
\def\XX{\mathbf{X}}
\def\WW{\mathbf{W}}

\def\WWbar{\overline{\mathbf{W}}}
\def\YYbar{\overline{\mathbf{Y}}}

\def\identityLieGroup{\mathbf{1}}
\def\identityMM{\identityLieGroup}
\def\identityMMBar{\overline{\identityMM}}
\def\taubar{{\overline{\tau}}}

\def\onlyEdgeInEMinusT{{\tilde{\eta}}}


\pagestyle{myheadings} \markboth{{\sc i. sadofschi costa}}{{\sc Group actions on contractible $2$-complexes I.}}

\title{Group actions on contractible $2$-complexes I}
\author[I. Sadofschi Costa]{Iv\'an Sadofschi Costa}

\address{Departamento  de Matem\'atica - IMAS\\
 FCEyN, Universidad de Buenos Aires. Buenos Aires, Argentina.}
\email{isadofschi@dm.uba.ar}

\subjclass[2010]{
  57S17, 
  57M20, 
  57M60, 
  55M20, 
  55M25, 
  20F05, 
  20E06, 
  20D05
}

\keywords{Group actions, contractible $2$-complexes, moduli of group representations, mapping degree, finite simple groups}

\thanks{Researcher of CONICET. The author was partially supported by grants PICT-2017-2806, PIP 11220170100357CO and UBACyT 20020160100081BA}


\begin{document}
  \begin{abstract}
  In this series of two articles, we prove that every action of a finite group $G$ on a finite and contractible $2$-complex has a fixed point.
  The proof goes by constructing a nontrivial representation of the fundamental group of each of the acyclic $2$-dimensional $G$-complexes constructed by Oliver and Segev.
  In the first part we develop the necessary theory and cover the cases where $G=\PSL_2(2^n)$, $G=\PSL_2(q)$ with $q\equiv 3\pmod 8$  or $G=\Sz(2^n)$.
  The cases $G=\PSL_2(q)$ with $q\equiv 5\pmod 8$ are addressed in the second part.
  \end{abstract}

\maketitle

\setcounter{tocdepth}{1}

\vspace{-0.15in}

\tableofcontents

\vspace{-0.35in}

\section{Introduction}
A well known result of Jean-Pierre Serre states that an action of a finite group on a tree has a fixed point~\cite{SerreTrees}.
A natural attempt to generalize Serre's result would be to replace ``tree'' by ``contractible $n$-complex''.
An example by Edwin E. Floyd and Roger W. Richardson~\cite{FloydRichardson} implies this generalization does not hold for $n\geq 3$.
However, Carles Casacuberta and Warren Dicks conjectured that it holds for $n=2$~\cite{CD}.
In the compact case and in the form of a question, this was also posed by Michael Aschbacher and Yoav Segev~\cite[Question 3]{AschbacherSegev}.
In this series of two articles, we give a positive answer to the question of Aschbacher--Segev, settling the compact case of the Casacuberta--Dicks conjecture.

\begin{maintheorem}\label{CasacubertaDicks}
 Every action of a finite group $G$ on a $2$-dimensional finite and contractible complex has a fixed point.
 Moreover, if $G$ is a finite group and $X$ is a $2$-dimensional, fixed point free, finite and acyclic $G$-complex, then the fundamental group of $X$ admits a nontrivial unitary representation.
\end{maintheorem}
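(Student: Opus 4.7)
The plan is to establish the second (stronger) assertion, from which the first follows immediately by contradiction: a finite contractible $2$-complex has trivial fundamental group, which admits no nontrivial unitary representation, so no fixed-point-free action of a finite group on such a complex can exist. It therefore suffices, for every finite group $G$ and every $2$-dimensional fixed-point-free finite acyclic $G$-complex $X$, to produce a nontrivial unitary representation of $\pi_1(X)$.

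The first concrete step is to invoke the Oliver--Segev classification: after replacing $G$ by its image in $\Aut(X)$, the group $G$ must lie in one of the families $\PSL_2(2^n)$, $\PSL_2(q)$ with $q\equiv 3,5 \pmod 8$, or $\Sz(2^n)$, and the action is equivariantly modelled by a combinatorial $2$-complex with known vertex and edge stabilizers along a fundamental domain. From that model I would extract a presentation of $\pi_1(X)$ by means of the orbit $2$-complex together with a complex-of-groups / van Kampen argument: generators correspond to edges in a spanning tree of the quotient graph, and relations come from the $2$-cell orbits together with the vertex stabilizer data. In this first article the families $\PSL_2(2^n)$, $\PSL_2(q)$ with $q\equiv 3\pmod 8$, and $\Sz(2^n)$ are treated; the case $q\equiv 5\pmod 8$ is deferred to Part II.

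To exhibit the required representation I would study the moduli space $\MM$ of homomorphisms $\rho\colon \pi_1(X)\to \G$ into a carefully chosen compact real Lie group $\G$ (for instance $\SO(3)$ or a related classical compact group), which always contains the trivial representation $\identityMM$. The strategy is to deform $\identityMM$ inside $\MM$ by a mapping-degree argument: set up an auxiliary map from a finite-dimensional parameter space into a product $\G^k$ whose coordinates realize the images of the chosen generators, cut out the defining relations of $\pi_1(X)$, and show via a degree computation that the resulting locus is strictly larger than $\{\identityMM\}$ near the trivial point. Any such nontrivial $\rho$ then gives the desired nontrivial unitary representation.

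The main obstacle is precisely this mapping-degree computation. Each family of simple groups forces its own choice of target Lie group, its own local model for $\MM$ near $\identityMM$, and its own combinatorial bookkeeping of how the relations dictated by the Oliver--Segev model interact with the Lie-theoretic structure of $\G$; this is the reason the proof must be split family by family and across two articles. The $q\equiv 5\pmod 8$ case, where the vertex stabilizers render $\MM$ especially rigid near $\identityMM$, is expected to require the most delicate degree argument, and for that reason is postponed to Part II.
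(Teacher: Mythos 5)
Your proposal follows essentially the same route as the paper: reduce to the second assertion, invoke the Oliver--Segev classification and Brown's presentation of $\pi_1(X)$ inside the extension $\Gamma\to G$, build a moduli of representations into a compact Lie group from a single representation of $G$, and produce a nontrivial point by combining a degree-zero computation for the word maps with the regularity of the universal point. The only cosmetic differences are that the paper's target is a unitary group $\U(m)$ (with $\rhoG$ chosen to restrict irreducibly to the Borel subgroup, which is what makes the universal representation correspond to a unique point of $\MMbar$), and that the degree argument yields a second solution globally rather than ``near the trivial point.''
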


In~\cite{CD} the conjecture is proved for solvable groups.
The question of which groups act without fixed points on a finite acyclic $2$-complex was studied independently by Segev~\cite{Segev}, who proved this is not possible for the solvable groups and the alternating groups $A_n$ for $n\geq 6$.
Using the classification of the finite simple groups, Aschbacher and Segev proved that for many groups any action on a finite $2$-dimensional acyclic complex has a fixed point~\cite{AschbacherSegev}.

Then, Bob Oliver and Yoav Segev~\cite{OS} gave the complete classification of the groups that act without fixed points on an acyclic $2$-complex.
A concise introduction to this subject was given by Alejandro Adem at the S\'eminaire Bourbaki~\cite{Adem}.
The main results in  Oliver and Segev's classification are the following (see \cref{DefEssential} for the definition of an essential $G$-complex).

\begin{theorem}[{Oliver--Segev}]\label{OSThmA}
 For any finite group $G$, there is an essential fixed point free $2$-dimensional (finite) acyclic $G$-complex if and only if $G$ is isomorphic to one of the simple groups $\PSL_2(2^k)$ for $k\geq 2$, $\PSL_2(q)$ for $q\equiv \pm 3 \pmod 8$ and $q\geq 5$, or $\Sz(2^k)$ for odd $k\geq 3$. Furthermore, the isotropy subgroups of any such $G$-complex are all solvable.
\end{theorem}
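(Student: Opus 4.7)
The plan is to split the proof into two directions: necessity (that any such $G$ must be one of the listed groups and that isotropies are solvable) and sufficiency (explicit construction of such $G$-complexes for each listed family).

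For necessity, I would first reduce to the case where $G$ is simple. If $N \triangleleft G$ is a proper nontrivial normal subgroup acting on an essential fixed-point-free acyclic $G$-complex $X$, then by Smith theory the fixed point set $X^N$ is $\F_p$-acyclic (for suitable $p$), at most $2$-dimensional, and carries a $G/N$-action. Iterating this down the composition series (with extra care since one only gets mod-$p$ acyclicity in general) reduces the problem to simple groups. At this point I would invoke the classification of finite simple groups and rule out all families except the three listed. The main tools for each candidate family are: the equivariant Euler-characteristic identity $1 = \chi(X) = \sum_{\sigma} (-1)^{\dim \sigma}/|G_{\sigma}|$ summed over orbits of cells, fixed-point analysis of elementary abelian $p$-subgroups, and Quillen/Brown-style analysis of the poset of $p$-subgroups to constrain the possible isotropy data. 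Combined with the structural knowledge of each simple group, these tools yield contradictions for alternating groups $A_n$ with $n \ge 6$, for all sporadic groups, and for all Lie-type groups of rank $\geq 2$ or in characteristic different from $2$ (excepting $\PSL_2(q)$ with $q \equiv \pm 3 \pmod 8$).

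For the solvability claim, once $G$ is pinned down to one of the three families, I would use the well-known description of the maximal subgroups of $\PSL_2(q)$ and $\Sz(q)$ to check that the subgroups which can arise as cell stabilizers, namely Borel subgroups, normalizers of maximal tori, and dihedral or cyclic subgroups, are all solvable.

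For sufficiency, I would construct each complex explicitly. Starting from a bipartite $G$-graph $Y$ built from the coset action of $G$ on two solvable maximal subgroups (e.g., a Borel $B$ and a chosen second subgroup $M$, with solvable intersection $B \cap M$ as edge stabilizer), one obtains a $1$-complex whose quotient is a single edge and whose stabilizers are solvable. Equivariantly attaching $2$-cells to kill $H_1(Y;\Z)$ as a $\Z G$-module yields a $2$-dimensional acyclic $G$-complex with solvable isotropies; essentialness and absence of a global fixed point are built in by the choice of the two subgroups. The main obstacle is the case-by-case elimination using the classification of finite simple groups: each higher-rank Lie-type family requires delicate structural analysis via its parabolic subgroups and their intersections, whereas the constructions for the surviving families, though nontrivial, are comparatively transparent.
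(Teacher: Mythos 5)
First, a point of context: the paper does not prove this statement at all --- it is quoted verbatim from Oliver and Segev's article \cite{OS} (Theorem A there), whose proof occupies a long paper of its own and depends on the classification of finite simple groups. So there is no internal proof to compare against; what you have written is a top-level outline of the architecture of the Oliver--Segev argument, and at that level of resolution it names the right tools (CFSG, the equivariant Euler characteristic identity, Smith theory, subgroup structure of $\PSL_2(q)$ and $\Sz(q)$, explicit constructions from coset graphs). But as a proof it has genuine gaps, not merely omitted routine detail.

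The most serious gap is the reduction to simple groups. Smith theory gives $\F_p$-acyclicity of $X^N$ only when $N$ is a $p$-group, and once you pass to mod-$p$ acyclic fixed sets you have changed the problem: the classification of groups acting fixed-point-freely on $\F_p$-acyclic $2$-complexes is a different (and in places harder) question, and integral acyclicity is not recovered by descending a composition series. The actual reduction in \cite{OS} is organized around the definition of \emph{essential} (no nontrivial normal subgroup $N$ such that $X^{HN}\hookrightarrow X^H$ is a homology equivalence for all $H$), which your proposal does not engage with; indeed the word ``essential'' in the statement is doing real work that your sketch treats as automatic. On the sufficiency side, your proposed bipartite graph on two maximal subgroups does not match what is actually needed: the constructions recorded in \cref{example3.4,example3.5,example3.7} require three or four orbits of vertices and three or four orbits of edges, chosen so that $H_1$ of the $1$-skeleton is a free $\Z G$-module of rank one (so that a single free orbit of $2$-cells kills it); with only two vertex orbits the Euler characteristic identity $\sum_\sigma(-1)^{\dim\sigma}/|G_\sigma|=1/|G|$ generally cannot be satisfied, and ``attaching $2$-cells to kill $H_1$ as a $\Z G$-module'' is only possible equivariantly and freely when that module is free of the correct rank. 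Finally, the claim that essentialness ``is built in by the choice of the two subgroups'' is unsubstantiated; verifying it requires computing the fixed-point sets $X^H$ for all subgroups $H$, which is where \cite[Theorem 4.1]{OS} (each $X^H$ a tree for $H$ solvable) enters. In short: the outline is directionally reasonable but does not constitute a proof, and the two steps just described would fail as written.
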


\begin{theorem}[{Oliver--Segev}]\label{OSThmB}
Let $G$ be any finite group, and let $X$ be any $2$-dimensional acyclic $G$-complex. Let $N$ be the subgroup generated by all normal subgroups $N'\triangleleft G$ such that $X^{N'}\neq \emptyset$. Then $X^N$ is acyclic; $X$ is essential if and only if $N=1$; and the action of $G/N$ on $X^N$ is essential.
\end{theorem}

In an unpublished preprint~\cite{A5contractible}, the author proved the $G=A_5\groupIso \PSL_2(2^2)$ case of \cref{CasacubertaDicks} and proposed a path to prove \cref{CasacubertaDicks},
which consists of representing (in a nontrivial way) the fundamental group of each of the acyclic $2$-complexes constructed by Oliver and Segev.
Since this reduction is needed to prove \cref{CasacubertaDicks} and the preprint~\cite{A5contractible} will remain unpublished, we reproduce the argument in \cref{SectionReduction}.
Combining the Oliver--Segev classification with the Gerstenhaber--Rothaus theorem, we deduce \cref{CasacubertaDicks} from \cref{thmB,thmC} below.

\begin{maintheorem}\label{thmB}
 Let $G$ be one of the groups $\PSL_2(2^n)$ for $n\geq 2$, $\PSL_2(3^n)$ for $n\geq 3$ odd, $\PSL_2(q)$ with $q\equiv 11\pmod {24}$ or $q\equiv 19 \pmod{24}$, or $\Sz(q)$ for $q=2^n$ with $n\geq 3$ odd.
 Then the fundamental group of every $2$-dimensional, fixed point free, finite and acyclic $G$-complex admits a nontrivial representation in a unitary group $\U(m)$.
\end{maintheorem}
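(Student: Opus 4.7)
The plan is to exploit the Oliver--Segev structural description of the acyclic $G$-complexes together with a moduli-space argument. By \cref{OSThmA}, for each $G$ listed in the statement we have an explicit class of candidate isotropy subgroups, all solvable, and the complex $X$ is, up to $G$-equivariant homotopy equivalence, built from a diagram of such stabilizers. The first step is to extract from this description a finite presentation of $\pi_1(X)$ via Brown's formula for fundamental groups of group actions on $2$-complexes, in which both generators and relators are parametrized by the orbits of cells and carry information about the vertex and edge stabilizers. The acyclicity of $X$ forces a strong bound on the deficiency of this presentation, which feeds into the next step.

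With such a presentation $\pi_1(X) = \langle x_1,\dots,x_k \mid r_1,\dots,r_\ell\rangle$ in hand, the idea is to look for nontrivial unitary representations inside the representation variety $\mathrm{Hom}(\pi_1(X), \lieGroup)$, for a well-chosen compact Lie group $\lieGroup$ (either $\U(m)$ itself for small $m$ or a close relative such as $\mathrm{SU}(2)$ or $\SO(3)$). One fixes a base representation $\rhoG$, typically the trivial one or one factoring through $G$, and studies the variety near $\rhoG$. A tangent space and obstruction computation at $\rhoG$, combined with the deficiency bound coming from acyclicity, should in most cases produce a positive-dimensional component of the representation variety through $\rhoG$; any representation on such a component other than $\rhoG$ then gives the desired nontrivial representation.

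When the trivial representation turns out to be isolated, the fallback is a mapping-degree argument in the spirit of the $A_5$ case in \cite{A5contractible}. One introduces two moduli spaces $\MM$ and $\MMbar$ built from the isotropy data: $\MM$ parametrizes tuples of representations of the vertex stabilizers of $X$ and $\MMbar$ parametrizes their images in the product of representation varieties of the edge stabilizers, with a natural map $\MM \to \MMbar$. Comparing the local degree of this map at the trivial point $\identityMM$ with its global degree produces a preimage of $\identityMMBar$ in $\MM$ different from $\identityMM$, which translates to a nontrivial unitary representation of $\pi_1(X)$.

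The main obstacle is the mapping-degree step, since it depends on detailed information about the irreducible unitary representations of the solvable isotropy subgroups of $G$ and on arithmetic facts about $q$. The Suzuki case $G = \Sz(2^n)$ is expected to be technically the most delicate, because of the structure of its Frobenius subgroups and its self-normalizing subgroups of orders $q \pm \sqrt{2q} + 1$. The hypotheses $q \equiv 11, 19 \pmod{24}$ (together with $q = 3^n$ for odd $n\geq 3$) are precisely the range where the required degree comparison comes out in the right direction; the cases $q \equiv 5 \pmod 8$ are deferred to Part II, where the argument needs a different target Lie group.
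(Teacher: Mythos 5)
Your proposal correctly identifies the overall framework of the paper (Oliver--Segev structure, Brown's presentation, a moduli of representations, and a mapping-degree argument in the spirit of \cite{A5contractible}), but as written it has a genuine gap: the step you present as the main route --- a tangent-space/obstruction computation at a base representation $\rhoG$ hoping for a positive-dimensional component of $\hom(\pi_1(X),\lieGroup)$ --- cannot work. Since $X$ is acyclic, $\pi_1(X)$ is perfect, so $H^1(\pi_1(X);V)=0$ for any trivial coefficient module and the trivial representation is infinitesimally rigid; more to the point, in the paper's moduli the universal point $\identityMMBar$ is a \emph{regular} point of $\WWbar$ between manifolds of equal dimension (\cref{propositionDimensionMM0}, \cref{universalRepIsRegularPoint}), hence an isolated solution. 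So the ``fallback'' is in fact the only argument, and it is precisely there that your proposal stops short of a proof. Concretely, three ingredients are missing. First, the choice of $\rhoG$: one needs, for each $G$, a specific irreducible unitary representation whose restriction to the Borel subgroup $G_{v_0}$ is still irreducible; this is what makes $\identityMMBar$ the \emph{unique} universal point (\cref{universalIsUnique}) and is exactly the property separating \cref{thmB} from \cref{thmC} --- not a sign in a degree comparison, and not a change of target Lie group in Part II. Second, regularity of $\identityMMBar$ must be derived from acyclicity via the surjectivity of the cellular codifferential (\cref{linearCombinationFixedPoints} feeding into \cref{lemmaRegularPointXX}); ``a strong bound on the deficiency'' does not by itself give this. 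Third, and hardest, one must actually prove $\deg(\WWbar)=0$; the paper does this by an $\HH$-equivariant homotopy of the map induced by the non-tree edge to a visibly non-surjective map, which rests on the character-theoretic matching of \cref{propositionNeededForEquivariantHomotopy} and \cref{ThetaDihedral}. Your text acknowledges this is ``the main obstacle'' but offers no argument for it.

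Your description of the moduli is also not quite the right object: in the paper $\MM$ is the product of centralizers $\centralizer_\lieGroup(\rhoG(G_e))$ over edge orbits, $\MMbar=\MM/\HH$ is its quotient by a group built from the vertex-stabilizer centralizers, and the degree argument concerns the word map $\WWbar\colon\MMbar_k\to\lieGroup^{k+1}$, not the quotient map $\MM\to\MMbar$ (whose fibers are positive-dimensional $\HH$-orbits, so ``another preimage of $\identityMMBar$'' there carries no information). Finally, targets such as $\mathrm{SU}(2)$ or $\SO(3)$ are too small: the construction needs the degree-$(q-1)$, degree-$\frac{q-1}{2}$, and degree-$\frac{1}{2}r(q-1)$ irreducibles of \cref{descriptionCentralizersPowersOfTwo,descriptionCentralizersThreeModFour,descriptionCentralizersSuzuki} in order for the dimension count $\dim\MMbar_k=\dim\lieGroup^{k+1}$ to come out right.
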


\begin{maintheorem}[{\cite{part2}}]\label{thmC}
 Let $G$ be one of the groups $\PSL_2(q)$ with $q>5$ and $q\equiv 5\pmod {24}$ or $q\equiv 13 \pmod{24}$.
 Then the fundamental group of every $2$-dimensional, fixed point free, finite and acyclic $G$-complex admits a nontrivial representation in a unitary group $\U(m)$.
\end{maintheorem}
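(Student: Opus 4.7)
The proof would follow the same general template as \cref{thmB} proved in this paper. Given $G = \PSL_2(q)$ with $q > 5$ and $q \equiv 5$ or $13 \pmod{24}$, and an arbitrary $2$-dimensional, fixed-point-free, finite and acyclic $G$-complex $X$, the aim is to exhibit a nontrivial representation $\pi_1(X) \to \U(m)$. First I would invoke the Oliver--Segev structure theorem underlying \cref{OSThmA} to put $X$ into a canonical combinatorial form, recording its (necessarily solvable) vertex and edge stabilizers. For $q \equiv 5 \pmod 8$ the Sylow $2$-subgroup of $G$ is a Klein four group $V_4$ with normalizer $A_4$, and the other maximal solvable subgroups of interest are a Borel subgroup and the two dihedral subgroups of orders $q-1$ and $q+1$.

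Second, following the framework developed in this paper for \cref{thmB}, I would set up a moduli space $\MM$ of representations of $\pi_1(X)$ into an auxiliary compact Lie group $\lieGroup$ (plausibly $\SO(3)$ or $\PSL_2(\C)$), obtained by patching compatible representations of the cell stabilizers via the graph-of-groups presentation of $\pi_1(X)$ induced by the quotient $G\backslash X$. Then I would deploy the mapping-degree argument: choose a suitable compactification $\MMbar$, construct an evaluation map $\MMbar \to \lieGroup^N$ whose zeros correspond to trivial representations, and show that the degree of this map is nonzero. A nonzero degree forces the existence of a representation $\rho \in \MM$ that does not factor through the trivial element at every cell, hence is nontrivial, and $\rho$ can then be embedded into some $\U(m)$.

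The main obstacle, and presumably the reason this case is deferred to \cite{part2}, lies in the degree computation. Although the $V_4$-normalizer $A_4$ occurs in both the $q \equiv 3$ and $q \equiv 5 \pmod 8$ cases, the parities of $(q-1)/2$ and $(q+1)/2$ switch: when $q \equiv 5 \pmod 8$, the dihedral subgroup $D_{(q-1)/2}$ contains a $V_4$ while $D_{(q+1)/2}$ does not, reversing the pattern of incidences with $A_4$ and altering the gluing data that defines $\MM$. Establishing a nonzero degree therefore requires either enlarging the target Lie group or redesigning the compactification to absorb the new contributions, and the verification ultimately depends on character-theoretic input for $\PSL_2(q)$ in this congruence class that is genuinely different from the one handled here. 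I expect the bulk of Part II to be precisely this re-computation, with the topological framework of Part I carried over essentially unchanged.
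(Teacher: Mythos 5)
This statement is not proved in the present paper at all: it is \cref{thmC}, which is explicitly deferred to the second part \cite{part2}, so there is no internal proof to compare against. Judged on its own merits, your sketch correctly identifies the general template (Oliver--Segev normal form, the moduli $\MMbar$ built from the graph-of-groups presentation, a mapping-degree argument), but it contains two substantive errors. First, the degree logic is inverted. In the method of this paper one proves that $\WWbar\colon\MMbar_k\to\lieGroup^{k+1}$ has degree \emph{zero} and that the universal point $\identityMMBar$ is a \emph{regular} point of $\WWbar$ lying in the fibre over $\identityLieGroup$; since a regular point contributes $\pm 1$ to the degree, the fibre must contain a second point, and that point yields the nontrivial representation. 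A ``nonzero degree'' would only guarantee the fibre is nonempty, which is already witnessed by $\identityMMBar$ and gives nothing. Second, you have misidentified the obstruction that forces the case $q\equiv 5\pmod 8$ into Part II. It is not primarily the reshuffling of the $V_4$-incidences among $A_4$, $D_{q-1}$ and $D_{q+1}$: the paper states explicitly that the groups of \cref{thmB} admit a nontrivial representation restricting \emph{irreducibly} to the Borel subgroup, while the groups of \cref{thmC} do not. That irreducibility is exactly the hypothesis of \cref{universalIsUnique}, which guarantees that $\identityMMBar$ is the \emph{only} point of $\MMbar$ corresponding to a universal representation; without it, the second preimage point produced by the degree argument could again be universal, and the whole conclusion collapses. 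Handling this (e.g.\ via the further quotient $\MMbarbar$ alluded to after \cref{universalIsUnique}, which is in general not a manifold) is the genuine new difficulty.

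A smaller but still real problem is your choice of target group. Taking $\lieGroup=\PSL_2(\C)$ destroys compactness (needed for properness of the $\HH$-action and for \cref{QuotientManifoldTheorem}), and taking $\lieGroup=\SO(3)$ destroys the connectedness of centralizers, which via \cref{centralizersAreConnectedInUnitaryGroup} is what makes $\MM$ and $\MMbar$ connected and orientable and what powers the $\HH$-equivariant homotopies of \cref{lemmaRemovingConstantsGeneralization,inducedMapIsHomotopicToMapWithFactorsConjugated}. The statement itself asks for a representation in $\U(m)$, and the machinery of this paper is built around unitary targets for these reasons; any correct execution of your plan would have to stay there.
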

The proof of \cref{thmC} appears in the second part of this work~\cite{part2}, which is joint with Kevin Piterman.

To prove \cref{thmB,thmC}, we use the method of~\cite{A5contractible} but with a more generic approach.
If $\oneComplex$ is a $G$-graph we consider the group extension $\Gamma = \pi_1(\oneComplex,x_0)\unsplitExtension G$. 
If $X$ is obtained from $\oneComplex$ by attaching orbits of $2$-cells, a result of Kenneth S. Brown~\cite{BrownPresentations} gives an extension
$\Gamma/\llangle w_0,\ldots, w_k\rrangle\groupIso \pi_1(X)\unsplitExtension G$,
where the $w_i\in \ker(\phi\colon \Gamma\to G)\groupIso \pi_1(\oneComplex)$
are words corresponding to the orbits of $2$-cells of $X$.
Then obtaining a nontrivial representation of $\pi_1(X)$ reduces to obtaining a representation of $\Gamma$ which factors through the quotient $\Gamma\to \Gamma/\llangle w_0,\ldots, w_k\rrangle$ and does not factor through $\phi$.

In this paper we develop general machinery to obtain a moduli of representations $\MMbar$ of $\Gamma$ from a single representation $\rhoG\colon G\to \lieGroup$, where $\lieGroup$ is a Lie group.
Each word $w\in\Gamma$ induces a map $W\colon \MMbar\to \lieGroup$ and then the proof reduces to finding a suitable point $\taubar\in \MMbar$.
With some hypotheses on $\rhoG\colon G\to \lieGroup$, there is a single point $\identityMMBar\in \MMbar$ which gives a representation that factors through $\phi$.
Then, by considering $\WWbar=(W_0,\ldots,W_k)\colon \MMbar \to \lieGroup^{k+1}$, the proof reduces to finding a point $\taubar\neq \identityMMBar \in \MMbar$ such that $\WWbar(\taubar)=\identityLieGroup$.
When we apply these results to the groups in \cref{OSThmA} it turns out that $\MMbar$ and $\lieGroup^{k+1}$ are orientable manifolds of the same dimension.
To complete the proof we show that $\identityMMBar$ is a regular point of $\WWbar$ and that $\WWbar$ has degree $0$.

The groups in \cref{thmB} share a key property: they admit a nontrivial representation which restricts to an irreducible representation of the Borel subgroup.
However, the groups in \cref{thmC} lack this property.
In~\cite{part2} some modifications to the approach of the first part are introduced in order to extend the proof to these groups.

\bigskip

\textbf{Acknowledgements.}
I am grateful to Kevin Piterman for his help in understanding the structure and representations of the finite simple groups $\PSL_2(q)$ and $\Sz(q)$ and for many fruitful discussions.
I am also grateful to Ignacio Darago for answering all of my questions on representation theory and Lie theory.
I would like to thank Jonathan Barmak and Gabriel Minian for useful comments on an earlier version of this work.
Finally, I am grateful to the anonymous referee for the detailed review and valuable suggestions which simplified some proofs improving the exposition.


\section{The results of Oliver and Segev}\label{SectionOliverSegev}

In this section we recall the results from~\cite{OS} that are needed later.
By \textit{$G$-complex} we always mean a $G$-CW complex.
That is, a CW complex with a continuous $G$-action that is \textit{admissible}
(i.e. the action permutes the open cells of $X$, and maps a cell to itself only via the identity).
For more details, see~\cite[Appendix A]{OS}.
A \textit{graph} is a $1$-dimensional CW complex. 
By \textit{$G$-graph} we mean a $1$-dimensional $G$-complex.

\begin{definition}[{\cite{OS}}]\label{DefEssential}
A $G$-complex $X$ is \textit{essential} if there is no normal subgroup $1\neq N\triangleleft G$ such that for each $H\subseteq G$, the inclusion $X^{HN}\to X^H$ induces an isomorphism on integral homology.
\end{definition}

The following fundamental result of Segev~\cite[Theorem 3.4]{Segev} will be used frequently, sometimes implicitly.
We state the version given in~\cite{OS}.

\begin{theorem}[{\cite[Theorem 4.1]{OS}}]\label{AciclicoOVacio}
Let $X$ be any $2$-dimensional acyclic $G$-complex (not necessarily finite). Then $X^G$ is acyclic or empty, and is acyclic if $G$ is solvable.
\end{theorem}

\begin{definition}[{\cite{OS}}]\label{defSeparating}
By a \textit{family} of subgroups of $G$ we mean any set $\FF$ of subgroups of $G$ which is closed under conjugation.
A nonempty family is said to be \textit{separating} if it has the following three properties: (a) $G\notin \FF$; (b) if $H'\subseteq H$ and $H\in \FF$ then $H'\in \FF$; (c) for any $H\triangleleft K\subseteq G$ with $K/H$ solvable, $K\in \FF$ if $H\in \FF$.

For any family $\FF$ of subgroups of $G$, a \textit{$(G,\FF)$-complex} is a $G$-complex all of whose isotropy subgroups lie in $\FF$.
A $(G,\FF)$-complex is \textit{universal} (resp. H-\textit{universal}) if the fixed point set of each $H\in \FF$ is contractible (resp. acyclic).
\end{definition}

If $G$ is not solvable, the separating family of solvable subgroups of $G$ is denoted by $\SLV$.

\begin{lemma}[{\cite[Lemma 1.2]{OS}}]\label{lemma1.2}
Let $X$ be any $2$-dimensional acyclic $G$-complex without fixed points.
Let $\FF$ be the set of subgroups $H\subseteq G$ such that $X^H\neq \emptyset$.
Then $\FF$ is a separating family of subgroups of $G$, and $X$ is an {\normalfont H}-universal $(G,\FF)$-complex.
\end{lemma}

\begin{proposition}[{\cite[Proposition 6.4]{OS}}]\label{proposition6.4}
 Assume that $L$ is one of the simple groups $\PSL_2(q)$ or $\Sz(q)$, where $q=p^k$ and $p$ is prime ($p=2$ in the second case).
 Let $G\subseteq\Aut(L)$ be any subgroup containing $L$, and let $\FF$ be a separating family for $G$.
 Then there is a $2$-dimensional acyclic $(G,\FF)$-complex if and only if $G=L$, $\FF=\SLV$, and $q$ is a power of $2$ or $q\equiv \pm 3 \pmod 8$. 
\end{proposition}

If $X$ is a poset, then  $\K(X)$ denotes the \textit{order complex} of $X$, that is, the simplicial complex with simplices the finite nonempty totally ordered subsets of $X$ (the complex $\K(X)$ is also known as the \textit{nerve} of $X$).

\begin{definition}[{\cite[Definition 2.1]{OS}}]
For any family $\FF$ of subgroups of $G$ define 
\[i_\FF(H)=\frac{1}{[N_G(H):H]}(1-\chi(\K(\FF_{>H}))).\]
\end{definition}

Recall that if $G\actson X$, the orbit $G\cdot x$ is said to be \textit{of type $G/H$} if the stabilizer $G_x$ is conjugate to $H$ in $G$.
In other words, if the action of $G$ on $G\cdot x$ is the same as the action of $G$ on $G/H$.

\begin{lemma}[{\cite[Lemma 2.3]{OS}}]\label{lemma2.3}
Fix a separating family $\FF$, a finite {\normalfont H}-universal $(G,\FF)$-complex $X$, and a subgroup $H\subseteq G$.
For each $n$, let $c_n(H)$ denote the number of orbits of $n$-cells of type $G/H$ in $X$. Then $i_\FF(H)=\sum_{n\geq 0} (-1)^nc_n(H)$.
\end{lemma}

\begin{proposition}[{\cite[Tables 2,3,4]{OS}}]\label{indices}
Let $G$ be one of the simple groups $\PSL_2(2^k)$ for $k\geq 2$, $\PSL_2(q)$ for $q\equiv \pm 3 \pmod 8$ and $q\geq 5$, or $\Sz(2^k)$ for odd $k\geq 3$. Then $i_\SLV( 1 ) = 1$.
\end{proposition}

For each family of groups appearing in \cref{OSThmA}, Oliver and Segev describe an example.
In what follows, $D_{2m}$ is a dihedral group of order $2m$ and $C_m$ is a cyclic group of order $m$.

\begin{proposition}[{\cite[Example 3.4]{OS}}]\label{example3.4}
Set $G=\PSL_2(q)$, where $q=2^k$ and $k\geq 2$.
Then there is a $2$-dimensional acyclic fixed point free $G$-complex $X$, all of whose isotropy subgroups are solvable.
More precisely $X$ can be constructed to have three orbits of vertices with isotropy subgroups isomorphic to $B=\F_q\rtimes C_{q-1}$, $D_{2(q-1)}$, and $D_{2(q+1)}$; three orbits of edges with isotropy subgroups isomorphic to $C_{q-1}$, $C_2$ and $C_2$; and one free orbit of $2$-cells. 
\end{proposition}

\begin{proposition}[{\cite[Example 3.5]{OS}}]\label{example3.5}
Assume that $G=\PSL_2(q)$, where $q=p^k\geq 5$ and $q\equiv \pm 3 \pmod 8$. Then there is a $2$-dimensional acyclic fixed point free $G$-complex $X$, all of whose isotropy subgroups are solvable. More precisely, $X$ can be constructed to have four orbits of vertices with isotropy subgroups isomorphic to $B=\F_q\rtimes C_{(q-1)/2}$, $D_{q-1}$, $D_{q+1}$, and $A_4$; four orbits of edges with isotropy subgroups isomorphic to $C_{(q-1)/2}$, $C_2^2$, $C_3$ and $C_2$; and one free orbit of $2$-cells.
\end{proposition}

\begin{proposition}[{\cite[Example 3.7]{OS}}]\label{example3.7}
Set $q=2^{2k+1}$ for any $k\geq 1$. Then there is a $2$-dimensional acyclic fixed point free $\Sz(q)$-complex $X$, all of whose isotropy subgroups are solvable. More precisely, $X$ can be constructed to have four orbits of vertices with isotropy subgroups isomorphic to $M(q,\theta)$, $D_{2(q-1)}$, $C_{q+\sqrt{2q}+1}\rtimes C_4$, $C_{q-\sqrt{2q}+1}\rtimes C_4$; four orbits of edges with isotropy subgroups isomorphic to $C_{q-1}$, $C_4$, $C_4$ and $C_2$; and one free orbit of $2$-cells.
\end{proposition}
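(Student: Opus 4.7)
The statement is essentially a direct quote from Oliver--Segev, so my plan is to invoke their construction \cite[Example 3.7]{OS} and verify the listed data. First I would record the relevant subgroup lattice of $\Sz(q)$ for $q=2^{2k+1}$: the maximal solvable subgroups fall into four conjugacy classes, represented by the Borel subgroup $M(q,\theta)$ of order $q^2(q-1)$, the dihedral normalizer $D_{2(q-1)}$ of a split torus, and the two Frobenius-type normalizers $C_{q\pm\sqrt{2q}+1}\rtimes C_4$ of the two families of non-split tori. Their pairwise intersections inside a fixed Borel are, respectively, $C_{q-1}$ (the split torus), $C_4$, $C_4$, and $C_2$ (the center of a Sylow $2$-subgroup), matching the four claimed edge stabilizers.

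Next I would assemble the $1$-skeleton $\oneComplex$ as the graph with one orbit of vertices for each of the four subgroups above and one orbit of edges for each of the four intersections, joining cosets in the evident way. The quotient $\oneComplex/G$ then has four vertices and four edges, so $\chi(\oneComplex/G)=0$; the equivariant Euler characteristic $\sum (-1)^{\dim\sigma}/|G_\sigma|$ summed over cell orbits is also compatible with the target.

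Finally I would attach a single free orbit of $2$-cells along a closed edge path $\omega$ based at a vertex of trivial stabilizer, producing the $2$-complex $X$. Attaching a free orbit of $2$-cells raises $\chi(X/G)$ by $1$, so $\chi(X/G)=1$, the value for a point. The remaining work, which is the genuine content of \cite{OS}, is to choose $\omega$ so that $H_1(X)=0$: Oliver--Segev show $H_1(\oneComplex)$ is infinite cyclic (consistent with the Euler characteristic computation) and exhibit an explicit $\omega$ representing a generator, forcing $\widetilde{H}_*(X)=0$. Fixed-point-freeness is then automatic because none of the four maximal subgroups contains a nontrivial normal subgroup of $\Sz(q)$, and solvability of all isotropy subgroups is built in by construction. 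The main obstacle is the final step--producing $\omega$ explicitly--which relies on the classical Suzuki subgroup structure recalled in \cite[\S 2]{OS}.
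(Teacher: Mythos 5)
The paper offers no proof of this proposition at all: it is recalled verbatim from Oliver--Segev and justified purely by the citation to \cite[Example 3.7]{OS}, so your decision to invoke that reference and merely verify the listed data is exactly the paper's approach. However, two concrete points in your supplementary sketch are wrong and would derail the argument if you actually tried to carry it out. First, $H_1(\oneComplex)$ is \emph{not} infinite cyclic: the graph has $q^4+q^2+1$ vertices and $q^5+q^3$ edges, so its first Betti number is $q^5-q^4+q^3-q^2=|\Sz(q)|$, and indeed $H_1(\oneComplex)\cong \Z[G]$ as a $G$-module. Consequently the condition on the attaching path $\omega$ is not that it ``represent a generator'' of a copy of $\Z$, but that the class $[\omega]$ be a free $\Z[G]$-module generator of $H_1(\oneComplex)$ (equivalently, that $d_2\colon C_2(X)\cong\Z[G]\to H_1(\oneComplex)\cong\Z[G]$ send $1$ to a unit of $\Z[G]$); this is precisely the delicate point that Oliver--Segev's construction resolves, and your bookkeeping hides it. Your Euler-characteristic check is consistent ($\chi(X)=1$ does work out), but $\chi(X/G)=1$ by itself is not the relevant quantity.

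Second, there is no ``vertex of trivial stabilizer'' at which to base $\omega$: all four vertex orbits have the nontrivial stabilizers $B$, $D_{2(q-1)}$, $C_{q\pm\sqrt{2q}+1}\rtimes C_4$. Freeness of the orbit of $2$-cells refers to the stabilizer of the $2$-cell itself, not of the basepoint of its attaching path (in the paper's setup the basepoint $v_0$ has stabilizer the Borel subgroup). Relatedly, fixed-point-freeness follows simply from the fact that every isotropy group is a proper (solvable) subgroup of the simple group $\Sz(q)$, so no cell is fixed by all of $G$; the condition you state about normal subgroups is an unnecessary detour. None of this affects the legitimacy of citing \cite{OS} for the result, but as a self-contained verification your sketch would not close.
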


In all three cases, the stabilizers for the orbits of vertices are precisely the maximal solvable subgroups of $G$ (this is key to construct the examples, see \cite[Section 3]{OS} for more details).
Note also that $\PSL_2(4)\cong\PSL_2(5)\cong A_5$, so this group is addressed in both \cref{example3.4} and \cref{example3.5}.
There is no other such exception.


\section{A reduction}
\label{SectionReduction}

In this section we build upon the results of Oliver and Segev to prove \cref{Refinement},
which reduces the proof of \cref{thmB,thmC} to the special case given by the acyclic $2$-complexes of the type constructed in~\cite{OS}.

\begin{definition}
If $G$ is one of the groups in \cref{OSThmA}, the \textit{Oliver--Segev graph $\XOS(G)$} is
the $1$-skeleton of any $2$-dimensional fixed point free acyclic $G$-complex
of the type constructed in \cref{example3.4,example3.5,example3.7}.
For any $k\geq 0$, we also consider the $G$-graph $\XOSk(G)$
obtained from $\XOS(G)$ by attaching $k$ free orbits of $1$-cells.
\end{definition}

For the previous definition we regard $A_5$ as $\PSL_2(2^2)$ rather than $\PSL_2(5)$.
Generally, there is more than one possible choice for the $G$-graph $\XOS(G)$.
Even for $G=A_5$, thought of as $\PSL_2(2^2)$, the quotient graph $\XOS(G)/G$ is not unique.

\begin{definition}
If $X,Y$ are $G$-spaces, a \textit{$G$-homotopy} is an equivariant map $H\co X\times I \to Y$.
We say that $f_0(x)=H(x,0)$ and $f_1(x)=H(x,1)$ are \textit{$G$-homotopic} and we denote this by $f_0\simeq_G f_1$.
An equivariant map $f\co X\to Y$ is a \textit{$G$-homotopy equivalence} if there is an equivariant map $g\co Y\to X$ such that $fg\simeq_G 1_Y$ and $gf\simeq_G 1_X$.
A $G$-invariant subspace $A$ of $X$ is a \textit{strong $G$-deformation retract of $X$} if there is a retraction $r\co X\to A$ such that there is a $G$-homotopy $H\co ir\simeq 1_X$ relative to $A$, where $i\co A\to X$ is the inclusion.
\end{definition}

\begin{remark}
An equivariant map $f\co X\to Y$ is a $G$-homotopy equivalence if and only if $f^H\co X^H \to Y^H$ is a homotopy equivalence for each subgroup $H\leq G$ (see~\cite[Chapter II, (2.7) Proposition]{Dieck}).
Thus, if $f\co X\to Y$ is a $G$-homotopy equivalence, the action $G\actson X$ is fixed point free (resp. essential) if and only if the action $G\actson Y$ is fixed point free (resp. essential).
\end{remark}

The following explains why our choice of $\XOS(G)$
and the way the free orbits of $1$-cells are attached to it to obtain $\XOSk(G)$ is not relevant for our purposes.

\begin{proposition}\label{UnicidadXOS}
The graph $\XOS(G)$ is unique up to $G$-homotopy equivalence.
Moreover, for any $k\geq 0$, $\XOSk(G)$ is unique up to $G$-homotopy equivalence.
\begin{proof}
Since any choice of $\XOS(G)$ is a universal $(G,\mathcal{SLV}-\{1\})$-complex, the first part follows from~\cite[Proposition A.6]{OS}.
The second part follows easily from the first,
the fact that homotopic attaching maps give rise to homotopy equivalent adjunction spaces {\cite[7.5.5 (Corollary 1)]{TopologyGroupoids}},
and the gluing theorem for adjunction spaces {\cite[7.5.7]{TopologyGroupoids}}.
\end{proof}
\end{proposition}

\begin{corollary}\label{PosiblesPi1NoDependenDeGammaOS}
The set of $G$-homotopy equivalence classes of $2$-dimensional acyclic $G$-complexes
which can be obtained from $\XOSk(G)$ by attaching $k+1$ free orbits of $2$-cells
does not depend on the choice of $\XOSk(G)$.
In particular, the set of isomorphism classes of groups
that occur as the fundamental group of such spaces does not depend on such choices.
\begin{proof}
Again, this is an easy application of {\cite[7.5.5 (Corollary 1)]{TopologyGroupoids}} and {\cite[7.5.7]{TopologyGroupoids}}.
\end{proof}
\end{corollary}

The following lemma allows us to do elementary expansions equivariantly.
\begin{lemma}\label{LemaExpansiones}
Let $X$ be an acyclic $2$-dimensional $G$-complex. Let $H\leq G$ and $x_0,x_1\in X^{(0)}\cap X^H$.
Then there is a $G$-complex $Y\supset X$ that strong $G$-deformation retracts to $X$
and is obtained from $X$ by attaching an orbit of $1$-cells of type $G/H$ with endpoints $\{ x_0 , x_1\}$ and an orbit of $2$-cells of type $G/H$.
 
\begin{proof}
We attach an orbit of $1$-cells of type $G/H$ to $X$ using the attaching map $\varphi\colon G/H \times S^0 \to X^{(0)}$
defined by  $\varphi(gH,1)= g\cdot x_0$, $\varphi(gH,-1)= g\cdot x_1$.
Let $e$ be the $1$-cell of this new orbit corresponding to the coset $H$. Since $X$ is acyclic, by \cref{AciclicoOVacio} $X^H$ is also acyclic.
Let $\gamma$ be an edge path in $X^H$ starting at $x_1$ and ending at $x_0$.
Then we attach an orbit of $2$-cells of type $G/H$ in such a way
that the $2$-cell corresponding to the coset $H$ is attached along the closed edge path given by concatenating $e$ and $\gamma$.
It is clear that $X$ is a strong $G$-deformation retract of $Y$.
\end{proof}
\end{lemma}

We recall some very natural definitions which appear in~\cite[Section 2]{KLV}.
A \textit{forest} is a graph with trivial first homology.
If a subcomplex $\Gamma$ of a CW complex $X$ is a forest, there is a CW complex $Y$ obtained from $X$ by shrinking each connected component of $\Gamma$ to a point.
The quotient map $q\co X\to Y$ is a homotopy equivalence and we say $Y$ is obtained from $X$ by a \textit{forest collapse}.
If $X$ is a $G$-complex and $\Gamma\subset X$ is a forest which is $G$-invariant,
the quotient map $q$ is a $G$-homotopy equivalence
and we say the $G$-complex $Y$ is obtained from $X$ by a $G$-\textit{forest collapse}.
We say that a $G$-graph is \textit{reduced} if it has no edge $e$ such that $G\cdot e$ is a forest.

\begin{lemma}\label{Lema1EsqueletoReducido}
Let $X$ be a $2$-dimensional acyclic $G$-complex. If $X^{(1)}$ is a reduced $G$-graph then stabilizers of different vertices are not comparable.

\begin{proof}
Let $\FF=\{ G_x \tq x\in X^{(0)}\}$ and let $M=\{ v\in X^{(0)} \tq G_v \text{ is maximal in }\FF \}$.
We first prove, by contradiction, that $X^{(0)}=M$.
Consider $v\in X^{(0)}-M$ such that $G_v$ is maximal in $\{G_{x} \tq x \in X^{(0)}-M\}$.
Then since $X^{G_v}$ contains $v$, by \cref{AciclicoOVacio} it must be acyclic.
Since $v\notin M$, there is a vertex $w\in X^{G_v}\cap M$.
By connectivity there is an edge $e\in X^{G_v}$ whose endpoints $v'$ and $w'$ satisfy $v'\notin M$ and $w'\in M$.
Since $G_{v'}\geq G_v$ and $v'\notin M$, by our choice of $v$ we have $G_v=G_{v'}$.
Since $e\in X^{G_v}$ we have $G_v\leq G_e$ and since $v'$ is an endpoint of $e$ we have $G_e \leq G_{v'}$. 
Thus $G_e=G_{v'}$ and then the degree of $v'$ in the graph $G\cdot e$ (whose vertex set is the disjoint union of $G\cdot w'$ and $G\cdot v'$) is $1$.
Thus $G\cdot e$ is a forest, contradiction.
Therefore we must have $M=X^{(0)}$.

To conclude we have to prove that different vertices $u,v\in M$ have different stabilizers.
Suppose $G_u=G_v$ to get a contradiction.
Since $u,v$ are vertices of $X^{G_u}$ which is connected, there is an edge $e\in X^{G_u}$ and by maximality we must have $G_e=G_u$.
If $u',v'$ are the endpoints of $e$, we have $G_{u'}=G_{v'}$. We have two cases and in any case we obtain a contradiction.
If $G\cdot u' \neq G \cdot v'$ then $G\cdot e$ is a forest consisting of $|G/G_e|$ disjoint edges, contradiction.
Otherwise, there is a nontrivial element $g\in G$ such that $g\cdot u' = v'$ and we have $G_{u'}= G_{v'} = gG_{u'}g^{-1}$. Thus $g\in N_G( G_{u'} )$.
Consider the action of $\langle g \rangle$ on $X^{G_{u'}}$, which is acyclic and thus has a fixed point by the Lefschetz fixed point theorem.
But this cannot happen, since it would imply that $\langle G_{u'}, g \rangle \gneq G_{u'}$ fixes a point of $X$, which is a contradiction because $u'\in M$.
\end{proof}
\end{lemma}

Now we prove the main result of this section.

\begin{theorem}\label{Refinement}
 Let $G$ be one of the groups in \cref{OSThmA}.
 Let $X$ be a fixed point free $2$-dimensional finite acyclic $G$-complex.
 Then there is a fixed point free $2$-dimensional finite acyclic $G$-complex $X'$
 obtained from $\XOSk(G)$ (for some $k\geq 0$) by attaching
 $k+1$ free orbits of $2$-cells and an epimorphism $\pi_1(X)\to \pi_1(X')$.
  \begin{proof}
    Let $\FF=\{H\leq G \tq X^H\neq \emptyset \}$.
    Then, by \cref{lemma1.2}, $\FF$ is a separating family and $X$ is an {\normalfont{H}}-universal $(G,\FF)$-complex.
    By \cref{proposition6.4}, we must have $\FF=\mathcal{SLV}$.
    By doing enough $G$-forest collapses we can assume that $X^{(1)}$ is a reduced $G$-graph.
    The stabilizers of the vertices of $\XOS(G)$ are precisely the maximal solvable subgroups of $G$.
    Therefore, since every solvable subgroup of $G$ fixes a point of $X$,
    by \cref{Lema1EsqueletoReducido}, we may identify  $X^{(0)}=\XOS{(G)}^{(0)}$.
    Applying \cref{LemaExpansiones} enough times to modify $X$, we may further assume $\XOS(G)$ is a subcomplex of $X$.

    Finally we will modify $X$ so that for every subgroup $1\neq H\leq G$, we have $X^H=\XOS{(G)}^H$.
    We do this by reverse induction on $|H|$.
    Assume that we have $X$ such that our claim holds for every subgroup $K$ with $H< K\leq G$.
    If $H$ is not solvable, we have $X^H=\XOS(G)^H=\emptyset$ so we are done.
    If $H$ is solvable, since $\XOS{(G)}^H$ is a tree (it is acyclic and $1$-dimensional) and $X^H$ is acyclic by \cref{AciclicoOVacio},
    the inclusion $\XOS{(G)}^H\hookrightarrow X^H$ is an $N_G(H)$-equivariant homology equivalence. 
    Now, since $\XOS{(G)}^H$ is a tree, we can define an $N_G(H)$-equivariant retraction $r_H\co  X^H\to \XOS{(G)}^H$.
    Then $r_H$ is a homology equivalence.
    Moreover, the stabilizer of the cells in $X^H - \XOS(G)^H$ is $H$ (the stabilizer cannot be bigger by the induction hypothesis).
    We define retractions $r_{H^g}\co X^{H^g}\to \XOS{(G)}^{H^g}$ by  $r_{H^g}(gx)=g\cdot r_H(x)$ which glue to give a $G$-equivariant homology equivalence
      \[r\co \XOS(G) \bigcup_{g\in G} X^{H^g}\to \XOS(G).\]
    We may replace $X$ by the pushout $\wt{X}$ given by the following diagram
    \begin{center}
    \begin{tikzcd}
    \XOS(G) \displaystyle\bigcup_{g\in G} X^{H^g} \arrow[hook]{d}[]{}\arrow{r}[]{r} & \XOS(G)\arrow[hook]{d}{} \\
    X\arrow[]{r}[swap]{\overline{r}}& \wt{X}
    \end{tikzcd}
    \end{center}
    It follows that $\overline{r}$ is a homology equivalence, so the resulting $G$-complex $\wt{X}$ is acyclic.
    Moreover since $\wt{X}^{(1)}$ is a subcomplex of $X^{(1)}$ and the restriction $\overline{r}\colon X^{(1)}\to \wt{X}^{(1)}$ is a retraction,
    $\overline{r}$ induces an epimorphism on $\pi_1$.
    This procedure removes the excessive orbits of cells of type $G/H$.
    By induction we obtain a complex $X'$ such that $X'^{(1)}$ coincides with $\XOS(G)$ up to $k\geq 0$ free orbits of $1$-cells and such that every orbit of $2$-cells is free.
    By \cref{lemma1.2} $X'$ is an H-universal $(G,\mathcal{SLV})$-complex.
    Now by \cref{lemma2.3} and \cref{indices} there are exactly $k+1$ orbits of $2$-cells.
\end{proof}
\end{theorem}

We conclude this section by describing, for each of the groups $G$ in \cref{OSThmA},
a feasible way to connect the orbits in the graph $\XOS(G)$.
The following lemma will be handy.

\begin{lemma}\label{lemmaSlidingOneCells}
  Let $G$ be a finite group and let $X_1$ be a $G$-graph.
  Let $u,v,w$ be vertices of $X_1$ and let $e,e'$ be edges such that $e$ has endpoints $\{u,v\}$ and $e'$ has endpoints $\{v,w\}$.
  Suppose that $G_{e'}\subseteq G_e$.
  Consider the $G$-graph $Y_1$ obtained from $X_1$ by removing the orbit of $e'$ and attaching an orbit $e''$ of edges of type $G/G_{e'}$ with endpoints $\{u,w\}$
  (i.e. the attaching map $\varphi\colon G/G_{e'} \times S^0 \to X_1 - G\cdot e'$ for the orbit of $e''$ is defined by $\varphi(gH,1)=gu$, $\varphi(gH,-1)=gw$).
  Then $X_1$ and $Y_1$ are $G$-homotopy equivalent.
  \begin{proof}
  The graphs $X_1$ and $Y_1$ are $G$-homotopy equivalent because both are obtained from $X_1 - G \cdot e'$ by attaching an orbit of $1$-cells of type $G/G_{e'}$
  and these attaching maps are $G$-homotopic (the homotopy can be easily written down using $e$ and its orbit).
  \end{proof}
 \end{lemma}
 
 \begin{figure}[ht]
   \begin{minipage}{0.48\textwidth}
       \centering
       \begin{tikzcd}
       B \arrow[-]{d}[swap]{C_{q-1}} \arrow[-]{dr}[]{C_2} & \\
       D_{2(q-1)} & D_{2(q+1)}\arrow[-]{l}[]{C_2}
       \end{tikzcd}
       
        \bigskip
       
       $G=\PSL_2(2^n)$.
   \end{minipage}
   \begin{minipage}{0.48\textwidth}
       \centering
       \begin{tikzcd}
       B \arrow[-]{d}[swap]{C_{\frac{q-1}{2}}}\arrow[-]{r}[]{C_3} & A_4\arrow[-]{d}[]{C_2^2} \\
       D_{q-1} \arrow[-]{r}[swap]{C_2} & D_{q+1}
       \end{tikzcd}
 
       \bigskip
       
       $G=\PSL_2(3^n)$, odd $n$.
   \end{minipage}
   \bigskip
   
   \begin{minipage}{0.48\textwidth}
     \centering
     \begin{tikzcd}
     B \arrow[-]{d}[swap]{C_{\frac{q-1}{2}}}\arrow[-]{r}[]{C_3} & A_4\arrow[-]{d}[]{C_2^2} \\
     D_{q-1} \arrow[-]{r}[swap]{C_2} & D_{q+1}
     \end{tikzcd}
     
       \bigskip
       
     $G=\PSL_2(q)$, $q\equiv 19\pmod {24}$.
    \end{minipage}
    \begin{minipage}{0.48\textwidth}
     \centering
     \begin{tikzcd}
     B \arrow[-]{d}[swap]{C_{\frac{q-1}{2}}} & A_4\arrow[-,bend left=20]{d}[]{C_2^2} \arrow[-,bend right=20]{d}[swap]{C_3} \\
     D_{q-1} \arrow[-]{r}[swap]{C_2} & D_{q+1}
     \end{tikzcd}
     
       \bigskip
       
       $G=\PSL_2(q)$, $q\equiv 11\pmod {24}$.
    \end{minipage}
    
    \bigskip
    
    \begin{minipage}{0.48\textwidth}
       \centering
       \begin{tikzcd}
       B \arrow[-]{d}[swap]{C_{\frac{q-1}{2}}} \arrow[-]{r}[]{C_3}& A_4\arrow[-]{dl}[]{C_2^2} \\
       D_{q-1} \arrow[-]{r}[swap]{C_2} & D_{q+1}
       \end{tikzcd}
       
       \bigskip
       
       $G=\PSL_2(q)$, $q\equiv 13\pmod {24}$.
    \end{minipage}
   \begin{minipage}{0.48\textwidth}
     \centering
     \begin{tikzcd}
     B \arrow[-]{d}[swap]{C_{\frac{q-1}{2}}}\arrow[-]{r}[]{C_2}  & D_{q+1} \arrow[-]{d}[]{C_3} \\
     D_{q-1} \arrow[-]{r}[swap]{C_2^2} & A_4
     \end{tikzcd}
     
     \bigskip
     
     $G=\PSL_2(q)$, $q\equiv 5\pmod {24}$.
    \end{minipage}
    
    \bigskip
    
      \begin{minipage}{0.48\textwidth}
       \centering
       \begin{tikzcd}
       B \arrow[-]{r}[]{C_4}  \arrow[-]{d}[swap]{C_{q-1}}&  C_{q-r+1}\rtimes C_4\arrow[-]{d}[]{C_4} \\
       D_{2(q-1)} \arrow[-]{r}[swap]{C_2} & C_{q+r+1}\rtimes C_4
       \end{tikzcd}
       
       \bigskip
       
       $G=\Sz(q)$, $q=2^n$.
    \end{minipage}
    \caption{
      One of the possible ways to construct the orbits of $\XOS(G)$ in each case.
      The figure depicts, in each case, the quotient graph $\XOS(G)/G$.
      The label for each vertex (resp. edge) is the isomorphism class of the stabilizer $G_v$ (resp. $G_e$) of a representative in $\XOS(G)$ of the vertex (resp. edge).}
    \label{FigureXOSG}
 \end{figure}

 \begin{proposition}\label{HowOrbitsConnectInXOS}
 For each of the groups $G$ in \cref{OSThmA}, we can construct $\XOS(G)$ as in \cref{FigureXOSG}.
 \begin{proof}
 In all cases, the orbit types must be those given by \cref{example3.4,example3.5,example3.7}.
 If $H\leq G$ is cyclic of order $k\leq 4$ then, by \cref{remarkOneConjugacyClassSmallCyclicSubgroups},
 $\XOS{(G)}^H$ must intersect every orbit of cells of type $G/K$, provided that $K$ contains a subgroup isomorphic to $H$.
 Moreover by \cref{AciclicoOVacio}, the graph $\XOS{(G)}^H$ is a tree.
 This imposes some restrictions on how orbits are connected and at the same time gives us freedom to mutate $\XOS(G)$ by applying \cref{lemmaSlidingOneCells}.

 For the groups $G=\PSL_2(q)$, a possible way to connect the orbits is described in~\cite[Section 3]{OS}.
 For each of these groups, the structure in \cref{FigureXOSG} coincides with this one up to an application of \cref{lemmaSlidingOneCells}.
 
 For $G=\Sz(q)$ we give more detail here.
 Let $r=\sqrt{2q}$.
 First note that, since $q-1\nmid 4(q\pm r+1)$, the orbit of type $C_{q-1}$ has to connect $B$ to $D_{2(q-1)}$.
 Now the two orbits of type $C_4$ must connect $B$, $C_{q+r+1}\rtimes C_4$ and $C_{q-r+1}\rtimes C_4$ (in some way).
 The orbit $C_2$ must connect $D_{2(q-1)}$ to one of the other three orbits of vertices.
 Note that, in any case, we can repeatedly use \cref{lemmaSlidingOneCells} to obtain the desired structure.
 \end{proof}
 \end{proposition}


\section{Preliminaries on Lie groups}
\label{SectionLieGroups}

Recall that a \textit{Lie group} $G$ is a smooth manifold with a group structure such that the multiplication $\mu\colon G\times G\to G$, $(x,y)\mapsto xy$ and inversion $i\colon G\to G$, $x\mapsto x^{-1}$ are differentiable.
The group $\U(m)$ of $m\times m$ unitary matrices is a compact and connected $m^2$-dimensional Lie group.
If $G$ is a Lie group, the \textit{Lie algebra} of $G$ is the tangent space $T_e G$ at the identity element $e\in G$.
The \textit{adjoint representation} $\Ad\colon G \to \GL(T_e G)$ is defined by $g\mapsto d_e\Psi_g$ where $\Psi_g\colon G\to G$ is the map given by $h\mapsto ghg^{-1}$.
Every Lie group is parallelizable and hence orientable.
 
 \begin{lemma}\label{differentialMu}
 Let $G$ be a Lie group with multiplication $\mu\colon G\times G\to G$. Then the differential $d_{(p,q)}\mu\colon T_p G\times T_q G \to T_{pq} G$ is given by $(x,y)\mapsto d_p R_q (x) + d_q L_{p} (y)$.
 \begin{proof}
 The differential $d_{(e,e)}\mu \colon T_e G \times T_e G \to T_e G$ is given by $(x,y)\mapsto x+y$ (this is~\cite[Chapter 7, Problem 7-2]{Lee}).
 The general case follows by writing $\mu = L_p R_q \circ \mu \circ (L_{p^{-1}} \times R_{q^{-1}})$.
 \end{proof}

\end{lemma}

\begin{proposition}\label{propProductRule}
  Let $M$ be a manifold, $G$ be a Lie group and $f,g\colon M\to G$ be differentiable maps.
  \begin{enumerate}[label=(\roman*)]
  \item We have the product rule $d_p(f\cdot g) = d_{f(p)} R_{g(p)} \circ d_p f + d_{g(p)} L_{f(p)} \circ  d_p g$.
  
  \item If $f(p)=g(p)=e$, we have  $d_p(f\cdot g) = d_p f + d_p g$.
  
  \item If $g(p)=e$, we have  $d_p (f\cdot g \cdot f^{-1}) = d_{f(p)^{-1}} L_{f(p)} \circ d_{e} R_{f(p)^{-1}} \circ  d_p g $.
  
  \item If $f(p)=e$, we have $d_p f^{-1}=-d_p f$.
  
  \item If $f(p)=g(p)=e$, we have $d_p [f,g] = 0$.
  \end{enumerate}
  \begin{proof}
  These properties follow easily from \cref{differentialMu}.
  \end{proof}
\end{proposition}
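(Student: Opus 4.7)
The plan is to prove (i) by a direct application of the chain rule to \cref{differentialMu}, and then to derive (ii)--(v) as easy consequences. For (i), I write $f\cdot g = \mu\circ (f,g)$ where $(f,g)\colon M\to G\times G$ is $p\mapsto (f(p),g(p))$; combining the chain rule with the formula for $d_{(f(p),g(p))}\mu$ given by \cref{differentialMu} yields the product rule at once. Item (ii) is then immediate, since at $f(p)=g(p)=e$ both translations $L_{f(p)}$ and $R_{g(p)}$ are the identity map on $G$, so their differentials at $e$ act as the identity on $T_e G$. For (iv), I differentiate the constant identity $f\cdot f^{-1}\equiv e$ at $p$; because both factors equal $e$ at $p$, part (ii) applies and forces $d_p f + d_p f^{-1} = 0$.

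For (iii), I apply (i) to the product written as $(f\cdot g)\cdot f^{-1}$. Since $g(p)=e$, a first application of (i) to $f\cdot g$ simplifies to $d_p(f\cdot g) = d_p f + d_e L_{f(p)}(d_p g)$, and a second application at the outer product produces three terms. The two terms carrying $d_p f$ and $d_p f^{-1}$ can be recognised as the expansion via (i) of $d_p(f\cdot f^{-1})$, which vanishes because $f\cdot f^{-1}\equiv e$, so they cancel. The remaining term equals the claimed conjugation formula after noting that $L_{f(p)}\circ R_{f(p)^{-1}} = R_{f(p)^{-1}}\circ L_{f(p)}$ as maps $G\to G$, so the two possible ways of writing $\Ad(f(p))$ on $T_e G$ agree. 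Finally for (v), I write $[f,g] = (fgf^{-1})\cdot g^{-1}$; since $f(p)=e$, part (iii) collapses to $d_p(fgf^{-1}) = d_p g$, and then part (ii) applied to the outer product together with (iv) yields $d_p[f,g] = d_p g + d_p g^{-1} = 0$.

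The whole argument is essentially bookkeeping on top of \cref{differentialMu}; the only step that needs a little care is (iii), where one must identify the cancellation of the two extraneous terms as coming from differentiating $f\cdot f^{-1}\equiv e$ and keep track of whether the ambient point is $e$, $f(p)$, or $f(p)^{-1}$.
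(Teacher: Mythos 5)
Your proof is correct and takes essentially the same route as the paper, which simply asserts that all five items follow from \cref{differentialMu}; you have filled in the bookkeeping (chain rule for (i), cancellation of the $d_pf$ and $d_pf^{-1}$ terms via $f\cdot f^{-1}\equiv e$ for (iii), and the commutation $L_{f(p)}\circ R_{f(p)^{-1}}=R_{f(p)^{-1}}\circ L_{f(p)}$ to match the stated form) correctly.
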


\begin{corollary}\label{DescriptionAd}
 The adjoint representation is given by $\Ad(g)=d L_g \circ d R_{g^{-1}}$.
\end{corollary}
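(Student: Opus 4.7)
The plan is to work directly from the definition $\Ad(g) = d_e \Psi_g$, where $\Psi_g(h) = ghg^{-1}$. The first step is to observe that $\Psi_g$ admits a simple factorization as a composition of two translations: namely, $\Psi_g = L_g \circ R_{g^{-1}}$, since for every $h \in G$ we have $(L_g \circ R_{g^{-1}})(h) = g\cdot(h\cdot g^{-1}) = ghg^{-1}$. Applying the chain rule at $e$, and noting that $R_{g^{-1}}(e) = g^{-1}$, this yields
\[
\Ad(g) = d_e \Psi_g = d_{g^{-1}} L_g \circ d_e R_{g^{-1}},
\]
which is the precise meaning of the shorthand $\Ad(g) = dL_g \circ dR_{g^{-1}}$ in the statement.

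Alternatively, I would derive the identity as a special case of item (iii) of \cref{propProductRule} by taking $M = G$, letting $f \colon G \to G$ be the constant map with value $g$ and letting the second map be the identity $\mathrm{id}_G$, evaluated at $p = e$. Then $d_e\,\mathrm{id}_G$ is the identity on $T_e G$, $f(p)^{-1} = g^{-1}$, and the formula from (iii) collapses directly to the claim.

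There is essentially no obstacle: the result is a one-line application of the chain rule (or of the product rule already established). The only thing to verify with a moment's care is the bookkeeping of base points, so that $d_{g^{-1}} L_g \circ d_e R_{g^{-1}}$ really does define a linear endomorphism of $T_e G$; it does, since $R_{g^{-1}}$ sends $e$ to $g^{-1}$ and $L_g$ then sends $g^{-1}$ back to $e$.
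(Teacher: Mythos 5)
Your proof is correct and matches the paper's intent: the corollary is stated without proof, as an immediate consequence of \cref{propProductRule} (equivalently, of the factorization $\Psi_g = L_g \circ R_{g^{-1}}$ plus the chain rule), which is exactly what you carry out. Both of your routes are valid and the base-point bookkeeping is right.
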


We denote the centralizer of $H$ in $G$ by $\centralizer_G(H)$ and the center of $G$ by $\centerOfGroup(G)$.

\begin{proposition}[{\cite[Chapter III, \S 9, no. 3, Proposition 8]{BourbakiLie}}]\label{LieAlgebraOfCentralizer}
 Let $H$ be a finite subgroup of a Lie group $G$.
 Then the Lie algebra of the centralizer $\centralizer_G( H )$ is obtained by taking the fixed points by $H$ of the adjoint representation of $G$. That is, we have $T_e \centralizer_G( H ) = (T_e G)^H$.
\end{proposition}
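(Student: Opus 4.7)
The plan is to identify $\centralizer_G(H)$ as an embedded Lie subgroup of $G$, and then establish the two inclusions between $T_e\centralizer_G(H)$ and $(T_eG)^H$ separately. Since $H$ is finite, $\centralizer_G(H)=\bigcap_{h\in H}\centralizer_G(h)$ is a closed subgroup of $G$, and hence a Lie subgroup by Cartan's closed subgroup theorem. Its Lie algebra is precisely $T_e\centralizer_G(H)$, regarded as a subspace of $T_eG$.

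For the inclusion $T_e\centralizer_G(H)\subseteq (T_eG)^H$, I would take $X\in T_e\centralizer_G(H)$ and pick a smooth curve $\gamma\colon(-\varepsilon,\varepsilon)\to \centralizer_G(H)$ with $\gamma(0)=e$ and $\gamma'(0)=X$. For every $h\in H$ we have $\Psi_h(\gamma(t))=\gamma(t)$, so differentiating at $t=0$ and using \cref{DescriptionAd}, which gives $d_e\Psi_h=\Ad(h)$, we obtain $\Ad(h)(X)=X$ for every $h\in H$, i.e.\ $X\in (T_eG)^H$.

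For the reverse inclusion, let $X\in (T_eG)^H$. The key computation is the identity $h\exp(tX)h^{-1}=\exp(t\,\Ad(h)(X))$, which follows from the fact that $\Psi_h$ is a Lie group automorphism and hence intertwines the exponential map with its own differential at $e$, which is $\Ad(h)$. Since $\Ad(h)(X)=X$ for every $h\in H$, the one-parameter subgroup $t\mapsto \exp(tX)$ lies entirely in $\centralizer_G(H)$, and its velocity at $t=0$ is $X$. Therefore $X\in T_e\centralizer_G(H)$.

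I do not expect a serious obstacle: the argument is entirely standard once one notices that the exponential map intertwines $\Ad(h)$ with conjugation by $h$. The only conceptual ingredient is this intertwining property, which is what turns the linear algebra condition $\Ad(H)X=X$ into the group-theoretic condition that $\exp(tX)$ commutes with all of $H$. If one wishes to avoid Cartan's theorem, one can instead note that the above construction already shows $\exp$ maps the linear subspace $V=(T_eG)^H$ into $\centralizer_G(H)$ as a local diffeomorphism near $0$, and combine this with the forward inclusion to simultaneously conclude that $\centralizer_G(H)$ is a Lie subgroup of $G$ of dimension $\dim V$ with Lie algebra $V$.
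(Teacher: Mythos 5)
Your argument is correct. Note that the paper does not prove this proposition at all: it is quoted directly from Bourbaki, so there is no in-text proof to compare against. Your two-inclusion argument is the standard one: the forward inclusion by differentiating a curve in the centralizer along $\Psi_h(\gamma(t))=\gamma(t)$, and the reverse inclusion via the naturality identity $\Psi_h\circ\exp=\exp\circ\,\Ad(h)$, with Cartan's closed subgroup theorem supplying the embedded submanifold structure that makes $T_e\centralizer_G(H)$ meaningful. (Incidentally, $d_e\Psi_h=\Ad(h)$ is the paper's definition of $\Ad$ rather than the content of \cref{DescriptionAd}, but that is immaterial.) The only point I would press you on is the closing aside: knowing that $\exp$ carries $V=(T_eG)^H$ into $\centralizer_G(H)$ does not by itself show that $\exp(V)$ fills out a neighbourhood of $e$ in $\centralizer_G(H)$; for that you would need the converse statement that every $g=\exp(Y)\in\centralizer_G(H)$ with $Y$ small has $Y\in V$, which one gets by choosing an $\Ad(H)$-invariant injectivity neighbourhood (here the finiteness of $H$ is genuinely used, e.g.\ by averaging a norm over $H$) and applying injectivity of $\exp$ to $\exp(\Ad(h)Y)=\exp(Y)$. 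Since that route is offered only as an optional variant and your main proof via Cartan is complete, this does not affect the correctness of the proposal.
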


\begin{theorem}\label{centralizersAreConnectedInUnitaryGroup}
 Let $H\leq \U(m)$ be a subgroup. Then $\centralizer_{\U(m)}(H)$ is connected.
 \begin{proof}
 A proof using a simultaneous diagonalization argument is given in~\cite[Proof of Theorem 3.2]{Stacey}. See also~\cite{CentralizersInUnitaryGroup}.
 \end{proof}
\end{theorem}

\begin{proposition}[{\cite[Corollary 21.6]{Lee}}]\label{ActionOfCompactLieGroupIsProper}
Every continuous action by a compact Lie group on a manifold is proper.
\end{proposition}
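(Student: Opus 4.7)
The plan is to verify the definition of a proper action directly, rather than invoking Lee. Recall that an action $\lieGroup \actson M$ is \emph{proper} when the map
\[
\Theta\co \lieGroup\times M \to M\times M,\qquad (g,x)\mapsto (g\cdot x,\, x),
\]
is a proper map, i.e.\ preimages of compact sets are compact. So the task reduces to taking an arbitrary compact $K\subset M\times M$ and showing that $\Theta^{-1}(K)$ is compact.

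The key observation is that $\Theta^{-1}(K)$ sits inside a compact box. Indeed, if $(g,x)\in \Theta^{-1}(K)$ then $x=\pi_2(\Theta(g,x))\in \pi_2(K)$, where $\pi_2\co M\times M\to M$ is projection onto the second factor. Hence
\[
\Theta^{-1}(K)\subset \lieGroup \times \pi_2(K).
\]
Since $\lieGroup$ is compact by hypothesis and $\pi_2(K)$ is compact as the continuous image of a compact set, the product $\lieGroup\times \pi_2(K)$ is compact. The manifold $M\times M$ is Hausdorff, so $K$ is closed; by continuity of $\Theta$, the set $\Theta^{-1}(K)$ is closed in $\lieGroup\times M$ and in particular closed in $\lieGroup\times \pi_2(K)$. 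A closed subset of a compact space is compact, so $\Theta^{-1}(K)$ is compact.

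There is no real obstacle here: the statement is essentially a one-line consequence of the fact that continuous maps from a product with a compact factor behave well under taking preimages of compact sets. The only minor subtlety worth flagging is the use of Hausdorffness of $M\times M$ to conclude that $K$ is closed, which is available because $M$ is assumed to be a manifold. Compactness and smooth structure of $\lieGroup$ play no further role beyond compactness of the underlying topological space, so the argument in fact works for any compact topological group acting continuously on a Hausdorff space.
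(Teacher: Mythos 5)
Your argument is correct and is essentially the standard proof given in the cited reference [Lee, Corollary 21.6]: the paper itself only cites Lee rather than reproving the statement, and your direct verification (containment of $\Theta^{-1}(K)$ in the compact box $\lieGroup\times\pi_2(K)$, plus closedness from Hausdorffness of $M\times M$) is exactly that argument. Your closing observation that only compactness of the group and Hausdorffness of the space are used is also accurate.
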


\begin{theorem}[Quotient Manifold Theorem]\label{QuotientManifoldTheorem}
Suppose $G$ is a Lie group acting smoothly, freely, and properly on a smooth manifold $M$.
Then the orbit space $M/G$ is a topological manifold of dimension equal to $\dim M - \dim G$, and has a unique smooth structure with the property that the quotient map $\pi\colon M\to M/G$ is a smooth submersion.

Moreover, if $M$ is orientable and $G$ is connected, then $M/G$ is orientable.
\begin{proof}
 The first part is~\cite[Theorem 21.10]{Lee}.
 For the second part we fix an orientation on $M$ and $G$.
 Since $G$ is connected, the translations $L_g,R_g\colon G\to G$ and $g\colon M\to M$ are homotopic to the identity map and thus preserve the orientation.
 A tedious but straightforward computation with the charts constructed in the proof of~\cite[Theorem 21.10]{Lee} allows to extract an oriented atlas, showing that $M/G$ is orientable.
\end{proof}
\end{theorem}


\section{A moduli of representations of $\Gamma = \pi_1(\oneComplex,x_0)\unsplitExtension G$}\label{sectionModuli}

If $\oneComplex$ is a connected $G$-graph, there is a group extension
\[1\to \pi_1(\oneComplex,v_0) \xrightarrow{\inclusionBrown} \Gamma \xrightarrow{\phi} G\to 1\]
which is most easily defined by lifting the action of $G$ to the universal cover $\wt{\oneComplex}$ of $\oneComplex$.
In this section we construct a moduli $\MM$ of representations of the group extension $\Gamma$ and study its properties
(note that we are using the word \textit{moduli} in a rather informal way, meaning a geometric object whose points correspond to certain representations of $\Gamma$).
The starting point to construct $\MM$ is a result in Bass--Serre theory due to K.S.~Brown which provides (at the price of making some choices)
a much more concrete description of $\Gamma$ that allows us to work with it~\cite{BrownPresentations}.
\begin{theorem}[Brown]
\label{Brown}
 Let $X$ be obtained from a $G$-graph $\oneComplex$ by attaching $m$ orbits of $2$-cells along (the orbits of) the closed edge paths $\omega_0,\ldots,\omega_k$ based at a vertex $v_0$.
 Then there is a group extension
 \[1\to \pi_1(X,v_0) \xrightarrow{\overline{\inclusionBrown}} \Gamma / \llangle \inclusionBrown({\omega_0}), \ldots, \inclusionBrown({\omega_k})\rrangle \xrightarrow{\overline{\phi}} G\to 1,\]
 where the maps $\overline{\inclusionBrown}$ and $\overline{\phi}$ are given by factoring through the quotient.
\end{theorem}
In order to describe Brown's construction of $\Gamma$ and the maps $\inclusionBrown$ and $\phi$ we need some choices.
By admissibility of the action, the group $G$ acts on the set of oriented edges. 
If $e$ is an oriented edge, the same $1$-cell with the opposite orientation is denoted by $e^{-1}$.
Each oriented edge $e$ has a \textit{source} $s(e)$ and a \textit{target} $t(e)$.
For each $1$-cell of $\oneComplex$ we choose a preferred orientation in such a way that these orientations are preserved by $G$.
This determines a set $P$ of oriented edges.
We choose a \textit{tree of representatives} for $\oneComplex/G$.
That is, a tree $T\subset \oneComplex$ such that the vertex set $V$ of $T$ is a set of representatives of $\oneComplex^{(0)}/G$.
Such tree always exists and the $1$-cells of $T$ are inequivalent modulo $G$.
We give an orientation to the $1$-cells of $T$ so that they belong to $P$.
We also choose a set of representatives $E$ of $P/G$ in such a way that $s(e)\in V$ for every $e\in E$ and such that each oriented edge of $T$ is in $E$.
If $e$ is an oriented edge, the unique element of $V$ that is equivalent to $t(e)$ modulo $G$ will be denoted by $w(e)$.
For every $e\in E$ we fix an element $g_e\in G$ such that $t(e)=g_e\cdot w(e)$.
If $e\in T$, we specifically choose $g_e=1$.
Then
    \[\Gamma = \frac{F(x_e\tq e\in E ) * \BigFreeProd_{v\in V} G_v}{\llangle R \rrangle },\]
where $F(x_e\tq e\in E )$ is the free group with basis $\{x_e\tq e\in E\}$ and $\llangle R \rrangle$ denotes the normal subgroup generated by the set $R$ of relations of the following two types:
\begin{enumerate}[label=(\roman*)]
\item $x_e=1$ if $e\in T$, and

\item $x_e^{-1}\iota_{s(e)}(g) x_e = \iota_{w(e)}(g_e^{-1}gg_e)$ for every $e\in E$ and $g\in G_e$,
\end{enumerate}
where $\iota_v\colon G_v\hookrightarrow F(x_e\tq e\in E ) * \BigFreeProd_{v\in V} G_v$ denotes the canonical inclusion.

Let $\phi\colon \Gamma\to G$ be the map induced by the coproduct of the inclusions $G_v\to G$ and the map $F(x_e\tq e\in E )\to G$ defined by $x_e\mapsto g_e$.
Let $N=\ker(\phi)=i(\pi_1(\oneComplex,v_0))$.
Let $\inclusionStabilizerVertex_v\colon G_v\hookrightarrow \Gamma$ be the canonical inclusion.
We will not give the description of $i$ here, instead we refer to~\cite{BrownPresentations} or to~\cite[Section 4]{A5contractible}.

In the following proposition we use a morphism $\rhoG\colon G\to \lieGroup$ to construct a moduli of representations of $\Gamma$ in the Lie group $\lieGroup$.
This extends the construction in~\cite[Theorem 5.4]{A5contractible}.

\begin{theorem}\label{constructionModuli}
Let $\oneComplex$ be a $G$-graph with the necessary choices to apply \cref{Brown}.
Take a vertex $v_0\in V$ as the root of $T$ and assume the orientation $P$ is taken
so that every edge of $T$ is oriented away from $v_0$.
Let $\Gamma$ be the group given by Brown's result and consider a representation
$\rhoG\colon G\to \lieGroup$ of $G$ in a Lie group $\lieGroup$.
Let $\MM = \displaystyle\prod_{e\in E} \centralizer_{\lieGroup}( \rhoG(G_e) )$.
Suppose $\tau=(\tau_e)_{e\in E}\in \MM$. For $v\in V$, we define
$\tau_v=\tau_{e_k}\tau_{e_{k-1}}\cdots \tau_{e_2}\tau_{e_1}$
where $(e_1,e_2,\ldots,e_k)$ is the unique path from $v_0$ to $v$ by edges in $T$
(with this definition $\tau_{v_0}=\identityLieGroup$).
Then we have a representation $\rho_\tau\colon \Gamma \to \lieGroup$ given by

\begin{align*}
 \rho_\tau( \inclusionStabilizerVertex_v( g ) ) &= \tau_v^{-1} \rhoG(g) \tau_v & \text{ for } v\in V \text{ and } g\in G_v,\\
 \rho_\tau( x_e ) &= \tau_{s(e)}^{-1} \tau_e^{-1} \rhoG( g_e )\tau_{w(e)} & \text{ for } e\in E.
\end{align*}
We thus have a moduli of representations
\begin{align*}
\rho\colon \MM &\to \hom( \Gamma, \lieGroup) \\
\tau&\mapsto \rho_\tau
\end{align*}
Moreover, each word $w\in \Gamma$ induces a differentiable map $W\colon \MM\to \lieGroup$ given by $\tau\mapsto \rho_\tau(w)$.
\begin{proof}
If $e\in T$ then $\tau_{w(e)}=\tau_{t(e)} = \tau_e\tau_{s(e)}$ and $g_e=1$. Therefore $\rho_\tau(x_e)=\identityLieGroup$ and relations of type (i) are satisfied.
Now if $e\in E$, $g\in G_e$ we have
\begin{align*}
\rho_\tau(x_e)^{-1} \rho_\tau( \inclusionStabilizerVertex_{s(e)}(g) )\rho_\tau( x_e ) &= \tau_{w(e)}^{-1}\rhoG(g_e)^{-1} \tau_e \tau_{s(e)} \cdot  \tau_{s(e)}^{-1} \rhoG( g) \tau_{s(e)} \cdot \tau_{s(e)}^{-1}\tau_e^{-1} \rhoG(g_e) \tau_{w(e)}\\
&= \tau_{w(e)}^{-1} \rhoG{(g_e)}^{-1}  \tau_e \rhoG( g ) \tau_e^{-1}\rhoG(g_e)  \tau_{w(e)}\\
&= \tau_{w(e)}^{-1} \rhoG{(g_e)}^{-1} \rhoG(g) \rhoG(g_e) \tau_{w(e)}\\
&= \rho_\tau( \inclusionStabilizerVertex_{w(e)}(g_e^{-1} g g_e) )
\end{align*}
and thus the type (ii) relations
$x_e^{-1} \inclusionStabilizerVertex_{s(e)}( g ) x_{e} = \inclusionStabilizerVertex_{w(e)}(g_e^{-1} g g_e )$
also hold.

Finally, proving that for $w\in \Gamma$ the map $W\colon \MM \to \lieGroup$ is differentiable
reduces to proving that the maps $\tau\mapsto \rho_\tau(i_v(g))$ and $\tau\mapsto \rho_\tau(x_e)$ are differentiable.
This in turn follows from $\tau \mapsto \tau_e$ and $\tau\mapsto \tau_v$ being differentiable.
\end{proof}
\end{theorem}

Different points of $\MM$ may correspond to equal representations of $\Gamma$.
The quotient $\MMbar$ introduced in the following result allows us to deal with this issue.

\begin{theorem}\label{quotientModuli}
Let $\HH = \{(\alpha_v)_{v\in V} \tq \alpha_{v_0}=\identityLieGroup \} \subseteq \displaystyle\prod_{v\in V } \centralizer_\lieGroup(\rhoG(G_v))$.
Assume $\HH$ is compact.
\begin{enumerate}[label=(\roman*)]
\item There is a free right action $\MM\rightAction \HH$ given by
 \[{(\tau \cdot \alpha)}_e = \rhoG(g_e) \alpha_{w(e)}^{-1} \rhoG(g_e)^{-1}   \cdot \tau_e \cdot \alpha_{s(e)}\]
 
\item Moreover $\rho_{\tau} = \rho_{\tau'}$ if and only if $\tau,\tau'$ lie in the same orbit of the action of $\HH$.
 
\item The quotient $\MMbar = \MM/\HH$ is a smooth manifold, the map $p\colon \MM\to\MMbar$ is a smooth submersion and $\dim \MMbar = \dim \MM - \dim \HH$.
 
\item If $\HH$ is connected then $\MMbar$ is orientable.

\item We have an induced map $\rhobar\colon \MMbar \to \hom(\Gamma, \lieGroup)$. Each word $w\in \Gamma$ induces a differentiable map $\overline{W}\colon \MMbar \to \lieGroup$ such that $\overline{W} = W \circ p$.
\end{enumerate}

\begin{proof}
(i) Since $G_{s(e)}\supseteq G_e\subseteq G_{t(e)}$, the fact that the given action is well-defined follows from $ \rhoG(g_e) \alpha_{w(e)}^{-1}\rhoG(g_e)^{-1}\in \centralizer_\lieGroup( \rhoG( G_{t(e)} ))$ which holds since $t(e)= g_e \cdot w(e)$.
 If $(\tau \cdot \alpha)_e  = \tau_e$ for all $e\in T$, by induction (traversing the tree $T$ starting from the root $v_0$) it follows that $\alpha_v=1$ for all $v\in V$.
 Then the action is free.
 
 (ii) Let $\tau\in \MM$, $\alpha\in \HH$. If $e\in T$ then $(\tau\cdot \alpha)_e = \alpha_{t(e)}^{-1} \tau_e \alpha_{s(e)}$.
 If $v\in V$, $(\tau\cdot \alpha)_{v} = \alpha_v^{-1}\tau_v$.
 Then
 \begin{align*}
  \rho_{\tau\cdot \alpha}(\inclusionStabilizerVertex_v(g)) &= (\tau\cdot \alpha)_v^{-1} \rhoG(g)(\tau\cdot \alpha)_v\\
			    &= \tau_v^{-1}\alpha_v\rhoG(g)\alpha_v^{-1}\tau_v \\
			    &= \tau_v^{-1}\rhoG(g)\tau_v \\
			    &= \rho_\tau(\inclusionStabilizerVertex_v(g)).
 \end{align*}
 Moreover, for $e\in E$ we have
\begin{align*}
 \rho_{\tau\cdot \alpha}( x_e ) &= (\tau\cdot \alpha)_{s(e)}^{-1} (\tau\cdot \alpha)_e^{-1} \rhoG( g_e )(\tau\cdot \alpha)_{w(e)} \\
&= (\alpha_{s(e)}^{-1}\tau_{s(e)})^{-1} ( \rhoG(g_e) \alpha_{w(e)}^{-1} \rhoG(g_e)^{-1} \tau_e \alpha_{s(e)})^{-1}\rhoG(g_e)  (\alpha_{w(e)}^{-1}\tau_{w(e)})\\
 &=\tau_{s(e)}^{-1} \tau_e^{-1} \rhoG( g_e )\tau_{w(e)}\\
 &= \rho_{\tau}(x_e).
 \end{align*}
 Then $\rho_\tau = \rho_{\tau\cdot \alpha}$.
 For the other implication, if $\tau,\tau'\in\MM$ satisfy $\rho_\tau = \rho_{\tau'}$, by defining $\alpha_v = \tau_v (\tau'_v)^{-1}$ we obtain a point $\alpha = (\alpha_v)_{v\in V}\in \HH$ and $\tau\cdot \alpha=\tau'$.

 (iii) By \cref{ActionOfCompactLieGroupIsProper}  the action is proper. Then by \cref{QuotientManifoldTheorem},
 the quotient  $\MMbar = \MM/\HH$ has a (unique) smooth manifold structure such that $p\colon \MM\to \MMbar$ is a submersion
 and $\dim \MMbar = \dim \MM - \dim \HH$.
 
 (iv) This follows from the second part of \cref{QuotientManifoldTheorem}.
 
 (v) This follows by passing to the quotient.
\end{proof}
\end{theorem}

\begin{corollary}\label{connectednessOfMAndMbar}
If $\lieGroup=\U(m)$ then $\MM$ and $\MMbar$ are connected and orientable.
\begin{proof}
 In view of \cref{centralizersAreConnectedInUnitaryGroup}, it follows from part (iv) of \cref{quotientModuli}.
\end{proof}
\end{corollary}

A representation $\rho\colon \Gamma \to \lieGroup$ is said to be \textit{universal} if $N\subseteq \ker(\rho)$ (or equivalently, if $\rho$ factors through $\phi$).
Under suitable hypotheses, $\identityMMBar=p(\identityMM)$ is the only point in $\MMbar$ which corresponds to a universal representation:
\begin{proposition}\label{universalIsUnique}
Suppose that $G$ is finite and that each element of $G$ fixes a vertex in $\oneComplex$.
Let $\lieGroup\subseteq \GL_m(\C)$ and assume the restriction $\rhoG|_{G_{v_0}}\colon G_{v_0}\to \lieGroup$ is an irreducible representation of $G_{v_0}$.
Then $\{ \identityMMBar\}=\{ \taubar \in \MMbar \tq \rhobar_{\taubar} \text{ is universal}\}$.
\begin{proof}
 \newcommand{\rhoQuotient}{\wt{\rho}_\tau}  
  First note that $\rhobar_{\identityMMBar} = \rho_{\identityMM} = \rhoG \circ \phi$ is universal.
 Now consider $\tau\in \MM$ such that $\rho_\tau$ is universal.
 By passing to the quotient we have a representation $\rhoQuotient \colon G\to \lieGroup$ such that
 $\rho_{\tau}=\rhoQuotient \circ \phi$.
 Now note that, since each element of $G$ fixes a vertex of $\oneComplex$, from the definition of $\rho_\tau$ it follows that the representations $\rhoG$ and $\rhoQuotient$ have the same character and are therefore isomorphic.
 Hence, we can take $\alpha\in \GL_m(\C)$ such that for all $g\in G$ we have $\alpha\rhoQuotient(g)\alpha^{-1}=\rhoG(g)$.
 Now since for every $g\in G_{v_0}$ we have $\rhoQuotient(g)=\rhoG(g)$, and since $\rhoG|_{G_{v_0}}$ is irreducible,
 by Schur's lemma it follows that $\alpha$ is a scalar matrix and therefore $\rhoQuotient=\rhoG$.
 Then $\rho_\tau = \rho_\identityMM$ and therefore by part (ii) of \cref{quotientModuli}, $p(\tau)=p(\identityMM)$ in $\MMbar$.
\end{proof}

\end{proposition}

\begin{remark}
 If $\rhoG|_{G_{v_0}}\colon G_{v_0}\to \lieGroup$ is not irreducible, we could still consider the quotient $\MMbarbar$ of $\MMbar$ by the action of $\centralizer_\lieGroup(\rhoG(G_{v_0}))$.
 In this case, the points in $\MMbarbar$ correspond to characters (not representations) of $\Gamma$ and the image of the induced map $\overline{\Wbar}$ is only defined up to conjugation by $\centralizer_\lieGroup(\rhoG(G_{v_0}))$.
 Note that the quotient of $\lieGroup$ by the conjugation action of $\centralizer_\lieGroup(\rhoG(G_{v_0}))$ is not, in general, a manifold.
\end{remark}

The following result relates a closed edge path $\omega\in \oneComplex$ to the differential at $\identityMM$ of the map $\MM\to \lieGroup$ induced by the word $i(\omega)\in \Gamma$.

\begin{theorem}\label{differentialWordInducedFromClosedEdgePath}
 Let $\oneComplex$ be a $G$-graph (with the necessary choices to form $\MM$).
 Consider a closed edge path $\omega = (a_1 e_1^{\varepsilon_1}, \ldots, a_n e_n^{\varepsilon_n})$ in $\oneComplex$, based at $v_0$, with $e_i\in E$, $a_i\in G$ and $\varepsilon_i\in\{1,-1\}$.
 Let $w=\inclusionBrown(\omega) \in N=\ker(\phi)$.
 Let $W\colon \MM\to \lieGroup$ be the induced differentiable map.
 Let $\identityMM =(\identityLieGroup)_{e\in E} \in \MM$ and consider the inclusion $j_e\colon \centralizer_\lieGroup( \rhoG(G_e) ) \hookrightarrow \lieGroup$.
 Then, with the identification $T_\identityMM \MM\simeq \displaystyle\bigoplus_{e\in E} T_{\identityLieGroup} \centralizer_\lieGroup(\rhoG( G_e) )$ we have
    \[\displaystyle d_\identityMM  W = -\sum_{i=1}^n \varepsilon_i\cdot  d_{\rhoG{(a_i)}^{-1}} L_{\rhoG(a_i)}\circ d_\identityMM  R_{\rhoG(a_i)^{-1}} \circ d_\identityMM  j_{e_i}.\]
 
 \begin{proof}
 By the definition of $\inclusionBrown\colon \pi_1(\oneComplex)\to \Gamma$ (see~\cite{BrownPresentations} or~\cite[Section 4]{A5contractible}) we can write
  \[w = \inclusionStabilizerVertex_{v_0}(h_1) \cdot x_{e_1}^{\varepsilon_1} \cdot \inclusionStabilizerVertex_{v_1}(h_2) \cdot x_{e_2}^{\varepsilon_2}\cdots  \inclusionStabilizerVertex_{v_{n-1}}(h_{n}) \cdot x_{e_n}^{\varepsilon_n} \cdot \inclusionStabilizerVertex_{v_0}(g_1g_2\cdots g_{n})^{-1}\]
  so that for each $i$ we have $g_i=h_{i}g_{e_i}^{\varepsilon_i}$ and 
    \[a_i=\begin{cases}
    g_1\cdots g_{i-1}h_i & \text{ if } \varepsilon_i = 1 \\
    g_1\cdots g_{i-1}h_i g_{e_i}^{-1} & \text{ if } \varepsilon_i = -1.
\end{cases}\]
  Then 
  \begin{align*}
W( \tau ) &=\left( \prod_{i=1}^n (\tau_{v_{i-1}}^{-1}\rhoG(h_i)\tau_{v_{i-1}}) (\tau_{s(e_{i})}^{-1} \tau_{e_i}^{-1}\rhoG(g_{e_i}) \tau_{w(e_i)} )^{\varepsilon_i}\right) \tau_{v_0}^{-1}\rhoG(g_1g_2\cdots g_{n})^{-1}\tau_{v_0}  \\
 &=\left( \prod_{i=1}^n \rhoG(h_i) (\tau_{e_i}^{-1}\rhoG(g_{e_i}))^{\varepsilon_i}\right)\rhoG{(g_1g_2\cdots g_{n})}^{-1}.
  \end{align*}
  In the last equality we used that $\tau_{v_0}=1$ and that $s(e_i)$ and $w(e_i)$ are (in some order which depends on $\varepsilon_i$) $v_{i-1}$ and $v_{i}$.
  We have $P_i(\identityMM)=\rhoG(a_i)$ where $P_i$ is the prefix of $W$ ending just before the occurrence of $\tau_{e_i}^{-\varepsilon_i}$.
  Note that, since $W(\identityMM)=\identityLieGroup$, if $S_i$ is the suffix of $W$ starting just after the occurrence of $\tau_{e_i}^{-\varepsilon_i}$, we have $S_i(\identityMM)=\rhoG(a_i)^{-1}$.
  To conclude, we apply the product rule \cref{propProductRule}.
 \end{proof}
\end{theorem}

In what follows $\llangle w_0,\ldots, w_k\rrangle^{\Gamma}$ denotes the normal closure of $\{w_0,\ldots, w_k\}$ in $\Gamma$.

\begin{lemma}[cf. {\cite[Lemma 6.7]{A5contractible}}]\label{theoremDegreeW}
Let $\Gamma$ be a group, $\lieGroup$ be a Lie group, $M$ be a differentiable manifold, and $\rho\colon M\to \hom(\Gamma, \lieGroup)$ be a function such that for each $w\in \Gamma$ the mapping $W\colon M\to \lieGroup$ defined by $W(z)=\rho(z)(w)$ is differentiable.
Let $N\triangleleft \Gamma$ be a normal subgroup and suppose that $p\in M$ is such that $\rho(p)(w)=\identityLieGroup$ for each $w\in N$.
Then for any elements $w_0,\ldots, w_k\in N$ and  $x_0,\ldots, x_k\in \llangle w_0,\ldots, w_k\rrangle^{\Gamma} [N,N]$ we have
$\rk\, d_p \WW\geq \rk\, d_p \XX$, where 
$\WW=(W_0,\ldots,W_k)$ and $\XX=(X_0,\ldots, X_k)$ are the induced maps $M\to \lieGroup^{k+1}$.
\begin{proof}
For each $j=0,\ldots,k$ we consider numbers $a_j, \ell_j\in \N_0$,
elements $u_{j,1},\ldots, u_{j,a_j}, \break v_{j,1},\ldots, v_{j,a_j}\in N$,
elements $p_{j,1},\ldots p_{j,\ell_j}\in \Gamma$,
indices $\alpha_{j,1},\ldots, \alpha_{j,{\ell_j}}\in \{0,\ldots, k\}$
and signs $\varepsilon_{j,1},\ldots,\varepsilon_{j,\ell_j}\in\{1,-1\}$ such that
\[x_j  = \left(\prod_{s=1}^{\ell_j} p_{j,s} w_{\alpha_{j,s}}^{\varepsilon_{j,s}} p_{j,s}^{-1}\right) \prod_{i=1}^{a_j}[u_{j,i},v_{j,i}].\]
Then the induced maps $M\to\lieGroup$ satisfy
\[X_j = \left(\prod_{s=1}^{\ell_j} P_{j,s} W_{\alpha_{j,s}}^{\varepsilon_{j,s}} P_{j,s}^{-1}\right) \prod_{i=1}^{a_j}[U_{j,i},V_{j,i}] \]
and using \cref{propProductRule} we obtain
\[d_{p} X_j = \sum_{s=1}^{\ell_j}  \varepsilon_{j,s} \cdot d_{P_{j,s}{(p)}^{-1}} L_{P_{j,s}(p)} \circ d_e R_{ P_{j,s}{(p)}^{-1} } \circ d_{p} W_{\alpha_{j,s}}.\]
To conclude, note that we have shown there is an $\R$-linear endomorphism $A$ of $T_\identityLieGroup(\lieGroup^{k+1})$ such that
$ d_{p} \XX = A\circ d_{p}\WW$.
\end{proof}

\end{lemma}

We now prove some results that will be used later to obtain homotopies between maps $\MMbar\to \lieGroup$.
We obtain these homotopies from homotopies $\MM\times I \to \lieGroup$ that are $\HH$-equivariant.

\begin{lemma}\label{lemmaRemovingConstantsGeneralization}
 Let $w,w'\in \Gamma$ and let $g\in G_v$ for some $v\in V$.
 Then the maps $\MM \to \lieGroup$ induced by the words $ww'$ and $w i_v(g) w'$ are homotopic.
 Moreover, if $\centralizer_\lieGroup(\centralizer_\lieGroup(\rhoG(G_v)))$ is connected then the same holds for the induced maps $\MMbar\to \lieGroup$.
 \begin{proof}
  Let $W,W'\colon\MM\to\lieGroup$ be the maps induced by $w$ and $w'$ respectively.
  Let $\gamma\colon I\to \lieGroup$ be a path with $\gamma(0)=\identityLieGroup$, $\gamma(1)=\rhoG(g)$.
  The following map
  \begin{align*}
   H\colon \MM \times I &\to \lieGroup\\
   (\tau,t)&\mapsto  W(\tau) \tau_v^{-1} \gamma(t) \tau_v W'(\tau)
  \end{align*}
  is a homotopy between the maps $\MM\to \lieGroup$ induced by $ww'$ and $w i_v(g) w'$.
  Moreover, since $\rhoG(g)\in \centralizer_\lieGroup(\centralizer_\lieGroup(\rhoG(G_v)))$,
  we can take $\gamma(I)\subseteq \centralizer_\lieGroup(\centralizer_\lieGroup(\rhoG(G_v)))$ if the latter is connected and in this case the following computation 
  \begin{align*}
    H(\tau\cdot \alpha,t) &= W(\tau\cdot \alpha) {(\tau\cdot \alpha)}_v^{-1}\gamma(t) {(\tau\cdot \alpha)}_v W'(\tau\cdot \alpha) \\
    &= W(\tau) {(\tau\cdot \alpha)}_v^{-1}\gamma(t) {(\tau\cdot \alpha)}_v W'(\tau) \\
    &= W(\tau) {(\alpha_v^{-1}\tau_v)}^{-1}\gamma(t) (\alpha_v^{-1}\tau_v) W'(\tau) \\
    &= W(\tau) \tau_v^{-1} \alpha_v\gamma(t) \alpha_v^{-1}\tau_v W'(\tau) \\
    &= W(\tau) \tau_v^{-1} \gamma(t) \tau_v W'(\tau) \\
    &= H(\tau,t)
  \end{align*}
  shows $H$ is $\HH$-equivariant, giving a homotopy between the induced maps $\MMbar\to \lieGroup$.
  \end{proof}
\end{lemma}

In the following two propositions we use the notation $\displaystyle\prod_{i=\ell}^1 b_i = b_\ell b_{\ell-1} b_{\ell-2}\cdots b_2 b_1$.

\begin{proposition}\label{homotopyFromPathsInCentralizers}
 Let $\eta\in E-T$ and let $(e_1,\ldots, e_k)$ and $(e'_1,\ldots, e'_\ell)$ be the unique paths in $T$ from $v_0$ to $s(\eta)$ and $w(\eta)$ respectively (see \cref{figureY0}).
 Suppose that $\gamma_0,\ldots,\gamma_k,\beta_0,\ldots,\beta_\ell \colon I\to \lieGroup$ are paths such that:
\begin{itemize}
 \item For $i=1,\ldots,k$ and for every $t\in I$, $\gamma_i(t)$ commutes with $\centralizer_\lieGroup(\rhoG(G_{t(e_i)}))$.
 \item For $i=1,\ldots,\ell$ and for every $t\in I$, $\beta_i(t)$ commutes with $\centralizer_\lieGroup(\rhoG(G_{t(e'_i)}))$.
\end{itemize}
Then there is an $\HH$-equivariant homotopy $F\colon \MM\times I \to \lieGroup$ defined by
\[F(\tau,t) = \gamma_0(t)  \left(\prod_{i=1}^k \tau_{e_i}^{-1} \gamma_i(t) \right)  \tau_{\eta}^{-1} \rhoG(g_{\eta})  \left( \prod_{i=\ell}^1 \beta_i(t) \tau_{e'_i}\right)  \beta_0(t)\]
Moreover, if $\gamma_i(0)=\identityLieGroup$ for $i=0,\ldots, k$ and $\beta_i(0)=\identityLieGroup$ for $i=0,\ldots, \ell$ then $F_0=X_{\eta}$ where $X_{\eta}$ is the map induced by $x_\eta$.
\begin{proof}
 
 \begin{figure}[ht]
  \centering
    \begin{tikzpicture}[dot/.style = {circle, fill, minimum size=#1, inner sep=0pt, outer sep=0pt},dot/.default = 6pt, scale=0.5]
      \path (-3,0) node[dot](X0){};
      \path (0,0) node[dot](X1){};
      \path (2,1.5) node[dot](X2){};
      \path (5,2) node[dot](X3){};
      \path (7,4.5) node[dot](X4){};
      \path (3,-1.5) node[dot](X5){};
      \path (6,-2) node[dot](X6prime){};
      \path (8,-2) node[dot](X6){};
      \path (10,-3) node[dot](X7){};
      \path (10.5,-2.5) node[dot](gX7){};
      
      \begin{scope}[thick, decoration={ markings, mark=at position 0.5 with {\arrow{>}}},] 
      \draw (X0.center) edge[postaction={decorate}]node[midway,above ]{$e_1$} (X1.center);
      \draw (X0.center) edge[postaction={decorate}]node[midway,below left ]{$e'_1$} (X1.center);
      
      \draw (X1.center) edge[postaction={decorate}]node[midway,above left ]{$e_2$} (X2.center);
      \draw (X2.center) edge[postaction={decorate},color=gray] node[midway,above ]{$\cdots$} (X3.center);
      \draw (X3.center) edge[postaction={decorate}]node[midway,above left ]{$e_k$} (X4.center);
      \draw (X1.center) edge[postaction={decorate}]node[midway,below ]{$e'_2$} (X5.center);
      \draw (X5.center) edge[postaction={decorate},color=gray] node[midway,below ]{$\cdots $} (X6prime.center);
      \draw (X6prime.center) edge[postaction={decorate}] node[midway,below ]{$e'_{\ell-1}$} (X6.center);
      \draw (X6.center) edge[postaction={decorate}] node[midway,below ]{$e'_\ell$} (X7.center);
	\draw (X4) edge[postaction={decorate}, out=-40,in=80, looseness=1]node[midway,right]{$\eta$} (gX7);
      \end{scope}
      \node [dot] at (X0) {}; 
      \node[left] at (X0) {$v_0$};
      \node [dot] at (X1) {}; 
      \node [dot] at (X2) {}; 
      \node [dot] at (X3) {}; 
      \node [above] at (X4) {};
      \node [dot] at (X5) {}; 
      \node[dot] at (X6) {}; 
      \node[dot] at (X6prime) {}; 
      \node[right] at (gX7) {$\, t(\eta)$}; 
      \node[below right] at (X7) {$t(e'_\ell)=w(\eta)$}; 
    \end{tikzpicture}
  \caption{The paths in \cref{homotopyFromPathsInCentralizers,inducedMapIsHomotopicToMapWithFactorsConjugated}. Note that $t(e'_\ell)=w(\eta)=g_{\eta}^{-1}\cdot t(\eta)$. Also note that we may have $k=0$ or $\ell=0$.
  }\label{figureY0}
\end{figure}
  The following computation shows that $F$ is $\HH$-equivariant.
  \begin{align*}
F(\tau\cdot \alpha,t) &=  \gamma_0(t)  \left(\prod_{i=1}^k (\tau\cdot \alpha)_{e_i}^{-1} \gamma_i(t) \right)  (\tau\cdot \alpha)_{\eta}^{-1} \rhoG(g_{\eta})  \left( \prod_{i=\ell}^1 \beta_i(t) (\tau\cdot \alpha)_{e'_i}\right)  \beta_0(t)\\
   &=  \gamma_0(t)  \left(\prod_{i=1}^k \alpha_{s(e_i)}^{-1}\tau_{e_i}^{-1} \alpha_{t(e_i)} \gamma_i(t) \right)   \alpha_{s(\eta)}^{-1}\tau_{\eta}^{-1}  \rhoG(g_\eta) \alpha_{w(\eta)} \rhoG(g_\eta)^{-1} \\
   &\qquad \cdot \rhoG(g_{\eta})  \left( \prod_{i=\ell}^1 \beta_i(t) \alpha_{t(e'_i)}^{-1}\tau_{e'_i}\alpha_{s(e'_i)} \right)  \beta_0(t)\\
   &=  \gamma_0(t)  \left(\prod_{i=1}^k\tau_{e_i}^{-1} \gamma_i(t) \right) \tau_{\eta}^{-1}  \rhoG(g_\eta)  \left( \prod_{i=\ell}^1 \beta_i(t) \tau_{e'_i} \right)  \beta_0(t)\\
   &= F(\tau,t).
  \end{align*}

For the second part, note that
  \begin{align*}
  X_{\eta}(\tau)&=\rho_\tau( x_\eta ) \\ &= \tau_{s(\eta)}^{-1}\tau_{\eta}^{-1} \rhoG(g_\eta)\tau_{w(\eta)} \\
  &= \left(\prod_{i=1}^k \tau_{e_i}^{-1}\right)  \tau_{\eta}^{-1} \rhoG(g_{\eta})  \prod_{i=\ell}^1 \tau_{e'_i}.
  \end{align*}

 \end{proof}
\end{proposition}

\begin{proposition}\label{inducedMapIsHomotopicToMapWithFactorsConjugated}
Suppose that $\centralizer_{\lieGroup}(\centralizer_\lieGroup(\rhoG(G_{v})))$ is connected for each $v\in V$.
Let $\eta\in E-T$ and let $(e_1,\ldots, e_k)$ and $(e'_1,\ldots, e'_\ell)$ be the unique paths in $T$ from $v_0$ to $s(\eta)$ and $w(\eta)$ respectively.
  Let $A_e\in \lieGroup$ be elements defined for every $e\in E$.
  Suppose that $C_1,\ldots,C_k,B_1,\ldots,B_\ell \in \lieGroup$ satisfy:
\begin{itemize}
    \item $A_{e_i}^{-1} C_i A_{e_{i+1}}$ commutes with $\centralizer_\lieGroup(\rhoG(G_{t(e_i)}))$ for $i=1,\ldots,k-1$.
   \item $A_{e_k}^{-1} C_k A_{\eta}$ commutes with $\centralizer_\lieGroup(\rhoG(G_{t(e_k)}))$.
   \item $A_{e'_{i+1}}^{-1} B_{i} A_{e'_i}$ commutes with $\centralizer_\lieGroup(\rhoG(G_{t(e'_i)}))$  for $i=1,\ldots,\ell-1$.
   \item $\rhoG(g_\eta)^{-1} A_{\eta}^{-1}\rhoG(g_{\eta}) B_\ell A_{e'_\ell}$  commutes with $\centralizer_\lieGroup(\rhoG(G_{w(\eta)}))$.
\end{itemize}

Then there is an $\HH$-equivariant homotopy between the map $X_{\eta}\colon \MM\to \lieGroup$ induced by $x_\eta$ and the map $Z\colon \MM\to \lieGroup$ defined by
\[Z(\tau) = \left(\prod_{i=1}^k A_{e_i}\tau_{e_i}^{-1}A_{e_i}^{-1}C_i \right)  A_{\eta}\tau_{\eta}^{-1}A_{\eta}^{-1} \rhoG(g_{\eta})  \left( \prod_{i=\ell}^1 B_i A_{e'_i}\tau_{e'_i} A_{e'_i}^{-1}\right)_.\]

\begin{proof}
 Since the centralizers $\centralizer_{\lieGroup}(\centralizer_\lieGroup(\rhoG(G_v)))$ are connected, we can take paths:
  \begin{itemize}
   \item $\identityLieGroup \xrightarrow{\gamma_0} A_{e_1}$, in $\lieGroup$.
   \item $\identityLieGroup \xrightarrow{\gamma_i} A_{e_i}^{-1} C_i A_{e_{i+1}}$ such that $\gamma_i(I)$ commutes with $\centralizer_\lieGroup(\rhoG(G_{t(e_i)}))$ for $i=1,\ldots,k-1$.
   \item $\identityLieGroup \xrightarrow{\gamma_k} A_{e_k}^{-1} C_k A_{\eta}$ such that $\gamma_k(I)$ commutes with $\centralizer_\lieGroup(\rhoG(G_{t(e_k)}))$.
   \item $\identityLieGroup \xrightarrow{\beta_\ell} \rhoG{(g_\eta)}^{-1} A_{\eta}^{-1}\rhoG(g_{\eta}) B_\ell A_{e'_\ell}$  such that $\beta_l(I)$ commutes with $\centralizer_\lieGroup(\rhoG(G_{w(\eta)}))$.
   \item $\identityLieGroup \xrightarrow{\beta_i}  A_{e'_{i+1}}^{-1} B_{i} A_{e'_i}$ such that $\beta_i(I)$ commutes with $\centralizer_\lieGroup(\rhoG(G_{t(e'_i)}))$  for $i=1,\ldots,\ell-1$.
   \item $\identityLieGroup \xrightarrow{\beta_0} A_{e'_1}^{-1}$, in $\lieGroup$.
  \end{itemize} 
  The result now follows from \cref{homotopyFromPathsInCentralizers}.
\end{proof}
\end{proposition}


\section{Choices and notation for graph $\XOS(G)$}

Now we shall fix, for each of the groups $G$ in \cref{thmB}, our choices regarding $\XOSk(G)$ in order to apply Brown's result to it.
By \cref{HowOrbitsConnectInXOS}, we can assume that in each case the orbits are connected as in \cref{FigureXOSG}.
Our choices in each case are the following (the stabilizers are recorded in \cref{TableStabilizersVerticesOSGraph,TableStabilizersEdgesOSGraph}).
\begin{itemize}
\item For $G=\PSL_2(2^n)$
we take $V=\{v_0,v_1,v_2\}$, $E=\{\eta_0,\eta_1,\eta_2,\allowbreak\eta'_1,\ldots,\eta'_k\}$, and $T=\{\eta_0,\eta_1\}$, with
$v_0\xrightarrow{\eta_0}v_1$, $v_1\xrightarrow{\eta_1}v_2$, $v_2\xrightarrow{\eta_2} g_{\eta_2} v_0$  and $v_0\xrightarrow{\eta'_i} v_0$ for $i=1,\ldots, k$.

\item For $G=\PSL_2(q)$ with $q=3^n$ or $q\equiv 19\pmod {24}$
we take $V=\{v_0,v_1,v_2,v_3\}$, $E=\{\eta_0,\eta_1,\eta_2,\eta_3,\allowbreak\eta'_1,\ldots,\eta'_k\}$, and $T=\{\eta_0,\eta_1,\eta_2\}$, with
$v_0\xrightarrow{\eta_0}v_1$, $v_1\xrightarrow{\eta_1}v_2$, $v_2\xrightarrow{\eta_2}  v_3$, $v_3\xrightarrow{\eta_3} g_{\eta_3} v_0$  and $v_0\xrightarrow{\eta'_i} v_0$ for $i=1,\ldots, k$.

\item For $G=\PSL_2(q)$ with $q\equiv 11\pmod {24}$
we take $V=\{v_0,v_1,v_2,v_3\}$, $E=\{\eta_0,\eta_1,\eta_2,\eta_3,\allowbreak\eta'_1,\ldots,\eta'_k\}$, and $T=\{\eta_0,\eta_1,\eta_2\}$, with
$v_0\xrightarrow{\eta_0}v_1$, $v_1\xrightarrow{\eta_1}v_2$, $v_2\xrightarrow{\eta_2}  v_3$, $v_3\xrightarrow{\eta_3} g_{\eta_3} v_2$  and $v_0\xrightarrow{\eta'_i} v_0$ for $i=1,\ldots, k$.

\item For $G=\Sz(q)$
we take $V=\{v_0,v_1,v_2, v_3\}$, $E=\{\eta_0,\eta_1,\eta_2,\eta_3,\allowbreak \eta'_1,\ldots,\eta'_k\}$, and $T=\{\eta_0,\eta_1,\eta_2\}$, with
$v_0\xrightarrow{\eta_0}v_1$, $v_1\xrightarrow{\eta_1}v_2$, $v_2\xrightarrow{\eta_2} v_3$, $v_3\xrightarrow{\eta_3} g_{\eta_3}v_0$  and $v_0\xrightarrow{\eta'_i} v_0$ for $i=1,\ldots, k$.
\end{itemize}
In all cases and for $i=1,\ldots,k$ we set $g_{\eta'_i}=1$.
Note that in all cases the stabilizer of $v_0$ is a Borel subgroup of $G$.
In what follows $\Gamma_k=\pi_1(\XOSk(G),v_0)\unsplitExtension G$ is the group obtained by applying Brown's result to the action of $G$ on $\XOSk(G)$ with these choices.

\renewcommand{\arraystretch}{1.25}
\begin{table}[H]
  \centering
 \begin{tabular}{|c|c|c|c|c|c|}
   \hline
   $G$ & $q$ & $G_{v_0}$ & $G_{v_1}$ & $G_{v_2}$ & $G_{v_3}$  \\
   \hline
   $\PSL_2(q)$ & $2^n$ &$B=\F_q\rtimes C_{q-1}$ & $D_{2(q-1)}$ & $D_{2(q+1)}$ &- \\
   \hline
   $\PSL_2(q)$ & $q\equiv 3\pmod 8$ & $B=\F_q\rtimes C_{(q-1)/2}$ & $D_{q-1}$ & $D_{q+1}$ & $A_4$   \\
   \hline
   $\Sz(q)$ & $2^n$ & $B=M(q,\theta)$ & $D_{2(q-1)}$ & $C_{q+r+1}\rtimes C_4$ & $C_{q-r+1}\rtimes C_4$ \\
   \hline  
  \end{tabular}
  \caption{Stabilizers of vertices for the $G$-graph $\XOSk(G)$}
 \label{TableStabilizersVerticesOSGraph}
 \end{table}
 
 \begin{table}[H]
  \centering
 \begin{tabular}{|c|c|c|c|c|c|c|}
   \hline
   $G$ & $q$  & $G_{\eta_0}$ & $G_{\eta_1}$ & $G_{\eta_2}$ & $G_{\eta_3}$ & $G_{\eta'_i}$\\
   \hline
   $\PSL_2(q)$ & $2^n$ & $C_{q-1}$ & $C_2$& $C_2$ & - & $1$\\
   \hline
   $\PSL_2(q)$ & $q\equiv 3 \pmod 8$ & $C_{(q-1)/2}$ & $C_2$ & $C_2\times C_2$ & $C_3$ & $1$  \\
   \hline
   $\Sz(q)$ & $2^n$ &  $C_{q-1}$& $C_2$ &$C_4$ & $C_4$  & $1$\\
   \hline  
  \end{tabular}
   \caption{Stabilizers of edges for the $G$-graph $\XOSk(G)$}
 \label{TableStabilizersEdgesOSGraph}
 \end{table}
 \renewcommand{\arraystretch}{1}


\section{Representations and centralizers}
\label{sectionCentralizers}
In this section we obtain, for each of the groups $G$ in \cref{thmB}, a suitable irreducible representation $\rhoG$ of $G$ in $\lieGroup=\U(m)$.
The value of $m$ for each case is recorded in Table~\ref{tableDimensionRho}.
We start by recalling the following classical results

\renewcommand{\arraystretch}{1.25}
\begin{table}[ht]
 \centering
\begin{tabular}{|c|c|c|}
\hline
 $G$ &   $q$ & $m$ \\
 \hline
 $\PSL_2(q)$  &$2^n$ &$q-1$ \\
 \hline
 $\PSL_2(q)$  &$3^n$ with $n$ odd & $(q-1)/2$ \\
 \hline
 $\PSL_2(q)$  & $q\equiv 11\text{ or }19\pmod{24}$ & $(q-1)/2$ \\
 \hline
 $\Sz(q)$ & $2^n$ with $n$ odd & $(q-1){\sqrt{2q}}/2$ \\
 \hline
\end{tabular}
\caption{The degree $m$ of $\rhoG$ in each case.}
\label{tableDimensionRho}
\end{table}
\renewcommand{\arraystretch}{1}

\begin{theorem}[{\cite[Theorem 4.6.2]{IntroRepTheory}}]\label{RepresentationIsEquivalentToUnitaryRepresentation}
 Every representation $\rho\colon G\to \GL_n(\C)$ of a finite group $G$ is isomorphic to a unitary representation $\wt{\rho}\colon G\to \U(n)$.
\end{theorem}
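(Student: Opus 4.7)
The plan is to use the classical unitarian trick (averaging a Hermitian form over the finite group). Start with any Hermitian inner product on $\C^n$, for instance the standard one $\langle v,w\rangle = \sum_i v_i \overline{w_i}$, and average it over the group by setting
$$\langle v,w\rangle_G = \frac{1}{|G|}\sum_{g\in G} \langle \rho(g) v, \rho(g) w\rangle.$$
The sum is finite because $G$ is finite. A direct verification shows that $\langle\cdot,\cdot\rangle_G$ is again a Hermitian inner product on $\C^n$: sesquilinearity is inherited term-by-term, and positive-definiteness follows because each summand is non-negative and $\langle v,v\rangle_G = 0$ forces $\langle \rho(e) v, \rho(e) v\rangle = 0$, hence $v=0$.

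Next I would check $G$-invariance: for any $h\in G$,
$$\langle \rho(h) v, \rho(h) w\rangle_G = \frac{1}{|G|} \sum_{g\in G} \langle \rho(gh) v, \rho(gh) w\rangle = \langle v,w\rangle_G$$
by re-indexing the sum via $g\mapsto gh^{-1}$. Thus each $\rho(h)$ is an isometry with respect to $\langle\cdot,\cdot\rangle_G$.

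Finally, I would pass to an orthonormal basis for the new form. Concretely, choose a basis of $\C^n$ that is orthonormal with respect to $\langle\cdot,\cdot\rangle_G$, and let $P\in \GL_n(\C)$ be the change of basis matrix from this basis to the standard one. Then the representation $\wt\rho(g) = P^{-1}\rho(g) P$ preserves the standard Hermitian inner product (since $\rho$ preserves $\langle\cdot,\cdot\rangle_G$, which in the new basis is the standard form), so $\wt\rho(g)\in \U(n)$ for all $g\in G$. This gives the required isomorphism $\wt\rho\cong\rho$.

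There is no real obstacle here; the only subtlety worth mentioning is that the argument uses finiteness of $G$ only through the existence of the normalized average, and would generalize to any compact group via Haar measure, but for the statement at hand the finite sum suffices.
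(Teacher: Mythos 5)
Your proof is correct and is the standard averaging (unitarian trick) argument; the paper does not prove this statement itself but cites it to a textbook, where the same averaging-over-$G$ construction is used. Nothing to add.
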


\begin{theorem}
\label{theoremMalcev}
 Let $G$ be a finite group.
 If two unitary representations of $G$ are isomorphic then there is a unitary isomorphism between them.
  \begin{proof}
   When the representations are irreducible this is~\cite[Lemma 33.1]{DornhoffPartA}.
   For a proof in the general case see~\cite{UnitaryEquivalenceMO}.
  \end{proof}
\end{theorem}

If $A,A'$ are matrices then $A\oplus A'$ denotes the block diagonal matrix $\begin{pmatrix}A & 0 \\ 0 & A'\end{pmatrix}$.
If $\rho,\rho'$ are representations of a group $G$ then $\rho\oplus \rho'$ denotes the representation such that $(\rho\oplus\rho')(g)= \rho(g)\oplus\rho'(g)$ for all $g\in G$.
We denote the $n\times n$ identity matrix by $I_n$.
It is easy to verify that block scalar matrices commute with scalar block matrices:
\begin{proposition}\label{ScalarBlockMatricesCommuteWithBlockScalarMatrices}
Let $X\in M_n(\C)$ and $\lambda \in M_k(\C)$ be two matrices.
Let $A=X\oplus\cdots \oplus X\in M_{kn}(\C)$ and let $B\in M_{kn}(\C)=M_k(M_n(\C))$ be the matrix defined by $B_{i,j}=\lambda_{i,j}I_n$.
Then $A$ and $B$ commute.
\end{proposition}

\begin{remark}\label{remarkSchur}
Let $\rho_1,\ldots,\rho_k$ be pairwise non-isomorphic irreducible representations of a finite group $G$ and let $n_1,\ldots,n_k$ be natural numbers. 
Consider the representation $\rho = \bigoplus_{i=1}^k \rho_i^{n_i}$,
where $\rho_i^{n_i}$ denotes the sum $\rho_i \oplus \cdots \oplus \rho_i$ of $n_i$ copies of $\rho_i$.
Let $d_i$ be the degree of $\rho_i$ and let $n=\sum_{i=1}^k d_i n_i$ be the degree of $\rho$.
Then, by Schur's lemma, we have
\[\centralizer_{\U(n)}( \rho(G)) = \prod_{i=1}^k \centralizer_{ \U(d_in_i) }\left( \rho_i^{n_i} \right),\]
where the product on the right is included in $\U(n)$ as block diagonal matrices.
Again by Schur's lemma, we have an isomorphism
$\U(n_i)\xrightarrow{\,\, \groupIso\,\, } \centralizer_{ \U(d_in_i) }\left( \rho_i^{n_i} \right)$
which is given by $A\mapsto \wt{A}$, where $\wt{A}\in M_{n_id_i}(\C)=M_{n_i}(M_{d_i}(\C))$ is the scalar block matrix defined by $\wt{A}_{s,t}= A_{s,t}I_{d_i}$, which is in fact unitary.
Then $\centralizer_{\U(n)}( \rho(G))\groupIso \prod_{i=1}^k \U(n_i)$ and in particular we have $\dim \centralizer_{\U(n)}(\rho(G))=\sum_{i=1}^k n_i^2$.
\end{remark}

\begin{lemma}\label{lemmaDimensionCentralizerU}
Let $G$ be a finite group and let $\rho\colon G\to \U(n)$ be a unitary representation with character $\chi$.
Then $\dim\ \centralizer_{\U(n)}(\rho(G))  ={\left\langle \chi, \chi \right\rangle}_G$.
\begin{proof}
 If $\rho$ is isomorphic to $\bigoplus_{i=1}^k \rho_i^{n_i}$, where $\rho_1,\ldots, \rho_k$ are pairwise non-isomorphic irreducible representations of $G$, 
 from the orthogonality relations and \cref{remarkSchur} we obtain $\langle \chi,\chi\rangle_G = \sum_{i=1}^k n_i^2 = \dim \centralizer_{\U(n)}(\rho(G))$.
\end{proof}
\end{lemma}

In what follows, $(x)$ denotes the conjugacy class of $x\in G$.

\begin{proposition}\label{repPow2}
  Let $G=\PSL_2(q)$ with $q=2^n$.
  Then there are elements $a,b,c\in G$ with orders $|a|=q-1$, $|b|=q+1$ and $|c|=2$, such that the following hold:
  \begin{enumerate}[label=(\roman*)]
    \item There are exactly $q+1$ conjugacy classes in $G$:
    $(1)$, $(a^i)$ for $1\leq i \leq q/2-1$, $(b^j)$ for $1\leq j\leq q/2$, and $(c)$.
    \item The elements in a Borel subgroup $B=\F_q\rtimes C_{q-1}$ are the following: $1$; $2q$ elements in $(a^i)$, for each $i$; and $q-1$ elements in $(c)$.
    \item  There is an irreducible character $\chi$ given by
    \begin{center}
      \begin{tabular}{c|ccccc}
               & $1$   & $(a^i)$ & $(b^j)$                   & $(c)$ \\ \hline 
        $\chi$ & $q-1$ & $0$     & $-(\omega^j+\omega^{-j})$ & $-1$
    \end{tabular}
  \end{center}
  where $\omega=e^{\frac{2\pi \ii}{q+1}}$.
  \end{enumerate}

  \begin{proof}
  The description of the conjugacy classes and the character table for $\PSL_2(2^n)=\SL_2(2^n)$ can be found in \cite[Theorem 38.2]{DornhoffPartA}.
  Note that $(c)$ is the unique conjugacy class of involutions
  and any cyclic group of order $q-1$ must contain $1$ and $2$ elements from $(a^i)$ for each $i$.
  Let $A(x,a)=\begin{pmatrix} x & a \\ 0 & x^{-1} \end{pmatrix}$.
  Then the subgroup $B=\left\{A(x,a) : a\in \F_q, x\in \F_q^*\right\}$ of upper triangular matrices is a Borel subgroup of $G$
  and $B$ is the semidirect product of $N=\{A(1,a): a\in \F_q \}\groupIso \F_q$ and $K=\{A(x,0): x\in \F_q^*\}\groupIso C_{q-1}$.
  The subgroup $N$ contains $1$ and $q-1$ involutions which must lie in $(c)$.
  A straightforward computation proves that nonidentity elements of $N$ do not commute with nonidentity elements of $K$.
  Then considering the conjugation action of $N$ on $B$ we prove part (ii).
  \end{proof}
\end{proposition}

\begin{proposition}\label{rep3Mod4}
  Let $G=\PSL_2(q)$ with $q=p^n \equiv 3 \pmod 4$ and $p$ prime.
  Then there are elements $a,b,c \in G$ with orders $|a|=(q-1)/2$, $|b|=(q+1)/2$ and $|c|=p$, such that the following hold:
  \begin{enumerate}[label=(\roman*)]
    \item There are exactly $(q-1)/2+3$ conjugacy classes in $G$:
    $(1)$, $(a^i)$ for $1\leq i \leq (q-3)/4$, $(b^j)$ for $1\leq j \leq (q+1)/4$, $(c)$, and $(c^{-1})$.
    \item The elements in a Borel subgroup $B=\F_q\rtimes C_{(q-1)/2}$ are the following: $1$; $2q$ elements in $(a^l)$, for $1\leq l \leq (q-3)/4$;
    $(q-1)/2$ elements in $(c)$; and $(q-1)/2$ elements in $(c^{-1})$.
    \item There is an irreducible character $\chi$ given by
    \begin{center}
      \begin{tabular}{c|ccccc}
               & $1$       & $(a^i)$ & $(b^j)$      & $(c)$                & $(c^{-1})$                \\ \hline 
        $\chi$ & $(q-1)/2$ & $0$     & $(-1)^{j+1}$ & $(-1+\sqrt{q}\ii)/2$ & $(-1-\sqrt{q}\ii)/2$ \\
    \end{tabular}
  \end{center}
  \end{enumerate}
  \begin{proof}
    In $\SL_2(q)$ there are elements $\wt{a},\wt{b},\wt{c},\wt{d}$ with $|\wt{a}|=q-1$, $|\wt{b}|=q+1$, $|\wt{c}|=|\wt{d}|=p$
    and the conjugacy classes in $\SL_2(q)$ are $1$, $-1$, $(\wt{a}^i)$ for $1\leq i \leq (q-3)/2$,
    $(\wt{b}^j)$ for $1\leq j \leq (q-1)/2$, $(\wt{c})$, $(\wt{d})$, $(-\wt{c})$, and $(-\wt{d})$ \cite[Theorem 38.1]{DornhoffPartA}
    (this holds whenever $q$ is a power of an odd prime $p$).
    The center of $\SL_2(q)$ is $\{1,-1\}$.
    Note that $-1$ is the only involution in $\SL_2(q)$.
    Since $q\equiv 3\pmod 4$, by \cite[p. 234]{DornhoffPartA} we have $\wt{c}^{-1}\in (\wt{d})$.
    Moreover $\wt{a}^{(q-1)/2}=\wt{b}^{(q+1)/2}=-1$.
    Therefore part (i) follows by considering the classes $a,b,c\in \PSL_2(q)$ of $\wt{a}$, $\wt{b}$, and $\wt{c}$.
    Note that $(b^{(q+1)/4})$ is the only class of involutions in $G$.
    Any cyclic group of order $(q-1)/2$ contains $1$ and two elements in $(a^i)$ for each $i$.
    Any subgroup isomorphic to $\F_q$ contains $1$ and half of the remaining $q-1$ elements must belong to each of the classes $(c)$ and $(c^{-1})$.
    To complete the proof of part (ii) we use the same argument we used to prove part (ii) of \cref{repPow2}.
    Finally, the character $\chi$ in part (iii) is obtained by passing to the quotient the character $\eta_1$ of \cite[Theorem 38.1]{DornhoffPartA}.
  \end{proof}
\end{proposition}

\begin{proposition}\label{repSuzuki}
  Let $G=\Sz(q)$ with $q=2^n$ and $n\geq 3$ odd. Let $r=\sqrt{2q}$.
  Then there are elements $\sigma,\rho,x,y,z \in G$ with orders $|\sigma|=2$, $|\rho|=4$, $|x|=q-1$, $|y|=q+r+1$ and $|z|=q-r+1$, such that the following hold:
  \begin{enumerate}[label=(\roman*)]
    \item There are exactly $q+3$ conjugacy classes in $G$:
    $(1)$, $(\sigma)$, $(\rho)$, $(\rho^{-1})$, $(x^i)$ for $1\leq i \leq q/2-1$,
    $(y^j)$ for $1\leq j \leq (q+r)/4$, and $(z^k)$ for $1\leq k\leq (q-r)/4$.
    \item The elements in a Borel subgroup $B$ are the following: $1$, $q-1$ elements in $(\sigma)$, $q(q-1)/2$ elements in $(\rho)$, $q(q-1)/2$ elements in $(\rho^{-1})$
    and $2q^2$ elements in $(x^i)$ for $1\leq i \leq q/2-1$.
    \item There is an irreducible character $\chi$ given by
    \begin{center}
      \begin{tabular}{c|ccccccc}
               & $1$        & $(\sigma)$ & $(\rho)$  & $(\rho^{-1})$ & $(x^i)$ & $(y^j)$ & $(z^k)$ \\ \hline 
        $\chi$ & $(q-1)r/2$ & $-r/2$     & $\ii r/2$ & $-\ii r/2$    & $0$     & $1$     & $-1$
    \end{tabular}
  \end{center}
  \end{enumerate}
  \begin{proof}
    The description of the conjugacy classes is given in \cite[\S 17]{Suzuki}.
    Any cyclic group of order $q-1$ contains $1$ and two elements in the class $(x^i)$ for each $i$.
    Recall that $B=M(q,\theta)=S(q,\theta)\rtimes T$.
    By \cite[Lemma 1]{Suzuki} the group $S(q,\theta)$ consists of: $1$,
    $q-1$ elements of order $2$ (which must be in $(\sigma)$),
    and the remaining $q^2-q$ elements have order $4$.
    Then there must be $(q^2-q)/2$ elements in each of the classes $(\rho)$ and $(\rho^{-1})$.
    Now $T$ is a cyclic group of order $q-1$ and
    since nonidentity elements of $S(q,\theta)$ do not commute with nonidentity elements of $T$ \cite[Lemma 5]{Suzuki},
    part (ii) can be obtained by considering the conjugation action of $S(q,\theta)$ on $B$.
    The character table of $\Sz(q)$ can be found in \cite[Theorem 13]{Suzuki}.
  \end{proof}
\end{proposition}

\begin{remark}\label{remarkOneConjugacyClassSmallCyclicSubgroups}
  Each of the groups $G$ in \cref{OSThmA} has at most one conjugacy class of cyclic subgroups of order $k$ for $k=2,3,4$.
  This follows from part (i) of \cref{repPow2,rep3Mod4,repSuzuki}
  (and when $q\equiv 1 \pmod 4$ from the first sentence in the proof of \cref{rep3Mod4}).
\end{remark}

\begin{proposition}\label{existenceSuitableRep}
  Let $G$ be one of the groups in \cref{thmB} and let $\lieGroup=\U(m)$ as in Table~\ref{tableDimensionRho}.
  There is an irreducible representation $\rhoG\colon G \to \lieGroup$ together with elements $A, C\in \lieGroup$ which satisfy the following properties:
  \begin{enumerate}[label=(\roman*)]
    \item The restriction of $\rhoG$ to the Borel subgroup $G_{v_0}$ is irreducible.
    \item $\dim \centralizer_{\lieGroup}(\rhoG(G_{v_1})) < \dim \centralizer_{\lieGroup}(\rhoG(G_{\eta_0}))$. 
    \item $A^{-1}\centralizer_{\lieGroup}(\rhoG(G_{\eta_0}))A \subseteq \centralizer_{\lieGroup}(\rhoG(G_{\eta_1}))$.
    \item $C\in \centralizer_{\lieGroup}(\rhoG(G_{\eta_1}))$.
    \item $A C$ commutes with $\centralizer_{\lieGroup}(\rhoG(G_{v_1}))$.
  \end{enumerate}
\begin{proof}
  Let $x$ be a generator of $G_{\eta_0}$ and $y$ be a generator of $G_{\eta_1}$.
  Note that in all cases $\langle x,y\rangle = G_{v_1}$ is a dihedral group of order $2|G_{\eta_0}|$ and $|G_{\eta_0}|$ is odd.
  In what follows $J_n$ denotes the $n\times n$ matrix with $1$ in the antidiagonal and $0$ elsewhere.  
  We also consider the matrix
  \[M_{2n}=\frac{\sqrt{2}}{2}\begin{pmatrix} I_n & J_n \\ I_n & -J_n\end{pmatrix}\in \U(2n)\]
  which satisfies
  \[ J_{2n}  = M_{2n}^{-1} (I_n \oplus -I_n) M_{2n}.\]

  For $G=\PSL_2(q)$ and $q=2^n$ we take $\rhoG$ realizing the degree $q-1$ irreducible character $\chi$ in part (iii) of \cref{repPow2}.
  By \cref{RepresentationIsEquivalentToUnitaryRepresentation}, we can take $\rhoG$ to be unitary.
  To prove (i) we compute the norm of the restriction of $\chi$ to the Borel subgroup using part (ii) of \cref{repPow2}.
  Now by part (i) of \cref{repPow2} the restriction of $\chi$ to $G_{v_1}$ is given by
  \begin{center}
    \begin{tabular}{c|ccc}
             & $1$   & $x^i$ & $x^iy$ \\ \hline 
      $\chi$ & $q-1$ & $0$   & $-1$ 
    \end{tabular}
  \end{center}
  and therefore by \cref{theoremMalcev},
  letting $\xi = e^{\frac{2\pi \ii}{q-1}}$ we can assume that
  \begin{align*}
    \rhoG(x) &= \diag(\xi, \xi^2, \ldots, \xi^{q-2}, 1) \\
    \rhoG(y) &= J_{q-2}\oplus -I_1.
  \end{align*}
  Note that $\centralizer_{\lieGroup}(\rhoG(G_{\eta_0}))= \U(1)^{q-1}$ has dimension $q-1$.
  Now let $A = M_{q-2}\oplus I_1 \in \lieGroup$. Then
  $ \rhoG(y)  = A^{-1} (I_{q/2-1}\oplus -I_{q/2})A$.
  Therefore
  \begin{align*}
    \centralizer_{\lieGroup}(\rhoG(G_{\eta_1})) &= A^{-1} \left(\U\left( q/2-1 \right)\times \U\left( q/2 \right)\right)A \\
    &\supseteq  A^{-1}\U(1)^{q-1}A  \\
    &= A^{-1}\centralizer_{\lieGroup}(\rhoG(G_{\eta_0})) A
  \end{align*}
  and (iii) follows.
  Let $C=A^{-1}(I_{q/2-1}\oplus J_{q/2-1}\oplus I_1)A$.
  Clearly (iv) holds. An easy computation shows that
  \[
  \centralizer_{\lieGroup}(\rhoG(G_{v_1})) =\{
    a_1\oplus a_2 \oplus \cdots \oplus a_{q/2-1} \oplus a_{q/2-1} \oplus \cdots \oplus a_2\oplus a_1 \oplus b
    \tq a_1,a_2,\ldots, a_{q/2-1}, b \in \U(1)
  \}.\]
  Therefore $\dim \centralizer_{\lieGroup}(\rhoG(G_{v_1})) = q/2$ and (ii) is verified.
  Finally (v) follows from a straightforward computation.

  For $G=\PSL_2(q)$ where $q\equiv 3\pmod 8$ and $q>3$ we obtain $\rhoG$ realizing the degree $\frac{q-1}{2}$ character $\chi$ in part (iii) of \cref{rep3Mod4}.
  By \cref{RepresentationIsEquivalentToUnitaryRepresentation}, we can take $\rhoG$ to be unitary.
  To prove (i) we compute the norm of the restriction of $\chi$ to the Borel subgroup using part (ii) of \cref{rep3Mod4}.
  Now by part (i) of \cref{rep3Mod4} the restriction of $\chi$ to $G_{v_1}$ is given by
  \begin{center}
    \begin{tabular}{c|ccc}
             & $1$   & $x^i$ & $x^iy$ \\ \hline 
      $\chi$ & $(q-1)/2$ & $0$   & $1$ 
    \end{tabular}
  \end{center}
  and therefore by \cref{theoremMalcev},
  letting $\xi=e^{\frac{2\pi \ii}{(q-1)/{2}}}$ we can assume that
  \begin{align*}
    \rhoG(x) &= \diag(1, \xi, \xi^2, \ldots, \xi^{(q-3)/2}) \\
    \rhoG(y) &= I_1\oplus J_{(q-3)/2}.
  \end{align*}
  Note that $\centralizer_{\lieGroup}(\rhoG(G_{\eta_0}))= \U(1)^{(q-1)/2}$ has dimension $(q-1)/2$.
  Now let $A = I_1 \oplus M_{(q-3)/2} \in \lieGroup$. Then
  $ \rhoG(y)  = A^{-1} (I_{(q+1)/4}\oplus -I_{(q-3)/4})A$.
  Therefore
  \begin{align*}
    \centralizer_{\lieGroup}(\rhoG(G_{\eta_1})) &= A^{-1} (\U\left((q+1)/4\right)\times \U\left((q-3)/4\right))A \\
    &\supseteq  A^{-1}\U(1)^{(q-1)/2}A  \\
    &= A^{-1}\centralizer_{\lieGroup}(\rhoG(G_{\eta_0})) A
  \end{align*}
  and (iii) follows.
  Let $C=A^{-1}(I_{1}\oplus I_{(q-3)/4}\oplus J_{(q-3)/4})A$.
  Clearly (iv) holds. An easy computation shows that
  \[
  \centralizer_{\lieGroup}(\rhoG(G_{v_1})) =\{
    b\oplus a_1\oplus a_2 \oplus \cdots \oplus a_{(q-3)/4} \oplus a_{(q-3)/4} \oplus \cdots \oplus a_2\oplus a_1
    \tq b, a_1,a_2,\ldots, a_{(q-3)/4} \in \U(1)
  \}.\]
  Therefore  $\dim \centralizer_{\lieGroup}(\rhoG(G_{v_1})) = (q+1)/4$ and (ii) is verified.
  Finally (v) follows from a straightforward computation.

  For $G=\Sz(q)$ with $q=2^n$ and $n\geq 3$ odd, let $r=\sqrt{2q}$.
  We take $\rhoG$ realizing the degree $(q-1)r/2$ character $\chi$ in part (iii) of \cref{repSuzuki}.
  By \cref{RepresentationIsEquivalentToUnitaryRepresentation}, we can take $\rhoG$ to be unitary.
  To prove (i) we compute the norm of the restriction of $\chi$ to the Borel subgroup using part (ii) of \cref{repSuzuki}.
  Now by part (i) of \cref{repSuzuki} the restriction of $\chi$ to $G_{v_1}$ is given by
  \begin{center}
    \begin{tabular}{c|ccc}
             & $1$   & $x^i$ & $x^iy$ \\ \hline 
      $\chi$ & $(q-1)r/2$ & $0$   & $-r/2$ 
    \end{tabular}
  \end{center}
  and therefore by \cref{theoremMalcev},
  letting $\xi = e^{\frac{2\pi \ii}{q-1}}$ we can assume that
  \begin{align*}
    \rhoG(x) &= \xi I_{r/2} \oplus \xi^2 I_{r/2} \oplus\cdots \oplus \xi^{q-2}I_{r/2} \oplus I_{r/2} \\
    \rhoG(y) &= J_{(q-2) r/2}\oplus -I_{r/2}.
  \end{align*}
  Note that $\centralizer_{\lieGroup}(\rhoG(G_{\eta_0}))= \U(r/2)^{q-1}$ has dimension $(q-1)q/2$.
  Now let $A = M_{(q-2) r/2}\oplus I_{r/2} \in \lieGroup$. Then
  $ \rhoG(y)  = A^{-1} (I_{(q/2-1) r/2}\oplus -I_{q/2 \cdot r/2})A$.
  Therefore
  \begin{align*}
    \centralizer_{\lieGroup}(\rhoG(G_{\eta_1})) &= A^{-1} (\U\left((q/2-1) r/2\right)\times \U\left(q/2\cdot r/2 \right))A \\
    &\supseteq  A^{-1}\U(r/2)^{q-1}A  \\
    &= A^{-1}\centralizer_{\lieGroup}(\rhoG(G_{\eta_0})) A
  \end{align*}
  and (iii) follows.
  Let $C=A^{-1}(I_{(q/2-1) r/2}\oplus J_{(q/2-1)r/2}\oplus I_1)A$.
  Clearly (iv) holds. An easy computation shows that
  \[
  \centralizer_{\lieGroup}(\rhoG(G_{v_1})) =\{
    a_1\oplus a_2 \oplus \cdots \oplus a_{q/2-1} \oplus a_{q/2-1} \oplus \cdots \oplus a_2\oplus a_1 \oplus b
    \tq a_1,a_2,\ldots, a_{q/2-1}, b \in \U(r/2)
  \}.\]
  Therefore  $\dim \centralizer_{\lieGroup}(\rhoG(G_{v_1})) = q^2/4$ and (ii) is verified.
  Finally (v) follows from a straightforward computation.
\end{proof}
\end{proposition}


\section{The dimension of $\MMbar_k$}

From now on, let $\MM_k$ be the moduli of representations of $\Gamma_k$ obtained by
applying the construction of \cref{constructionModuli} to a representation $\rhoG$ obtained using \cref{existenceSuitableRep}. 
Let $\MMbar_k$ be the corresponding quotient obtained using \cref{quotientModuli}.
Note that $\MM_k = \MM_0 \times \lieGroup^k$ and that $\MMbar_k=\MMbar_0\times \lieGroup^k$.
From \cref{connectednessOfMAndMbar} we know that $\MM_k$ and $\MMbar_k$ are connected and orientable.

This section is devoted to proving that $\dim \MMbar_k = \dim \lieGroup^{k+1} $.
It is straightforward but tedious to prove this by expressing in terms of powers of $q$ the dimension of each centralizer involved in the definition of $\MMbar_k$
for this can be done by restricting the character of $\rhoG$ to each stabilizer subgroup and computing the norm of the restriction.
We present an alternative, more elegant proof which sheds light on why this equality holds in all cases.

\begin{lemma}[Piterman]\label{AnotherProofDimMM}
  Let $X$ be an acyclic $2$-dimensional $G$-complex and let $\varphi,\psi$ be two characters of $G$.
  Let $V$, $E$, $F$ be representatives of the orbits of vertices, edges and $2$-cells in $X$.
  Then  
  \[\langle \varphi,\psi\rangle_G + \sum_{e\in E} \langle \Res^G_{G_e}\varphi , \Res^G_{G_e}\psi \rangle_{G_e} 
      = \sum_{v\in V} \langle \Res^G_{G_v}\varphi , \Res^G_{G_v}\psi \rangle_{G_v}  + \sum_{f\in F} \langle \Res^G_{G_f}\varphi , \Res^G_{G_f}\psi \rangle_{G_f}.\]
  
  \begin{proof}
    Since $X$ is acyclic, $\wt{C}_{-1}(X;\C)\oplus\wt{C}_1(X;\C) \simeq \wt{C}_0(X;\C)\oplus \wt{C}_2(X;\C)$ as $G$-modules.
    Then, letting $\alpha_H$ be the character of the $G$-module $\C[G/H]$ we have
    \[\alpha_G + \sum_{e\in E} \alpha_{G_e}  = \sum_{v\in V} \alpha_{G_v} + \sum_{f\in F} \alpha_{G_f}\]
    and now
    the result follows from Frobenius reciprocity:
    \[\langle \Res^G_H\varphi, \Res^G_H\psi\rangle_H= \langle \varphi, \Ind^G_H \Res^G_H\psi \rangle_G = \langle \varphi, \alpha_H\psi\rangle_{G.}\]
  \end{proof}
\end{lemma}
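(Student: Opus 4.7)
The plan is to reduce the identity to an Euler-characteristic statement about the equivariant cellular chain complex of $X$ and then translate it via Frobenius reciprocity. The starting point is that since $X$ is acyclic, the augmented reduced cellular chain complex
$$0 \to C_2(X;\C) \to C_1(X;\C) \to C_0(X;\C) \to \C \to 0$$
is an exact sequence of $\C[G]$-modules. Each $C_i(X;\C)$ decomposes as a direct sum of permutation modules $\C[G/G_\sigma]$ indexed by orbit representatives $\sigma$, so exactness yields the identity $C_0 + C_2 = C_1 + \C$ in the Grothendieck ring of $\C[G]$-modules (everything is semisimple, so Grothendieck-ring relations are the obvious additive ones). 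Writing $\alpha_H$ for the character of the permutation module $\C[G/H]$, passing to characters produces
$$1_G + \sum_{e\in E}\alpha_{G_e} \;=\; \sum_{v\in V}\alpha_{G_v} + \sum_{f\in F}\alpha_{G_f}.$$

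Next, I would multiply both sides of this identity, now regarded as a relation between class functions, by $\psi$ and take the Hermitian inner product with $\varphi$. This immediately gives
$$\langle \varphi,\psi\rangle_G + \sum_{e\in E}\langle \varphi, \alpha_{G_e}\psi\rangle_G \;=\; \sum_{v\in V}\langle \varphi, \alpha_{G_v}\psi\rangle_G + \sum_{f\in F}\langle \varphi, \alpha_{G_f}\psi\rangle_G.$$
The final step is to apply the projection formula $\Ind^G_H\Res^G_H\psi = \alpha_H\cdot\psi$ and then Frobenius reciprocity to each subgroup term, obtaining
$$\langle \varphi, \alpha_H\psi\rangle_G \;=\; \langle \varphi, \Ind^G_H\Res^G_H\psi\rangle_G \;=\; \langle \Res^G_H\varphi, \Res^G_H\psi\rangle_H,$$
which turns each summand into the form appearing in the claim.

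I do not foresee a genuine obstacle: the strategy is standard once one spots that acyclicity is exactly the input needed to obtain an Euler-characteristic relation among permutation characters. The only places requiring minor care are the passage from exactness of the chain complex to the additive relation in the Grothendieck ring (immediate from semisimplicity), and keeping conjugation conventions consistent when manipulating the Hermitian inner product, for which it suffices to observe that each permutation character $\alpha_H$ is $\mathbb{Z}$-valued and hence real.
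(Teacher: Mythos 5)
Your proposal is correct and follows essentially the same route as the paper: acyclicity gives the Euler-characteristic relation $\alpha_G + \sum_{e}\alpha_{G_e} = \sum_{v}\alpha_{G_v} + \sum_{f}\alpha_{G_f}$ among permutation characters (the paper phrases this as the $G$-module isomorphism $\wt{C}_{-1}\oplus\wt{C}_1\simeq \wt{C}_0\oplus\wt{C}_2$, which is equivalent to your Grothendieck-ring argument by semisimplicity), and then the projection formula plus Frobenius reciprocity converts each term $\langle\varphi,\alpha_H\psi\rangle_G$ into $\langle\Res^G_H\varphi,\Res^G_H\psi\rangle_H$. Your added remarks on real-valuedness of $\alpha_H$ and the passage from exactness to the additive relation are correct refinements of the same argument.
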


Combining this with \cref{lemmaDimensionCentralizerU} we obtain:

\begin{corollary}\label{coroDimensionCentralizers}
  Let $X$ be an acyclic $2$-dimensional $G$-complex and let $\rho\colon G\to \U(n)$ be a unitary representation.
  Let $V$, $E$, $F$ be representatives of the orbits of vertices, edges and $2$-cells in $X$.
  Then
  \[
    \dim \centralizer_{\U(n)}(\rho(G)) 
    - \sum_{v \in V} \dim \centralizer_{\U(n)}(\rho(G_v)) 
    + \sum_{e \in E} \dim  \centralizer_{\U(n)}(\rho(G_e))
    - \sum_{f \in F}\dim \centralizer_{\U(n)}(\rho(G_f))
    = 0.
  \]
\end{corollary}

\begin{proposition}\label{propositionDimensionMM0}
  For each of the groups $G$ in \cref{thmB}, the dimension of $\MMbar_k$ is equal to the dimension of $\lieGroup^{k+1}$.
  \begin{proof}
    We consider an acyclic $2$-complex $X$ obtained from $\XOSk(G)$ by attaching $k+1$ free orbits of $2$-cells.
    We apply \cref{coroDimensionCentralizers}.
    By \cref{existenceSuitableRep}, $\rhoG$ and its restriction to the Borel subgroup $G_{v_0}$ are irreducible and we thus have
    $\dim \centralizer_{\lieGroup}(\rho(G)) = \dim \centralizer_{\lieGroup}(\rho(G_{v_0})) = 1$ so these terms cancel.
    Moreover for $f\in F$ we have $\centralizer_{\lieGroup}(\rho(G_f))=\lieGroup$ and so by part (iii) of \cref{quotientModuli} we are done.
  \end{proof}
\end{proposition}


\section{The differential of $\WWbar$ at $\identityMMBar$} \label{sectionDifferentialX}

For each of the groups $G$ in \cref{thmB},
we consider a closed edge path $\xi$ in $\XOS(G)$ such that attaching a free orbit of $2$-cells to it along this path gives an acyclic $2$-complex.
We define $x_0=\inclusionBrown(\xi)$, where $\inclusionBrown\colon \pi_1(\XOS(G),v_0)\to \Gamma_0$ is the inclusion given by Brown's theorem.
We set $x_i=x_{\eta'_i}$ for $i=1,\ldots,k$.
For $i=0,\ldots,k$ we consider the map $X_i\colon\MM\to \lieGroup$ induced by $x_i$.

We explain here some notation which is only needed in this proof of the following lemma.
If $x=\sum_{g\in G}x_g g\in \Z[G]$ then we define $\overline{x}=\sum_{g\in G} x_g g^{-1}$.
We have $\overline{x+y}=\overline{x}+\overline{y}$ and $\overline{x\cdot y}=\overline{y}\cdot \overline{x}$.
If $H\leq G$ is a subgroup we define $N(H)=\sum_{h\in H} h$.

\begin{lemma}\label{linearCombinationFixedPoints}
 Let $G$ be one of the groups in \cref{OSThmA}.
 Let $E$ be a set of representatives of the orbits of edges in $\XOS(G)$.
 Let $X$ be an acyclic $2$-complex obtained from $\XOS(G)$ by attaching a free orbit of $2$-cells along (the orbit of) a closed edge path
 $\xi = (a_1 e_1^{\varepsilon_1}, \ldots, a_n e_n^{\varepsilon_n})$
 with $e_i\in E$, $a_i\in G$ and $\varepsilon_i\in\{-1,1\}$.
 Let $G_e$ be the stabilizer of $e$.
 Then it is possible to choose, for each $e\in E$ an element $x_e\in \Z[G]$ such that
 \[1 = \sum_{i=1}^n \varepsilon_i a_i N(G_{e_i})x_{e_i}.\]
 Therefore for any representation $V$ of $G$ we have $V= \displaystyle\sum_{e\in E} s_e V^{G_e}$, 
 where $\displaystyle s_e = \sum_{i\in I_e} \varepsilon_i a_i$ and $I_e=\{i\tq e_i = e\}$.
  \begin{proof}
  We consider the cellular chain complex of $X$ (which is a complex of left $\Z[G]$-modules).
  Let $\alpha$ be the $2$-cell attached along $\xi$.
  We have isomorphisms $C_2(X)\simeq \Z[G]$ and $C_1(X)\simeq \displaystyle \bigoplus_{e\in E} \Z[G/G_e]$ given by $\alpha\mapsto 1$ and $e\mapsto 1\cdot G_e$ respectively.
  With these identifications, the differential $d_2\colon C_2(X)\to C_1(X)$ is given by $d_2(1) = \displaystyle\sum_{i=1}^n \varepsilon_i a_i G_{e_i} = \sum_{e\in E} s_e G_e$.
  Now the differential $d^2\colon C^1(X;\Z)\to C^2(X;\Z)$ identifies with the map 
  \begin{align*}
  d^2\colon \bigoplus_{e\in E} \Z[G/G_e]&\to \Z[G]\\
   1\cdot G_e &\mapsto N(G_e) \overline{s_e}.
   \end{align*}
  Since $X$ is acyclic, the differential $d^2$ is surjective and there are elements $y_e\in \Z[G]$ such that
  $1=\sum_{e\in E} y_e N(G_e) \overline{s_e}$.
  Finally, since $\overline{N(H)}=N(H)$ and letting $x_e=\overline{y_e}$ we have $1=\sum_{e\in E} s_e N(G_e) x_e$.
  \end{proof}  
\end{lemma}

\begin{proposition}\label{lemmaRegularPointXX}
 Let $\XX = (X_0,\ldots,X_k)\colon \MM_k\to \lieGroup^{k+1}$.
 Then $\identityMM$ is a regular point of $\XX$.
\begin{proof}
 The proof reduces to the case of $k=0$.
 Consider the representation
 \[\Ad\circ \rhoG \colon G\to \GL( T_\identityLieGroup \lieGroup )\]
 which (by \cref{DescriptionAd}) is given by $g\cdot v = d_{\rhoG{(g)}^{-1}} L_{\rhoG(g)}\circ d_\identityLieGroup R_{\rhoG(g)^{-1}} ( v )$.
 By \cref{LieAlgebraOfCentralizer} we have $T_\identityLieGroup \centralizer_\lieGroup( \rhoG( H ) ) = {(T_\identityLieGroup \lieGroup)}^H$.
 Then by \cref{linearCombinationFixedPoints} we have $T_\identityLieGroup \lieGroup = \sum_{e\in E} s_e \cdot T_\identityLieGroup \centralizer_\lieGroup(\rhoG(G_e ))$.
 Now the result follows from \cref{differentialWordInducedFromClosedEdgePath}.
\end{proof}
\end{proposition}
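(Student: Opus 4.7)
The plan is to first reduce to the case $k = 0$ and then verify surjectivity of $d_\identityMM X_0 \colon T_\identityMM \MM_0 \to T_\identityLieGroup \lieGroup$. The reduction is clean because each extra edge $\eta'_i$ has trivial stabilizer, so $\centralizer_\lieGroup(\rhoG(G_{\eta'_i})) = \lieGroup$ contributes a full factor of $\lieGroup$ to $\MM_k$, and on this factor the induced map $X_i$ is (up to left- and right-translations by constants) the identity onto the corresponding $\lieGroup$ factor of the target. Hence the rank of $d_\identityMM \XX$ on $\MM_k = \MM_0 \times \lieGroup^k$ is the rank of $d_\identityMM X_0$ plus $k \dim \lieGroup$, and regularity of $\XX$ is equivalent to surjectivity of $d_\identityMM X_0$.

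For $k = 0$, I would write the closed edge path attaching the free $2$-orbit as $\xi = (a_1 e_1^{\varepsilon_1}, \ldots, a_n e_n^{\varepsilon_n})$, so that $x_0 = \inclusionBrown(\xi)$. Applying \cref{differentialWordInducedFromClosedEdgePath} and rewriting the composition $d L_{\rhoG(a_i)} \circ d R_{\rhoG(a_i)^{-1}}$ as $\Ad(\rhoG(a_i))$ via \cref{DescriptionAd} gives
$$d_\identityMM X_0 = -\sum_{i=1}^n \varepsilon_i \, \Ad(\rhoG(a_i)) \circ d_\identityLieGroup j_{e_i}.$$
By \cref{LieAlgebraOfCentralizer}, the image of $d_\identityLieGroup j_{e_i}$ is precisely the fixed subspace $(T_\identityLieGroup \lieGroup)^{G_{e_i}}$ for the adjoint action of $G_{e_i}$. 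Therefore the image of $d_\identityMM X_0$ contains the subspace
$$\sum_{i=1}^n \varepsilon_i \, \Ad(\rhoG(a_i)) \bigl( (T_\identityLieGroup \lieGroup)^{G_{e_i}} \bigr),$$
and the task reduces to showing this sum equals all of $T_\identityLieGroup \lieGroup$.

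This is exactly where \cref{linearCombinationFixedPoints} enters. That lemma, which exploits acyclicity of the $2$-complex $X$ through the surjectivity of the cellular coboundary $d^2$, produces elements $x_{e_i} \in \Z[G]$ satisfying $1 = \sum_{i=1}^n \varepsilon_i a_i N(G_{e_i}) x_{e_i}$ in $\Z[G]$. I would then transfer this identity through the adjoint representation: for any $v \in T_\identityLieGroup \lieGroup$,
$$v = \sum_{i=1}^n \varepsilon_i \, \Ad(\rhoG(a_i)) \bigl( \Ad(\rhoG(N(G_{e_i}) x_{e_i})) v \bigr),$$
and each vector $\Ad(\rhoG(N(G_{e_i}) x_{e_i})) v$ lies in $(T_\identityLieGroup \lieGroup)^{G_{e_i}}$ because the group-algebra element $N(G_{e_i}) = \sum_{h \in G_{e_i}} h$ symmetrizes over $G_{e_i}$. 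This exhibits $v$ as a member of the desired sum.

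The main obstacle I anticipate is purely bookkeeping: matching the combinatorial data of the edge path $\xi$ (the signs $\varepsilon_i$ and group elements $a_i$) between the differential formula of \cref{differentialWordInducedFromClosedEdgePath} and the cellular boundary formula used in \cref{linearCombinationFixedPoints}. Once these are identified, the algebraic identity in $\Z[G]$ transfers mechanically to a spanning statement in $T_\identityLieGroup \lieGroup$ via the adjoint representation, and surjectivity follows immediately.
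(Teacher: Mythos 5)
Your proof is correct and follows essentially the same route as the paper: reduce to $k=0$ using the free orbits of $1$-cells, rewrite the differential from \cref{differentialWordInducedFromClosedEdgePath} as $-\sum_i \varepsilon_i\,\Ad(\rhoG(a_i))\circ d_\identityLieGroup j_{e_i}$ via \cref{DescriptionAd}, identify $T_\identityLieGroup \centralizer_\lieGroup(\rhoG(G_{e_i}))$ with $(T_\identityLieGroup\lieGroup)^{G_{e_i}}$ by \cref{LieAlgebraOfCentralizer}, and invoke the $\Z[G]$-identity of \cref{linearCombinationFixedPoints} to span $T_\identityLieGroup\lieGroup$. One small caution: when several indices $i$ share the same edge $e_i=e$, the image of $d_\identityMM X_0$ is only $\sum_{e\in E} s_e\,(T_\identityLieGroup\lieGroup)^{G_e}$, not the larger unconstrained sum over $i$ that you claim it "contains"; however, your witness $\Ad(\rhoG(N(G_{e_i})x_{e_i}))v$ depends only on the edge $e_i$ (since $x_e$ is chosen per edge), so it respects this constraint and your argument goes through as written.
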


\begin{proposition}
If $w_0,\ldots, w_k\in N$ satisfy $N=\llangle w_0,\ldots, w_k\rrangle^{\Gamma_k} [N,N]$, then $\identityLieGroup$ is a regular point of $\WW=(W_0,\ldots,W_k)\colon \MM_k \to \lieGroup^{k+1}$.
\begin{proof}
This follows from \cref{theoremDegreeW} and \cref{lemmaRegularPointXX}.
\end{proof}
\end{proposition}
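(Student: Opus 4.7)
The proof is short because it amounts to bolting the two lemmas cited (\cref{theoremDegreeW} and \cref{lemmaRegularPointXX}) together. Concretely, I would aim to show that the differential $d_{\identityMM}\WW \colon T_{\identityMM}\MM_k \to T_{\identityLieGroup}\lieGroup^{k+1}$ is surjective, i.e., that $\identityMM$ is a regular point of $\WW$.

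The plan is to apply \cref{theoremDegreeW} with $p=\identityMM$, with the given $w_0,\dots,w_k \in N$, and with the Bass--Serre words $x_0,\dots,x_k \in \Gamma_k$ fixed earlier playing the role of the ``$x_j$'' from that lemma. To do so I need to check two hypotheses. First, that $\rho_{\identityMM}(w)=\identityLieGroup$ for every $w \in N$: this holds because $\rho_{\identityMM} = \rhoG \circ \phi$ is the universal representation (it is immediate from the formulas in \cref{constructionModuli}), so it vanishes on $N=\ker\phi$. Second, that each $x_i$ lies in $\llangle w_0,\dots,w_k\rrangle^{\Gamma_k}[N,N]$. For this I would first observe that $x_0=\inclusionBrown(\xi)$ lies in $N$ by construction, and that each $x_i=x_{\eta'_i}$ for $i\geq 1$ lies in $N$ as well because $g_{\eta'_i}=1$ and hence $\phi(x_{\eta'_i})=1$. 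Then the hypothesis $N=\llangle w_0,\dots,w_k\rrangle^{\Gamma_k}[N,N]$ immediately places each $x_i$ in the required subgroup.

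Once both hypotheses are verified, \cref{theoremDegreeW} yields
\[
\rk\, d_{\identityMM}\WW \;\geq\; \rk\, d_{\identityMM}\XX.
\]
By \cref{lemmaRegularPointXX}, the right-hand side is $\dim\lieGroup^{k+1}$, so $d_{\identityMM}\WW$ is surjective, which is exactly the conclusion.

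There is no real obstacle; the only genuine content has already been done in the two cited results. The only point that deserves a line of care is the verification that the chosen $x_i$ actually belong to $N$, which reduces to the observation above about the choices $g_{\eta'_i}=1$ on the free orbits of edges.
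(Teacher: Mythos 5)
Your proposal is correct and is exactly the argument the paper intends: apply \cref{theoremDegreeW} at $p=\identityMM$ with the words $x_0,\dots,x_k$ (which lie in $N=\llangle w_0,\dots,w_k\rrangle^{\Gamma_k}[N,N]$, since $x_0=\inclusionBrown(\xi)$ and $\phi(x_{\eta'_i})=g_{\eta'_i}=1$), note that $\rho_{\identityMM}=\rhoG\circ\phi$ kills $N$, and conclude $\rk\, d_{\identityMM}\WW\geq \rk\, d_{\identityMM}\XX=\dim\lieGroup^{k+1}$ by \cref{lemmaRegularPointXX}. The only difference is that you spell out the routine verifications the paper leaves implicit.
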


Now since $\WWbar\circ p = \WW$ we have:

\begin{corollary}\label{universalRepIsRegularPoint}
 If $w_0,\ldots, w_k\in N$ satisfy $N=\llangle w_0,\ldots, w_k\rrangle^{\Gamma_k} [N,N]$, then $\identityMMBar$ is a regular point of $\WWbar=(\Wbar_0,\ldots,\Wbar_k)\colon \MMbar_k \to \lieGroup^{k+1}$.
\end{corollary}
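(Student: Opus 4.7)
The plan is to deduce the corollary directly from the preceding proposition by transporting the regularity statement from $\MM_k$ down to $\MMbar_k$ along the quotient map $p$. The two ingredients are already in place: part (v) of \cref{quotientModuli} provides the factorization $\Wbar = W\circ p$ for each word $w\in \Gamma_k$, and part (iii) of the same theorem guarantees that $p\colon \MM_k\to \MMbar_k$ is a smooth submersion with $p(\identityMM)=\identityMMBar$.

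First, I would apply the factorization componentwise to obtain the identity $\WWbar \circ p = \WW$ of smooth maps $\MM_k \to \lieGroup^{k+1}$. Differentiating this identity at $\identityMM$ and using the chain rule yields
\[
d_{\identityMM}\WW \;=\; d_{\identityMMBar}\WWbar \,\circ\, d_{\identityMM} p.
\]
By the preceding proposition, whose hypothesis is precisely the one assumed here, the left-hand side is surjective. Since the image of a composition is contained in the image of the outer factor, surjectivity of $d_{\identityMM}\WW$ forces surjectivity of $d_{\identityMMBar}\WWbar$, which is exactly the claim that $\identityMMBar$ is a regular point of $\WWbar$.

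There is essentially no obstacle in this argument; it is the reason the statement is phrased as an immediate corollary. The substantive work was done earlier: in \cref{lemmaRegularPointXX}, where the cohomological identity from \cref{linearCombinationFixedPoints} (based on the acyclicity of the attached $2$-complex) is combined with \cref{LieAlgebraOfCentralizer} and the formula in \cref{differentialWordInducedFromClosedEdgePath} to produce surjectivity of $d_{\identityMM}\XX$, and then in the preceding proposition, where \cref{theoremDegreeW} upgrades this to surjectivity of $d_{\identityMM}\WW$ under the assumption $N=\llangle w_0,\ldots, w_k\rrangle^{\Gamma_k} [N,N]$. All that remains at this point is to observe that submersions carry regular points to regular points along commutative triangles.
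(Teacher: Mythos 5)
Your argument is correct and is exactly the paper's: the corollary is deduced from the identity $\WWbar\circ p=\WW$ together with the preceding proposition, since the chain rule makes surjectivity of $d_{\identityMM}\WW$ imply surjectivity of $d_{\identityMMBar}\WWbar$. No further comment is needed.
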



\section{The degree of $\WWbar$}
\label{sectionDegree}
In this section we prove the degree of $\WWbar$ is $0$.
We start by recalling the definition and some properties of the degree (see e.g.~\cite[Chapter 17]{Lee} for a detailed exposition).
Let $M$ and $M'$ be oriented $m$-manifolds.
The \textit{degree} $\deg(f)$ of a smooth map $f\colon M\to M'$ is the unique integer $k$ such that 
\[\int_M f^*(\omega) = k\int_{M'} \omega\]
for every smooth $m$-form $\omega$ on $M'$.
If $x\in M$ is a regular point of $f$, then $d_xf\colon T_x M\to T_{f(x)} M'$ is an isomorphism between oriented vector spaces and we can consider its sign $\sg\, d_x f$.
If $y\in M'$ is a regular value of $f$ we have \[\deg(f)=\sum_{x\in f^{-1}(y)}\sg\, d_x f.\]
In particular if $f$ is not surjective then $\deg(f)=0$.
Homotopic maps have the same degree.
If $N$ and $N'$ are oriented $n$-manifolds and $g\colon N\to N'$ is a smooth map then $\deg( f\times g ) = \deg(f) \deg(g)$.
If $M''$ is an oriented $m$-manifold and $h\colon M'\to M''$ is smooth then $\deg(h\circ f)=\deg(h) \deg(f)$.

Note that in all cases there is a unique edge of $\XOS(G)$ which lies in $E-T$, which we denote by $\onlyEdgeInEMinusT$.
We define $y_0=x_{\onlyEdgeInEMinusT}$ and $y_i=x_{\eta'_i}$ for $i=1,\ldots,k$.
For $i=0,\ldots,k$ we consider the map $Y_i\colon \MM \to \lieGroup$ induced by $y_i$.
The proof that $\deg(\WWbar)=0$ reduces to proving $\deg(\YYbar)=0$ which in turn reduces to proving that $\overline{Y}_0\colon \MMbar_0\to \lieGroup$ has degree $0$.
\cref{TableY0} gives the value of $Y_0$ in the different cases that we consider.
We will also consider two auxiliary maps $T$ and $Z$.

\renewcommand{\arraystretch}{1.25}
\begin{table}[H]
 \centering
  \begin{tabular}{|c|c|c|}
  \hline
  $G$ & $q$ & $Y_0(\tau)$ \\
  \hline
  $\PSL_2(q)$ & $2^n$ & $\tau_{\eta_0}^{-1}\tau_{\eta_1}^{-1}\tau_{\eta_2}^{-1}\rhoG(g_{\eta_2})$\\
  \hline
  $\PSL_2(q)$ & $3^n$ & $\tau_{\eta_0}^{-1}\tau_{\eta_{1}}^{-1}\tau_{\eta_{2}}^{-1}\tau_{\eta_3}^{-1}\rhoG(g_{\eta_3})$ \\
  \hline
  $\PSL_2(q)$ & $q\equiv 19 \pmod {24}$  & $\tau_{\eta_0}^{-1}\tau_{\eta_{1}}^{-1}\tau_{\eta_{2}}^{-1}\tau_{\eta_3}^{-1}\rhoG(g_{\eta_3})$\\
  \hline
  $\PSL_2(q)$ & $q\equiv 11\pmod {24}$  & $\tau_{\eta_0}^{-1}\tau_{\eta_{1}}^{-1}\tau_{\eta_{2}}^{-1}\tau_{\eta_3}^{-1}\rhoG(g_{\eta_3}) \tau_{\eta_1}\tau_{\eta_0}$ \\
  \hline
  $\Sz(q)$ & $2^n$ & $\tau_{\eta_0}^{-1}\tau_{\eta_1}^{-1}\tau_{\eta_{2}}^{-1}\tau_{\eta_{3}}^{-1}\rhoG(g_{\eta_3})$  \\
  \hline
  \end{tabular}
  \caption{The map $Y_0\colon \MM_0\to \lieGroup$, for each of the groups $G$ in \cref{thmB}.}
  \label{TableY0}
 \end{table}
 \renewcommand{\arraystretch}{1}

 \begin{proposition}\label{propositionTNotSurjective}
  In all cases, the map $T\colon \prod_{i>0}\centralizer_\lieGroup( \rhoG( G_{\eta_i} ) )\to \lieGroup$ of \cref{TableZ} is not surjective.
  \begin{proof}
    Let $M=\prod_{i>0}\centralizer_\lieGroup( \rhoG( G_{\eta_i} ) )$ and let $H=\{\alpha\in \prod_{i\geq 1}\centralizer_\lieGroup( \rhoG( G_{v_i} ) ) \tq \alpha_{v_1}=\identityLieGroup\}$.
    The action $\MM_0\rightAction\HH$ restricts to a free action of $H\leq \HH$ on the factor $M$ of $\MM_0$.
    Note that $T$ factors through the quotient $M\to M/H$ giving a map $\overline{T}\colon M/H\to \lieGroup$.
    By \cref{QuotientManifoldTheorem}, $M/H$ is a manifold and we have
    \begin{align*}
     \dim M/H & =\dim M -\dim H \\
     & = \dim \MM_0 -\dim \HH - \dim \centralizer_{\lieGroup}(\rhoG(G_{\eta_0})) + \dim \centralizer_{\lieGroup}(\rhoG(G_{v_1})) \\
     & <\dim \MM_0 -\dim \HH\\
     & = \dim \lieGroup
    \end{align*}
    (by part (ii) of \cref{existenceSuitableRep}).
    Now by Sard's Theorem (\cite[Corollary 6.11]{Lee}) $\overline{T}$ is not surjective, and thus $T$ is not surjective.
  \end{proof}
  \end{proposition}

  \begin{table}[ht]
    \centering
     \begin{tabular}{|c|c|l|l|}
     \hline
     $G$ & $q$ & $T(\tau)$ & $Z(\tau)$  \\
     \hline
     $\PSL_2(q)$ & $2^n$  & $\tau_{\eta_1}^{-1}\tau_{\eta_2}^{-1}\rhoG(g_{\eta_2})$ & $\begin{array}{l}  A^{-1}\tau_{\eta_0}^{-1}A C \\ \cdot \tau_{\eta_1}^{-1}\tau_{\eta_2}^{-1}\rhoG(g_{\eta_2}) \end{array}$ \\
     \hline
     $\PSL_2(q)$ & $3^n$ & $\tau_{\eta_1}^{-1}\tau_{\eta_{2}}^{-1}\tau_{\eta_{3}}^{-1}\rhoG(g_{\eta_3})$ & $\begin{array}{l}  A^{-1}\tau_{\eta_0}^{-1}A C \\ \cdot \tau_{\eta_1}^{-1}\tau_{\eta_{2}}^{-1}\tau_{\eta_{3}}^{-1}\rhoG(g_{\eta_3})\end{array}$ \\
     \hline
     $\PSL_2(q)$ & $q\equiv 19\pmod{24}$ & $\tau_{\eta_1}^{-1}\tau_{\eta_{2}}^{-1}\tau_{\eta_{3}}^{-1}\rhoG(g_{\eta_3})$ & $\begin{array}{l}  A^{-1}\tau_{\eta_0}^{-1}A C \\ \cdot \tau_{\eta_1}^{-1}\tau_{\eta_{2}}^{-1} \tau_{\eta_{3}}^{-1}\rhoG(g_{\eta_3})\end{array}$ \\
     \hline
     $\PSL_2(q)$ & $q\equiv 11\pmod{24}$ & $\tau_{\eta_1}^{-1}\tau_{\eta_{2}}^{-1} \tau_{\eta_{3}}^{-1}\rhoG(g_{\eta_3}) \tau_{\eta_1}$ & $\begin{array}{l} 
     A^{-1}\tau_{\eta_0}^{-1}A C \\ \cdot  \tau_{\eta_1}^{-1}\tau_{\eta_{2}}^{-1} \tau_{\eta_{3}}^{-1}\rhoG(g_{\eta_3}) \tau_{\eta_1}\\
     \cdot   C^{-1} A^{-1} \tau_{\eta_0} A\end{array}$ \\
     \hline
     $\Sz(q)$ & $2^n$ & $\tau_{\eta_1}^{-1}\tau_{\eta_{2}}^{-1}\tau_{\eta_{3}}^{-1}\rhoG(g_{\eta_3})$ & $\begin{array}{l}A^{-1}\tau_{\eta_0}^{-1}A C \\ \cdot \tau_{\eta_1}^{-1}\tau_{\eta_{2}}^{-1}\tau_{\eta_{3}}^{-1}\rhoG(g_{\eta_3})\end{array}$ \\
     \hline
     \end{tabular}
     \caption{The definition of the maps $T$ and $Z$ for each of the groups $G$ in \cref{thmB}.}
     \label{TableZ}
    \end{table}

 \begin{proposition}\label{propositionZNotSurjective}
 In all cases, the map $Z\colon \MM_0\to \lieGroup$ of \cref{TableZ} is not surjective.
  \begin{proof}
    By \cref{propositionTNotSurjective}, it is enough to show the image of $Z$ is contained in the image of $T$.
    Let $\tau\in \MM_0$. It is straightforward to check that if $\widehat{\tau}\in \prod_{i>0}\centralizer_\lieGroup( \rhoG( G_{\eta_i} ) )$ is defined by
    \[\widehat{\tau}_{\eta_i} = \begin{cases}
      \tau_{\eta_1} \cdot C^{-1} A^{-1} \tau_{\eta_0} A & \text{ if } i=1 \\
      \tau_{\eta_i} & \text{ if } i\neq 1.
    \end{cases}\]
  then $Z(\tau)=T(\widehat{\tau})$.
  It follows from parts (iii) and (iv) of \cref{existenceSuitableRep} that $\widehat{\tau}$ is well-defined.
  \end{proof}
 \end{proposition}

\begin{proposition}\label{DegreeYY0bar}
 For each of the groups $G$ in \cref{thmB}, the degree of $\overline{Y}_0 \colon \MMbar_0 \to \lieGroup$ is $0$. 
\begin{proof}
By part (v) of \cref{existenceSuitableRep}, $A C$ commutes with $\centralizer_{\lieGroup}(\rhoG(G_{v_1}))$.
Moreover, by \cref{centralizersAreConnectedInUnitaryGroup}, the centralizers $\centralizer_\lieGroup(\centralizer_\lieGroup(\rhoG(G_{v_i})))$ are connected.
Therefore the hypotheses of \cref{inducedMapIsHomotopicToMapWithFactorsConjugated} are satisfied
and the map $Y_0\colon \MM_0\to \lieGroup$ is $\HH$-equivariantly homotopic to the map $Z$ defined in \cref{TableZ}.
Passing to the quotient we get an homotopy between the maps $\overline{Y}_0,\overline{Z}\colon \MMbar_0 \to \lieGroup$.
By \cref{propositionZNotSurjective}, $Z$ is not surjective and therefore $\overline{Z}$ is not surjective.
We conclude the degree of $\overline{Y}_0$ is $0$.
\end{proof}
\end{proposition}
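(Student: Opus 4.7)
The plan is to argue via homotopy invariance of degree: I will produce an $\HH$-equivariant homotopy between $Y_0$ and the auxiliary map $Z$ of \cref{TableZ}, then descend to $\MMbar_0$ and invoke \cref{propositionZNotSurjective}.

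First, for each of the groups $G$ in \cref{thmB}, I would verify that $Y_0$ has exactly the form to which \cref{inducedMapIsHomotopicToMapWithFactorsConjugated} applies. Concretely, in each row of \cref{TableY0}, $Y_0$ is the map induced by $x_{\onlyEdgeInEMinusT}$ where $\onlyEdgeInEMinusT$ is the unique edge of $\XOS(G)$ outside $T$, so the paths in $T$ from $v_0$ to $s(\onlyEdgeInEMinusT)$ and from $v_0$ to $w(\onlyEdgeInEMinusT)$ are exactly the edges that appear in $Y_0(\tau)$. The data of \cref{inducedMapIsHomotopicToMapWithFactorsConjugated} is then obtained by setting most $A_e$, $C_i$, $B_i$ equal to $\identityLieGroup$ and placing the only nontrivial conjugation at $\eta_0$: we take $A_{\eta_0}$, $A_{\eta_1}$ as in \cref{descriptionCentralizersPowersOfTwo,descriptionCentralizersThreeModFour,descriptionCentralizersSuzuki}, and insert the matrix $C\in\centralizer_\lieGroup(\rhoG(G_{\eta_1}))$ built just before \cref{propositionZNotSurjective}. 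The commutation hypotheses of \cref{inducedMapIsHomotopicToMapWithFactorsConjugated} are trivially satisfied at edges where the inserted factor is $\identityLieGroup$, and at the junction between $\eta_0$ and $\eta_1$ they reduce precisely to the property that $A_{\eta_0}^{-1}\wt{C}A_{\eta_1}$ commutes with $\centralizer_\lieGroup(\rhoG(G_{v_1}))$, which is exactly how $\wt{C}$ was chosen via \cref{propositionNeededForEquivariantHomotopy} and \cref{ThetaDihedral}. The required connectedness of $\centralizer_\lieGroup(\centralizer_\lieGroup(\rhoG(G_v)))$ for all $v\in V$ holds since $\lieGroup=\U(m)$, by \cref{centralizersAreConnectedInUnitaryGroup}.

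Applying \cref{inducedMapIsHomotopicToMapWithFactorsConjugated} thus yields an $\HH$-equivariant homotopy between $Y_0\colon\MM_0\to\lieGroup$ and the map $Z\colon\MM_0\to\lieGroup$ of \cref{TableZ}. Passing to the quotient via \cref{quotientModuli}(v) this descends to a homotopy $\overline{Y}_0\simeq \overline{Z}\colon \MMbar_0\to \lieGroup$. By \cref{propositionZNotSurjective}, $Z$ is not surjective, hence neither is $\overline{Z}$, so $\deg(\overline{Z})=0$. Since degree is a homotopy invariant, $\deg(\overline{Y}_0)=0$, which is the desired conclusion.

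The only nontrivial point — and the one I would expect to be the main obstacle to double-check — is that the matrix $C$ genuinely lies in $\centralizer_\lieGroup(\rhoG(G_{\eta_1}))$ (so that its insertion does not distort the $\HH$-equivariance at $\eta_1$) while simultaneously $A_{\eta_0}^{-1}\wt{C}A_{\eta_1}$ commutes with $\centralizer_\lieGroup(\rhoG(G_{v_1}))$ (so that the homotopy can be constructed through the double centralizer of $G_{v_1}$). Both conditions are guaranteed by the joint choices in \cref{descriptionCentralizersPowersOfTwo,descriptionCentralizersThreeModFour,descriptionCentralizersSuzuki} together with \cref{propositionNeededForEquivariantHomotopy}, whose hypothesis on the multiplicities $d(\rho',\Theta_1)=d(\rho',\Theta_2)$ is supplied by \cref{ThetaDihedral} in the dihedral cases. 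The remainder of the argument is purely formal once these commutation relations are in place.
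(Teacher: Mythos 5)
Your proposal is correct and follows essentially the same route as the paper: homotope $Y_0$ to $Z$ via \cref{inducedMapIsHomotopicToMapWithFactorsConjugated} (with the commutation hypotheses supplied by \cref{propositionNeededForEquivariantHomotopy} and \cref{ThetaDihedral}, and connectedness from \cref{centralizersAreConnectedInUnitaryGroup}), descend to $\MMbar_0$, and conclude from the non-surjectivity of $Z$ in \cref{propositionZNotSurjective}. Your explicit verification of how the data $A_e$, $C_i$, $B_i$ are chosen (with the single nontrivial insertion at the junction of $\eta_0$ and $\eta_1$) is a correct unpacking of what the paper leaves implicit.
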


\begin{corollary}\label{DegreeYYbar}
 The degree of $\YYbar=(\overline{Y}_0,\ldots,\overline{Y}_k)\colon \MMbar_k \to \lieGroup^{k+1}$ is $0$.
\begin{proof}
We have $\MMbar_k = \MMbar_0 \times \lieGroup^k$. 
Now, by \cref{DegreeYY0bar}, the map $\YYbar\colon \MMbar_0\times \lieGroup^k\to \lieGroup^{k+1}$ has degree $0$
since it is the product of the map $\overline{Y}_0\colon \MMbar_0\to \lieGroup$
and the maps $\overline{Y}_i\colon \lieGroup\to \lieGroup$
given by $\tau_{\eta'_i}\mapsto \tau^{-1}_{\eta'_i}$ for $i=1,\ldots,k$.
\end{proof}

 \end{corollary}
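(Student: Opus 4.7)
The plan is to exploit the product structure of $\MMbar_k$ to reduce to the degree-zero result already proved for the base case. Recall that at the beginning of the section on the dimension of $\MMbar_k$ it is noted that $\MM_k = \MM_0\times \lieGroup^k$ and $\MMbar_k = \MMbar_0 \times \lieGroup^k$, where the extra $\lieGroup$ factors come from the free orbits of $1$-cells $\eta'_1,\ldots,\eta'_k$ (whose stabilizers are trivial, so $\centralizer_\lieGroup(\rhoG(G_{\eta'_i}))=\lieGroup$ and these factors are acted on trivially by $\HH$).

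First, I would identify the components of $\YYbar$ under this product decomposition. By definition $y_0 = x_{\onlyEdgeInEMinusT}$ depends only on the $\MM_0$ coordinates (the factors indexed by $e\in E\setminus\{\eta'_1,\ldots,\eta'_k\}$), so the induced map $\overline{Y}_0$ factors as the composition of projection $\MMbar_k\to\MMbar_0$ with the map $\overline{Y}_0\colon \MMbar_0\to\lieGroup$ of Section~\ref{sectionDegree}. For $i=1,\ldots,k$, the word $y_i=x_{\eta'_i}$ is a free generator, so by the formula of \cref{constructionModuli} together with $g_{\eta'_i}=1$ and $v_0\xrightarrow{\eta'_i}v_0$ (so that $s(\eta'_i)=w(\eta'_i)=v_0$ and $\tau_{v_0}=\identityLieGroup$), we have $\rho_\tau(x_{\eta'_i})=\tau_{\eta'_i}^{-1}$; this descends to the quotient as the inversion map $\lieGroup\to\lieGroup$ on the $i$-th $\lieGroup$ factor, composed with projection.

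Consequently $\YYbar$ is literally a Cartesian product:
\begin{equation*}
\YYbar \;=\; \overline{Y}_0 \,\times\, \iota \,\times\, \cdots \,\times\, \iota \colon \MMbar_0\times\lieGroup^k \;\longrightarrow\; \lieGroup\times\lieGroup^k,
\end{equation*}
where $\iota\colon \lieGroup\to\lieGroup$ denotes inversion. By the multiplicativity property of degree recalled at the start of Section~\ref{sectionDegree} (namely $\deg(f\times g)=\deg(f)\deg(g)$ for smooth maps between oriented manifolds of matching dimensions), and using \cref{propositionDimensionMM0} to ensure the dimensions match, we obtain $\deg(\YYbar)=\deg(\overline{Y}_0)\cdot \deg(\iota)^k$. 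Since $\deg(\overline{Y}_0)=0$ by \cref{DegreeYY0bar}, the product vanishes, regardless of the sign of $\deg(\iota)$.

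The only subtlety — which is really bookkeeping rather than an obstacle — is to check that the $\HH$-action on $\MM_k$ restricts to the trivial action on the $\lieGroup^k$ factor (so that passing to the quotient genuinely gives $\MMbar_0\times \lieGroup^k$ with the claimed factorization of $\YYbar$). This follows from the formula in \cref{quotientModuli}(i) applied to $e=\eta'_i$: since $\eta'_i$ is a loop at $v_0$ with $g_{\eta'_i}=1$ and $s(\eta'_i)=w(\eta'_i)=v_0$, the action sends $\tau_{\eta'_i}$ to $\alpha_{v_0}^{-1}\tau_{\eta'_i}\alpha_{v_0}=\tau_{\eta'_i}$ because $\alpha_{v_0}=\identityLieGroup$ by definition of $\HH$. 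With this verification, the multiplicativity argument goes through verbatim and yields the corollary.
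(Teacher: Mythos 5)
Your proof is correct and follows essentially the same route as the paper: decompose $\MMbar_k=\MMbar_0\times\lieGroup^k$, observe that $\YYbar$ is a product map, and invoke multiplicativity of the degree together with \cref{DegreeYY0bar}. Your added checks are sound and in fact slightly more careful than the paper's wording — the maps on the $\lieGroup$ factors are inversions $\tau_{\eta'_i}\mapsto\tau_{\eta'_i}^{-1}$ rather than identities, but since these are diffeomorphisms of degree $\pm 1$ the conclusion $\deg(\YYbar)=0$ is unaffected.
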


\begin{proposition}\label{DegreeComputation}
 Let $w_0,\ldots, w_k\in \Gamma_k$ and let $\WWbar=(\overline{W}_0,\ldots,\overline{W}_k)\colon \MMbar_k \to \lieGroup^{k+1}$.
 Then $\deg(\WW )=0\in\Z$.
\begin{proof}
  First note that, by \cref{lemmaRemovingConstantsGeneralization} (and \cref{centralizersAreConnectedInUnitaryGroup}), we only need to address the case when the $w_i$ are words in the generators $y_0,\ldots, y_k$.
  Now consider the map $\YYbar=(\overline{Y}_0,\ldots,\overline{Y}_k)$ and consider the map $\wt{\WW}\colon \lieGroup^{k+1}\to \lieGroup^{k+1}$ induced by the words $w_0,\ldots,w_k\in F(y_0,\ldots, y_k)$, which makes the following diagram commute
  \begin{center}
  \begin{tikzcd}
   \MMbar_k \arrow{r}{\YYbar} \arrow{dr}[swap]{\WWbar} & \lieGroup^{k+1}\arrow{d}{\wt{\WW}} \\
   & \lieGroup^{k+1}
  \end{tikzcd}
\end{center}
By  \cref{DegreeYYbar} $\YYbar$ has degree $0$ and since $\deg(\WWbar)=\deg(\wt{\WW})\cdot\deg(\YYbar)$ we are done.
\end{proof}
\end{proposition}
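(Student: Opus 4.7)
My plan is to reduce the computation to the map $\YYbar$ of \cref{DegreeYYbar} by factoring $\WWbar$ through it and then using multiplicativity of degree under composition.

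First I would use \cref{lemmaRemovingConstantsGeneralization} to replace each $w_i$ by a word $w'_i$ in the ``free'' generators $y_0,\ldots,y_k$ only, i.e.\ by eliminating all occurrences of the vertex-stabilizer generators $\inclusionStabilizerVertex_v(g)$. More precisely, since we work in $\lieGroup=\U(m)$ the centralizers $\centralizer_\lieGroup(\centralizer_\lieGroup(\rhoG(G_v)))$ are connected by \cref{centralizersAreConnectedInUnitaryGroup}, so \cref{lemmaRemovingConstantsGeneralization} produces $\HH$-equivariant homotopies on $\MM_k$ that descend to homotopies on $\MMbar_k$ between $\overline{W}_i$ and the map induced by $w'_i$. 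Since homotopic maps of oriented manifolds of the same dimension have the same degree, this reduces us to the case where each $w_i$ lies in the free subgroup $F(y_0,\ldots,y_k)\subseteq \Gamma_k$.

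Next, once each $w_i$ is a word in the symbols $y_0,\ldots,y_k$, the same formal word defines a self-map of $\lieGroup^{k+1}$ through the Lie-group operations: write $w_i$ as a product of letters $y_{j_s}^{\varepsilon_s}$ and evaluate it on $(g_0,\ldots,g_k)\in\lieGroup^{k+1}$ by replacing each $y_{j_s}^{\varepsilon_s}$ with $g_{j_s}^{\varepsilon_s}$. This yields a smooth map $\wt{\WW}\colon \lieGroup^{k+1}\to \lieGroup^{k+1}$ such that, by the very definition of $\rho_\tau(y_j) = \overline{Y}_j(\tau)$, the diagram
\begin{center}
\begin{tikzcd}
\MMbar_k \arrow{r}{\YYbar} \arrow{dr}[swap]{\WWbar} & \lieGroup^{k+1}\arrow{d}{\wt{\WW}} \\
& \lieGroup^{k+1}
\end{tikzcd}
\end{center}
commutes.

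Finally, by multiplicativity of degree under composition we have $\deg(\WWbar)=\deg(\wt{\WW})\cdot \deg(\YYbar)$, and by \cref{DegreeYYbar} the second factor vanishes, so $\deg(\WWbar)=0$. The only delicate point in the whole argument is the first step: one must verify that the hypothesis of \cref{lemmaRemovingConstantsGeneralization} (connectedness of the double centralizers) applies uniformly for every vertex $v\in V$, which is automatic here since $\lieGroup=\U(m)$. Everything else is formal given the structural results already established for $\MMbar_k$ and $\YYbar$.
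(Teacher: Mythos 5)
Your proof is correct and follows essentially the same route as the paper: reduce via \cref{lemmaRemovingConstantsGeneralization} (using \cref{centralizersAreConnectedInUnitaryGroup} for connectedness of the double centralizers in $\U(m)$) to words in $y_0,\ldots,y_k$, factor $\WWbar=\wt{\WW}\circ\YYbar$, and conclude by multiplicativity of the degree together with \cref{DegreeYYbar}.
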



\section{Group actions on contractible $2$-complexes}

We are now ready to prove the main results of this article.

\begin{theorem}\label{goodRepresentation}
 Let $G$ be one of the groups in \cref{thmB}.
 Let $w_0,\ldots, w_k\in N$. If $N=\llangle w_0,\ldots, w_k\rrangle^{\Gamma_k}[N,N]$ then there is a point $\taubar\in \MMbar_k$ such that
 
 (i) $\rhobar_\taubar(w_i)=1$ for $i=0,\ldots, k$, and
 
 (ii) $\rhobar_\taubar$ is not universal.
 \begin{proof}
 By \cref{DegreeComputation} the degree of $\WWbar$ is $0$.
 By \cref{universalRepIsRegularPoint}, $\identityMMBar$ is a regular point of $\WWbar$.
 Therefore, there must exist a point $\taubar\in \WWbar^{-1}(\identityLieGroup)$ with $\taubar\neq\identityMMBar$.
 To conclude note that by \cref{universalIsUnique}, $\taubar$ is not universal.
 \end{proof}
\end{theorem}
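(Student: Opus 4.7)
The plan is to combine the three big inputs that have been assembled in the preceding sections into a short topological argument. First, I would observe that $\WWbar\colon \MMbar_k \to \lieGroup^{k+1}$ is a smooth map between connected, orientable manifolds of the same dimension (connectedness/orientability by \cref{connectednessOfMAndMbar} and matching dimension by \cref{propositionDimensionMM0}), and that $\MMbar_k$ is compact since $\lieGroup=\U(m)$ is compact and $\MM_k$ is a closed subset of a finite product of copies of $\lieGroup$. Also, because every $w_i$ lies in $N=\ker\phi$ and $\rhobar_{\identityMMBar}=\rhoG\circ\phi$ is universal, the identity point satisfies $\WWbar(\identityMMBar)=\identityLieGroup$, so the preimage in question is nonempty.

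Next, the hypothesis $N=\llangle w_0,\ldots,w_k\rrangle^{\Gamma_k}[N,N]$ is precisely what is needed to invoke \cref{universalRepIsRegularPoint}, which tells us $\identityMMBar$ is a regular point of $\WWbar$. By the inverse function theorem $\WWbar$ is a local diffeomorphism near $\identityMMBar$, contributing $\sg\, d_{\identityMMBar}\WWbar=\pm 1$ to the algebraic count of preimages of any regular value sufficiently close to $\identityLieGroup$. If $\identityMMBar$ were the unique point of $\WWbar^{-1}(\identityLieGroup)$, then by compactness I could isolate a neighborhood $U$ of $\identityMMBar$ whose image exhausts all preimages in some open neighborhood of $\identityLieGroup$; picking a regular value inside this neighborhood (possible by Sard) would then force $\deg(\WWbar)=\pm 1$, contradicting \cref{DegreeComputation}. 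Hence there must exist $\taubar\in \MMbar_k$ with $\taubar\neq\identityMMBar$ and $\WWbar(\taubar)=\identityLieGroup$, giving $\rhobar_\taubar(w_i)=1$ for all $i$, which is statement (i).

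For (ii), I would just appeal to \cref{universalIsUnique}: its hypotheses hold because, in each case in \cref{thmB}, the stabilizer $G_{v_0}$ is the Borel subgroup and $\rhoG|_{G_{v_0}}$ is irreducible by \cref{descriptionCentralizersPowersOfTwo,descriptionCentralizersThreeModFour,descriptionCentralizersSuzuki}, while the fact that every element of $G$ fixes a vertex of $\XOSk(G)$ is built into the Oliver--Segev construction (the isotropy subgroups exhaust the maximal solvable subgroups covering $G$). Thus $\identityMMBar$ is the only universal point, and our $\taubar$ is not universal.

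The honest difficulty in this theorem has already been paid for in the preceding sections: the whole point of \cref{sectionDifferentialX} and \cref{sectionDegree} is to make exactly these two inputs (regularity at $\identityMMBar$ and vanishing degree) available. Once they are in hand, the theorem is a three-line topological deduction; the only place where I might anticipate friction is confirming that the degree formula applies despite $\identityLieGroup$ not being a priori a regular value, and this is handled by the compactness-plus-perturbation argument sketched above.
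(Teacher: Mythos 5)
Your proposal is correct and follows essentially the same route as the paper: degree zero from \cref{DegreeComputation}, regularity of $\identityMMBar$ from \cref{universalRepIsRegularPoint} to force a second preimage of $\identityLieGroup$, and \cref{universalIsUnique} to conclude non-universality. The only difference is that you spell out the compactness-plus-perturbation argument (isolating $\identityMMBar$ and choosing a nearby regular value) that the paper leaves implicit in the step ``therefore there must exist a point $\taubar\neq\identityMMBar$,'' and this is exactly the right way to justify it since $\identityLieGroup$ is not known to be a regular value.
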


\begin{proof}[Proof of \cref{thmB}]
  By \cref{Refinement} it is enough to prove the result when $X$ is obtained from $\XOSk(G)$ by attaching $k+1$ free orbits of $2$-cells.
  By \cref{Brown}, there are words $w_0,\ldots,w_k\in N$ such that $\pi_1(X)\groupIso \frac{N}{\llangle w_0,\ldots, w_k\rrangle^{\Gamma_k}}$ and since $H_1(X)=0$ we have $N=\llangle w_0,\ldots,w_k\rrangle^{\Gamma_k}[N,N]$.
  Now passing to the quotient the representation $\rhobar_{\taubar}$ given by \cref{goodRepresentation} we obtain a nontrivial representation $\pi_1(X)\to \U(m)$.
 \end{proof}

 Recall the following basic result from the theory of equations over groups.

 \begin{proposition}[{\cite[Proposition 2.3 (i)]{HowieEquations}}]\label{propSystemEquations}
   Let $X$ be a finite acyclic $2$-complex and let $A\subset X$ be an acyclic subcomplex.
   Then we can write
   \[\pi_1(X)=(\pi_1(A)*F(x_1,\ldots,x_n))/\llangle w_1,\ldots, w_n\rrangle\]
   and the $(n\times n)$-matrix $M$ such that $M_{i,j}$ is the total exponent of $x_j$ in $w_i$ is invertible.
 \end{proposition}
 
 The Gerstenhaber--Rothaus theorem~\cite{GerstenhaberRothaus} has the following immediate consequence.
 
 \begin{proposition}\label{rephrasingeOfGerstenhaberRothaus}
   Let $X$ be a finite acyclic $2$-complex, $A\subseteq X$ an acyclic subcomplex and  $\rho\colon \pi_1(A)\to \lieGroup$ a nontrivial representation into a compact and connected Lie group $\lieGroup$.
   Then there is a nontrivial representation $\overline{\rho}\colon \pi_1(X)\to \lieGroup$ such that $\overline{\rho}\circ \pi_1(i)=\rho$, where $i\colon A\to X$ denotes the inclusion.
   \begin{proof}
       We write $\pi_1(X)=\pi_1(A) * F(x_1,\ldots, x_n) / \llangle w_1,\ldots, w_n\rrangle$ using \cref{propSystemEquations}.
       There is an induced map
       $\rho \colon \pi_1(A) * F(x_1,\ldots, x_n) \to \lieGroup * F(x_1,\ldots, x_n)$.
       Now~\cite[Theorem 1]{GerstenhaberRothaus} gives elements $x_1,\ldots, x_n\in \lieGroup$ satisfying the equations $\rho(w_1),\ldots, \rho(w_n)$ and the desired representation is obtained by passing to the quotient.
   \end{proof}
 \end{proposition} 

\begin{proof}[{Proof of \cref{CasacubertaDicks}}]
  Let $G$ be a finite group and suppose that $X$ is a finite, acyclic $2$-dimensional fixed point free $G$-complex.
  Let $N$ be the subgroup generated by all normal subgroups $N'\triangleleft G$ such that $X^{N'}\neq \emptyset$.
  By \cref{OSThmB} we have that $Y=X^N$ is acyclic and the action of $K=G/N$ on $Y$ is essential and fixed point free.
  Then $K$ must be one of the groups in \cref{OSThmA}.
  Since \cref{thmB,thmC} together cover all the groups in \cref{OSThmA},
  it follows that $\pi_1(Y)$ admits a nontrivial unitary representation.
  Therefore, by \cref{rephrasingeOfGerstenhaberRothaus}, $\pi_1(X)$ also admits a nontrivial unitary representation.
\end{proof}


\bibliographystyle{alpha}
\bibliography{references}

\end{document}